  \theoremstyle{plain}
  \newtheorem{Theorem}{Theorem}[section]
  \newtheorem{Lemma}{Lemma}[section]
  \newtheorem{Corollary}{Corollary}[section]
    \theoremstyle{remark}
  \newtheorem{remark}{Remark}
  \numberwithin{equation}{section}
  \numberwithin{figure}{section}
  \numberwithin{remark}{section}
\renewcommand{\baselinestretch}{1.00}
\begin{document}

\title{Oblique boundary value problems for augmented Hessian equations I}

\author{Feida Jiang}
\address{College of Mathematics and Statistics, Nanjing University of Information Science and Technology, Nanjing 210044, P.R. China; 
Yau Mathematical Sciences Center, Tsinghua University, Beijing 100084, P.R. China}
\email{jfd2001@163.com}

\author{Neil S. Trudinger}
\address{Centre for Mathematics and Its Applications, The Australian National University, Canberra ACT 0200, Australia;
School of Mathematics and Applied Statistics, University of Wollongong, Wollongong, NSW 2522, Australia}
\email{Neil.Trudinger@anu.edu.au}

\thanks{Research supported by  National Natural Science Foundation of China (Nos.11771214,11401306),  Australian Research Council (No.DP1094303), China Postdoctoral Science Foundation (No.2015M571010) and Jiangsu Natural Science Foundation of China (No.BK20140126).}

\subjclass[2000]{35J60 35J25}

%\date{\today}

\keywords{Oblique boundary value problems, augmented Hessian equations, second derivative estimates, gradient estimates}

\abstract {In this paper, we study  global regularity for oblique boundary value problems of augmented Hessian equations for a class of general operators. By assuming a natural convexity condition of the domain together with appropriate convexity conditions on the matrix function in the augmented Hessian, we develop a global theory for classical elliptic solutions by establishing global {\it a priori} derivative estimates up to second order. Besides the known applications for Monge-Amp\`ere type operators in optimal transportation and geometric optics, the general theory here embraces prescribed mean curvature problems in conformal geometry  as well as oblique boundary value problems for augmented $k$-Hessian, Hessian quotient equations and certain degenerate equations.}

\endabstract

\maketitle

%\footnote { }

\baselineskip=12.8pt
\parskip=3pt
\renewcommand{\baselinestretch}{1.38}

\section{Introduction}\label{Section 1}
\vskip10pt

In this paper we develop the essentials of a general theory of classical solutions of oblique boundary value problems for certain types of fully nonlinear elliptic partial differential equations, which we describe as augmented Hessian equations. Such problems arise in various applications, notably to optimal transportation, geometric optics and conformal geometry  and our critical  domain and augmenting matrix convexity notions  are adapted from those introduced in \cite{MTW2005, Tru2006, TruWang2009} for regularity in optimal transportation. Our main concern here will be with semilinear boundary conditions but we will also cover the  nonlinear case for appropriate subclasses of our general operators. The classical solvability of the Neumann problem for the Monge-Amp\`ere equation was proved by Lions, Trudinger and Urbas in \cite {LTU1986}. Not only was the approach in \cite{LTU1986} special for the Neumann problem, but it follows from the fundamental example of Pogorelov \cite{Pog} that the result cannot be extended to general linear oblique boundary value problems, \cite{Wang1992, Urbas1995}.  On the other hand, the classical Dirichlet problem for basic Hessian equations has been well studied in the wake of fundamental papers by Caffarelli, Nirenberg and Spruck \cite {CNS-Hessian} and Ivochkina \cite{Ivo}, with further key developments  by several authors, including Krylov  in \cite {Kry1995} and related papers and Trudinger in  \cite {Tru1995}; (see also \cite {Guan2014} for a recent account of the resultant theory under fairly general conditions).

Our main concerns in this paper are second derivative estimates under natural ``strict regularity'' conditions on the augmenting matrices, together with accompanying gradient and H\"older estimates, which then lead to classical existence theorems. Our theory embraces a wide class of examples which we also present as well as a key  application to semilinear Neumann problems arising in conformal geometry, where remarkably our adaptation of optimal transportation domain convexity from \cite{Tru2006,TruWang2009} enables us to remove the rather strong umbilic boundary condition for second derivative bounds, assumed in previous work \cite{SSChen2007, JLL2007}. In ensuing papers we consider extensions to weaker matrix convexity conditions as well as the regularity of  weak solutions and the sharpness of our domain convexity conditions. Extensions to the Dirichlet problem for our general class of equations are  treated in \cite{JT-new}. Overall this paper provides a comprehensive framework for studying oblique boundary value problems for a large class of fully nonlinear equations, which embraces the Monge-Amp\`ere type case in Section 4 in \cite{JTX2015} as a special example.

Specifically we study augmented Hessian partial differential equations of the form,
\begin{equation}\label{1.1}
\mathcal{F}[u]:= F[D^2u-A(\cdot,u,Du)]=B(\cdot,u,Du), \quad \mbox{in} \ \Omega,
\end{equation}
subject to boundary conditions
\begin{equation}\label{1.2}
\mathcal{G}[u]:=G(\cdot,u,Du)=0, \quad \mbox{on} \ \partial\Omega,
\end{equation}
where $\Omega$ is a bounded domain in $n$ dimensional Euclidean space $\mathbb{R}^n$ with smooth boundary, $Du$ and $D^2u$ denote the gradient vector and the Hessian matrix of the solution $u\in C^2(\Omega)$, $A$ is a $n\times n$ symmetric matrix function defined on $\Omega \times \mathbb{R} \times \mathbb{R}^n$, $B$ is a  scalar valued function on $\Omega \times \mathbb{R} \times \mathbb{R}^n$ and $G$ is a scalar valued function defined on $\partial \Omega \times \mathbb{R} \times \mathbb{R}^n$. We use $x$, $z$, $p$, $r$ to denote the points in $\Omega$, $\mathbb{R}$, $\mathbb{R}^n$ and $\mathbb{R}^{n\times n}$ respectively.  The boundary condition \eqref{1.2} is said to be oblique, with respect to $u\in C^1(\bar \Omega)$, if
\begin{equation}\label{1.3}
G_p(\cdot, u,Du)\cdot \nu \ge \beta_0, \quad \mbox{on} \ \partial\Omega,
\end{equation}
where $\nu$ is the unit inner normal vector field on $\partial \Omega$ and $\beta_0$ is a positive constant.
If $G_p\cdot \nu > 0$ on all of $\partial \Omega \times \mathbb{R} \times \mathbb{R}^n$, we will simply refer to $G$
(or $\mathcal G$) as oblique. In the context, we shall use either $\mathcal{F}$ or $F$ to denote the general operator in \eqref{1.1}, and either $\mathcal{G}$ or $G$ to denote the boundary operator in \eqref{1.2}.

 Letting $\mathbb{S}^n$ denote the linear space of $n\times n$ symmetric matrices, the function $F$ in \eqref{1.1}  is defined on  an open, convex cone  $\Gamma$ in $\mathbb{S}^n$, with vertex at $0$, containing the positive cone $K^+$.  In order to consider $F$ in a very general setting, we assume that $F\in C^2 (\Gamma)$ satisfies a subset of the following properties.

\begin{itemize}
\item[{\bf F1}:]
$F$ is strictly increasing in $\Gamma$, namely
\begin{equation}\label{F1 inequality}
F_r := F_{r_{ij}} = \left \{ \frac{\partial F}{\partial r_{ij}} \right \} >0, \ {\rm in} \ \Gamma.
\end{equation}

\vspace{0.2cm}

\item[{\bf F2}:]
$F$ is concave in $\Gamma$, namely
\begin{equation}\label{F2 inequality}
\frac{\partial^2 F}{\partial r_{ij}\partial r_{kl}} \eta_{ij} \eta_{kl} \le 0, \ {\rm in} \ \Gamma,
\end{equation}
for all symmetric matrices $\{\eta_{ij}\}\in \mathbb{S}^n$.

\vspace{0.2cm}

\item[{\bf F3}:]
$F(\Gamma)=(a_0, \infty)$ for a constant $a_0\ge -\infty$ with
\begin{equation}
\sup_{r_0\in \partial\Gamma}\limsup_{r\rightarrow r_0} F(r) \le a_0.
\end{equation}

\vspace{0.2cm}

\item[{\bf F4}:]
$F(tr)\rightarrow \infty$ as $t\rightarrow \infty$, for all $r\in \Gamma$.

\vspace{0.2cm}

\item[{\bf F5}:]
For given constants $a$, $b$ satisfying $a_0<a<b$, there exists a constant $\delta_0>0$ such that $\mathscr{T}(r): ={\rm {trace}}(F_r)\ge \delta_0$ if $a< F(r)< b$.

\vspace{0.2cm}

\item[{\bf F5\textsuperscript{+}}:]
$\mathscr{T}(r) \rightarrow \infty$ uniformly for $a \le F(r) \le b$ as $|r|\rightarrow \infty$.
\end{itemize}

We say an operator $\mathcal{F}$ satisfies the above properties, if the corresponding function $F$ satisfies them.
Note that we can take the constant $a_0$ in F2 and F5 to be $0$ or $-\infty$. When $F$ is given as a symmetric function $f$ of the eigenvalues $\lambda_1, \cdots,\lambda_n$ of the matrix $r$, with $\Gamma$ closed under orthogonal transformations, we will refer to $\mathcal F$ as orthogonally invariant. In this case the above conditions are modelled on the conditions introduced for the study of the Dirichlet problem for the basic Hessian equations, with $A=0$, by Caffarelli, Nirenberg and Spruck in \cite{CNS-Hessian}  and Trudinger in \cite{Tru1995}. The standard operators satisfying the above properties are the $k$-Hessian operators, $F_k=(S_k)^{\frac{1}{k}}$, $k=2,\cdots,n$, which satisfy F1-F4, F5\textsuperscript{+} on $\Gamma_k$ with $a_0=0$, and their quotients $F_{k,l}=(\frac{S_k}{S_l})^{\frac{1}{k-l}}$, $n\ge k>l\ge 1$, which satisfy F1-F5 on $\Gamma_k$ with $a_0=0$, where $S_k$ denotes the $k$-th order elementary symmetric function defined by
    \begin{equation}\label{k-Hessian operator}
        S_k[r]:=S_k(\lambda(r))=\sum_{1\leq i_1<\cdots<i_k\leq n}\lambda_{i_1}\cdots \lambda_{i_k}, \ \ k=1,\cdots,n,
    \end{equation}
and  $\Gamma_k$ denotes the cone defined by
\begin{equation}
\Gamma_k = \{r \in \mathbb{S}^n\ | \ S_j[r]>0, \ \forall j=1,\cdots,k \}.
\end{equation}
As usual we set $F_0 = 1$, so that we can also write the standard $k$-Hessian $F_k$ as the quotient $F_{k,0}$. It turns out that the proofs of our results and their underlying ideas are essentially just as complicated for these special cases as for the general situation so that a reader will not miss the main features of our techniques by restricting attention to them.
More generally when $a_0=0$ and $F$ is positive homogeneous of degree one, then properties F1, F2, F3 imply F4 and F5.
Clearly F4 is obvious and to show F5 we have by the concavity F2, for a positive constant $\mu$ and $r\in \Gamma$,
\begin{equation}\label{1.9}
\mathscr T(r)\ge \frac{1}{\mu} \{F(\mu I) - F(r) +r\cdot F_r\} = F(I) > 0,
\end{equation}
where the equality follows from the homogeneity, which implies $r\cdot F_r = F(r)$. (Clearly it is enough to take $\mu = 1$ here but it is convenient to use a general $\mu$ for later use).

We also note in general that  F2 and F3 imply, for $r\in \Gamma$ and finite $a_0$,
\begin{equation}\label{homogeneity}
0 \le r\cdot F_r \le F(r) - a_0.
\end{equation}
By the concavity F2, we have, for $t>0$,
$$ F(tr) - F(r) \le (t-1)r\cdot F_r,$$
from which \eqref {homogeneity} follows by taking $t$ sufficiently large for the first inequality and sufficiently small for the second. If $a_0 \ge -\infty$, then \eqref{homogeneity} clearly holds if also F4 is satisfied, (or more generally
$\liminf F(tr) > -\infty$ as $t\rightarrow \infty$).
From \eqref{1.9} and \eqref{homogeneity}, we then obtain for $F(r) \le b$,
\begin{equation}\label{1.11}
\mathscr T(r)\ge \frac{1}{\mu} \{F(\mu I) - b\} \ge \delta_0>0,
\end{equation}
for some constant $\delta_0$, depending on $F$ and $b$, by taking $\mu$ sufficiently large, so that condition F5 is itself a consequence of F2 and F4.

As for \eqref{homogeneity} and \eqref{1.11}, the condition F4 is typically more than we need in general and can be dispensed with in most of our estimates. When considering the equation \eqref{1.1}, it will be enough to assume instead $F(tr) >  B(\cdot,u,Du)$ for $r\in \Gamma$ and sufficiently large $t$, (depending on $r$).

In our scenario, we call $M[u]:=D^2u-A(\cdot,u,Du)$ the augmented Hessian matrix.
 Usually, we denote the elements of  $M[u]$ and the matrix $F_r$ in F1 by $w_{ij}=D_{ij}u-A_{ij}$ and $F^{ij}$ respectively. A function $u$ is called admissible in $\Omega$ ($\bar \Omega$) if
\begin{equation}\label{admissible condition}
M[u]\in \Gamma, \quad {\rm in} \ \Omega, \ (\bar \Omega),
\end{equation}
so that the operator $\mathcal{F}$ satisfying F1 is elliptic with respect to $u$ in $\Omega$ ($\bar \Omega$) when \eqref{admissible condition} holds. It is also clear that if $M[u] \in \bar\Gamma$ with $B \in F(\Gamma)$ in $\Omega$ ($\bar \Omega$) then \eqref{1.1} is elliptic with respect to $u$ in $\Omega$ ($\bar \Omega$), namely we require $B>a_0$ in $\Omega$ ($\bar \Omega$) for $F$ satisfying F3.

An important ingredient for regularity of solutions to equations involving the augmented matrix $M[u]$ is the co-dimension one convexity (strict convexity) condition on the matrix $A$ with respect to $p$, that is
\begin{equation}\label{1.8}
A_{ij}^{kl}(x,z,p)\xi_i\xi_j\eta_k\eta_l \geq 0, \ (>0 ),
\end{equation}
for all $(x,z,p)\in\Omega\times \mathbb{R}\times\mathbb{R}^n$,
$\xi,\eta\in\mathbb{R}^n$, $\xi\perp\eta$,
where $A_{ij}^{kl}=D^2_{p_kp_l}A_{ij}$. Note that  we use the standard summation convention throughout this paper that repeated indices indicate summation from $1$ to $n$ unless otherwise specified. As in \cite{Tru2006, JTY2013, JTY2014}, we also call the matrix $A$ regular (strictly regular) if $A$ satisfies \eqref{1.8}. These conditions were originally formulated for optimal transportation
problems in the Monge-Amp\`ere case, $k = n$, in \cite{MTW2005} and \cite{TruWang2009}. The strictly regular condition may also be viewed as a supplementary ellipticity.

We now start to formulate the main theorems in this paper. First we state a local/global second derivative estimate which extends the Monge-Amp\`ere case in \cite{MTW2005} and whose global version is needed for our treatment of the boundary condition \eqref{1.2}.

\begin{Theorem}\label{Th1.1}
Let $u\in C^4(\Omega)$ be an admissible solution of equation \eqref{1.1} in $\Omega$. Assume one of the following conditions:
\begin{itemize}
\item[(i):] F1, F2, F3 and F5\textsuperscript{+} hold;
\item[(ii):] F1, F2, F3, F5 hold, and $B$ is convex with respect to $p$.
\end{itemize}
Assume also $A\in C^2(\bar \Omega\times \mathbb{R}\times \mathbb{R}^n)$ is strictly regular, $B>a_0, \in C^2(\bar \Omega\times \mathbb{R}\times \mathbb{R}^n)$. Then for any domains
$\Omega^\prime \subset\subset\Omega_0$ in $\mathbb{R}^n$, we have the estimate
\begin{equation}\label{local global estimate}
\sup_{\Omega\cap\Omega^\prime} |D^2u|\le \sup_{\partial\Omega\cap\Omega_0}|D^2u|+C,
\end{equation}
for $u \in C^2(\bar\Omega\cap\Omega_0)$, where the constant $C$ depends on $n, A, B,F,\Gamma, \Omega\cap\Omega_0, \Omega\cap\Omega^\prime$
 and $|u|_{1;\Omega}$.
\end{Theorem}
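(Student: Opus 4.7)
The plan is to construct an auxiliary function built from the pure second derivative of the matrix $w_{ij}=D_{ij}u-A_{ij}(x,u,Du)$ in a variable direction, coupled with a cutoff supported in $\Omega_0$ and a lower-order corrector. Specifically, for each unit vector $\xi$ and a cutoff $\eta\in C^2_0(\Omega_0)$ with $\eta\equiv 1$ on $\Omega'$, I would consider
\begin{equation*}
v(x,\xi)=\log w_{\xi\xi}(x)+\psi(x,u(x),Du(x))+\kappa\log\eta(x),
\end{equation*}
where $\psi$ is a corrector of the form $\tau|Du|^2/2+\sigma u$ for suitably large constants $\tau,\sigma$, designed to absorb lower-order terms, and $\kappa$ is a large constant forcing the maximum to be either interior in $\Omega_0$ or on $\partial\Omega\cap\Omega_0$. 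A boundary maximum on $\partial\Omega\cap\Omega_0$ immediately delivers the right-hand side of \eqref{local global estimate}; otherwise I would argue at an interior maximum $x_0$ and reduce to $\xi=e_1$ after rotating coordinates (handling the eigenvalue perturbation in the usual way when $w_{11}$ is not a simple eigenvalue of $w_{ij}$).

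At such an interior maximum, I would diagonalise $F_r$ at $x_0$ so that $F^{ij}=F^{ii}\delta^{ij}$ and differentiate \eqref{1.1} twice in the direction $e_1$. Using the commutation $u_{ij11}=u_{11ij}$ and discarding the non-positive term $F^{ij,kl}D_1 w_{ij}D_1 w_{kl}$ via the concavity F2 yields the basic inequality
\begin{equation*}
F^{ij}D_{ij}w_{11}\ge D_{11}B+F^{ij}\bigl(D_{11}A_{ij}-D_{ij}A_{11}\bigr).
\end{equation*}
Expanding the total derivatives of $A$ isolates the key positive term $F^{ij}A_{ij}^{kl}u_{k1}u_{l1}$ coming from the $p$-Hessian of $A$. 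Splitting $u_{\cdot 1}=u_{i1}e_i+(u_{\cdot 1})^\perp$ against each diagonal index $i$ and invoking the strict regularity \eqref{1.8} on the perpendicular part gives a lower bound of the form $c_0\mathscr{T}|u_{\cdot 1}|^2-C\sum_i F^{ii}u_{i1}^2$, which, since $u_{11}\sim w_{11}$ and $|u_{\cdot 1}|\ge|u_{11}|$, dominates a positive multiple of $\mathscr{T}w_{11}^2$ for $w_{11}$ large. The remaining terms in the expansion are either of size $O(\mathscr{T}(1+w_{11}))$ with bounded coefficients, or involve third derivatives of $u$: those of the form $F^{ij}(D_{p_k}A_{ij})u_{k11}$ are controlled by the identity $u_{k11}=D_k w_{11}+D_k A_{11}$ together with the first-order critical point condition $D_k w_{11}=-w_{11}\bigl(D_k\psi+\kappa D_k\eta/\eta\bigr)$, while terms of the form $(D_{p_k}A_{11})F^{ij}u_{ijk}$ are controlled by differentiating \eqref{1.1} once to express $F^{ij}u_{ijk}$ in terms of $D_k B$ and of quantities already under control.

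Assembling the inequality $F^{ij}D_{ij}v\le 0$ at the maximum and dividing through by $w_{11}$ should leave the strict-regularity contribution $c_0\mathscr{T}w_{11}$ dominating, modulo terms that grow either like $\mathscr{T}$ or like $1$. In case (i), the blow-up $\mathscr{T}\to\infty$ supplied by F5$^+$ absorbs the potentially dangerous term $B^{kl}u_{k1}u_{l1}$ arising in the expansion of $D_{11}B$; in case (ii), the convexity of $B$ in $p$ places this very term on the good side of the inequality, and the uniform lower bound $\mathscr{T}\ge\delta_0$ from F5 then suffices for the remaining absorption. The main obstacle, and the most delicate step, will be the book-keeping of the third-derivative contributions together with the calibration of the constants $\tau,\sigma,\kappa$ in the corrector $\psi$, so that the quadratic positive term from strict regularity genuinely dominates rather than merely balances the accumulated linear-in-$w_{11}$ terms with coefficients weighted by $\mathscr{T}$.
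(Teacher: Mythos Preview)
Your strategy is a Pogorelov-style variant that can in principle be made to work, but there is a genuine gap in your handling of the strict-regularity term, and the paper's argument is both different and considerably simpler. You correctly obtain the lower bound $F^{ij}A_{ij}^{kl}u_{k1}u_{l1}\ge c_0\mathscr{T}|Du_1|^2 - c_1 F^{ij}u_{i1}u_{j1}$ (this is exactly inequality \eqref{critical inequality} in the paper, and no diagonalisation is needed for it). But your assertion that this quantity alone ``dominates a positive multiple of $\mathscr{T}w_{11}^2$'' is false: since $F^{ij}u_{i1}u_{j1}$ can be as large as $\mathscr{T}|Du_1|^2$, the whole expression is negative whenever $c_1>c_0$, which is the generic case. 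The term $-c_1F^{ij}u_{i1}u_{j1}$ is \emph{not} lower order and must be absorbed by the corrector. In your set-up the absorption does eventually occur, because $\mathcal{L}(\tfrac{\tau}{2}|Du|^2)$ contributes $+\tau F^{ij}u_{ik}u_{jk}\ge \tau F^{ij}u_{i1}u_{j1}$ undivided by $w_{11}$, while the bad term is divided; but this is the actual mechanism, not the one you describe. Moreover, the competing negative term $-F^{ij}D_i(\log w_{11})D_j(\log w_{11})$ coming from the logarithm, once you substitute the first-order condition, produces a piece of size $-\tau^2|Du|^2 F^{ij}u_{ik}u_{jk}$, which forces $\tau$ to be \emph{small} (of order $|Du|^{-2}$), not the ``suitably large'' constant you propose.

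The paper avoids all of this by dispensing with the logarithm entirely. One works directly with $v_1=u_{\tau\tau}+\tfrac{c_1}{2}|u_\tau|^2$ for a \emph{constant} unit vector $\tau$, multiplied by a cutoff $\zeta^2$ for the local version. The corrector $\tfrac{c_1}{2}|u_\tau|^2$, with $c_1$ the \emph{exact} constant from \eqref{strict regularity}, contributes precisely $+c_1 F^{ij}u_{i\tau}u_{j\tau}$ to $\mathcal{L}v_1$ and cancels the bad term on the nose, giving immediately $\mathcal{L}v_1\ge c_0\mathscr{T}|Du_\tau|^2+\lambda_B|Du_\tau|^2-C(1+\mathscr{T})(1+|Du_\tau|)$. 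No diagonalisation of $F_r$ or $w$, no eigenvalue-perturbation argument, and no elimination of third derivatives through the critical-point identity are needed; the upper bound $u_{\tau\tau}\le C$ at an interior maximum follows at once from F5\textsuperscript{+} (case~(i)) or from $\lambda_B\ge 0$ together with F5 (case~(ii)). The lower bound on $D^2u$ is obtained separately from the concavity F2 via $a_0^{ij}w_{ij}\ge F(w)-F(r_0)+a_0^{ij}(r_0)_{ij}$ for any fixed $r_0\in\Gamma$, $a_0^{ij}=F_{r_{ij}}(r_0)$.
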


The estimate \eqref{local global estimate} in Theorem \ref{Th1.1} in the case ${\Omega^\prime = \Omega}$ provides us a global estimate which reduces the bound for second derivatives to the boundary. When $\Omega_0 = \Omega$ we get the usual form of the interior estimate, which is already formulated for case (i) in \cite{Tru2006}. A more precise version involving cut-off functions will be presented in Section \ref{Section 2}. For the boundary estimates we need to assume appropriate geometric assumptions on the domain $\Omega$. We consider the operator $\mathcal{F}$ in \eqref{1.1} and domains $\Omega\subset \mathbb{R}^n$ with $\partial\Omega\in C^2$ and $\nu$ denoting the unit inner normal to $\partial\Omega$, $\delta=D-(\nu\cdot D)\nu$ the tangential gradient in $\partial\Omega$, and $P=I-\nu\otimes\nu$ the projection matrix onto the tangent space on $\partial\Omega$, where $I$ is the $n\times n$ identity matrix. (Here the tangential gradient operator is the vector operator 
$\delta=(\delta_1, \cdots, \delta_n)$ with $\delta_i = (\delta_{ij}-\nu_i\nu_j)D_j$, where $\delta_{ij}$ is the usual Kronecker delta). Then we introduce the $A$-curvature matrix on $\partial \Omega$,
\begin{equation}\label{1.6}
K_A[\partial\Omega](x,z,p)= -\delta\nu(x) + P(D_pA(x,z,p)\cdot \nu(x))P.
\end{equation}
We call $\partial\Omega$ uniformly $(\Gamma, A, G)$-convex with respect to an interval valued function $\mathcal{I}(x)$ on $\partial\Omega$, if
\begin{equation}\label{uniform convexity}
K_A[\partial\Omega](x,z,p) + \mu \nu(x)\otimes\nu(x) \in \Gamma,
\end{equation}
for all $x\in \partial\Omega$, $z\in \mathcal{I}(x)$, $G(x,z,p)\ge 0$ and some $\mu=\mu (x,z,p)>0$.  For a given function $u_0$, if we take $\mathcal{I}=\{u_0\}$ in the above definition, then $\partial\Omega$ is  called uniformly $(\Gamma, A, G)$-convex with respect to $u_0$. For the cases $\Gamma = \Gamma_k$, corresponding to the $k$-Hessians and their quotients, \eqref{uniform convexity} is equivalent to $K_A[\partial\Omega](x,z,p) \in \Gamma_{k-1}$. Moreover in the Monge-Amp\`ere case, $k=n$, we recover our definitions of uniform $(A,G)$-convexity in \cite{JTX2015}, which extend the notion of uniform $c$-convexity with respect to a target domain $\Omega^*$ in the optimal transportation  case as introduced in \cite{TruWang2009}. When the interval $\mathcal I = \mathbb R$, we will simply call $\partial\Omega$ uniformly $(\Gamma, A, G)$-convex. This includes the case when $A$ and $G$ are independent of $z$ as then the interval $\mathcal I$ becomes irrelevant.

As in  \cite{JTX2015}, we will assume that the function $G\in C^2(\partial \Omega\times \mathbb{R}\times \mathbb{R}^n)$ is concave in $p$,  that is $G_{pp}\le 0$ in $\partial \Omega \times \mathbb{R} \times \mathbb{R}^n$.
This includes the quasilinear  case, when $G_{pp}= 0$,
\begin{equation}\label{quasilinear}
G(x,z,p) = \beta(x, z)\cdot p - \varphi(x,z),
\end{equation}
where $\beta = G_p$ and $\varphi$ are defined on $\partial\Omega\times \mathbb R$. If $G_{pp} (\cdot,u,Du)\le 0$ on $\partial\Omega$ for $u\in C^1(\bar\Omega)$ then we say that $G$ is concave in $p$, with respect to $u$.
Note that we define the obliqueness in \eqref{1.3} with respect to the unit inner normal $\nu$, so that our function $G$ keeps the same sign with those in \cite{JTX2015} and is the negative of that in \cite{Urbas1997, TruWang2009, Tru2008}.  When $G$ is nonlinear in $p$ we will assume a further structural condition on $F$.

\begin{itemize}
\item[{\bf F6}:]
$\mathcal E_2: = F_{r_{ij}} r_{ik}r_{jk} \le o( |r|){\mathscr T}$,  uniformly for $a_0 <a \le F(r) \le b$, as $|r|\rightarrow \infty.$
\end{itemize}
We remark that the Hessian operators $F_k$ ($k=1,\cdots, n$) and the Hessian quotients $F_{n,k}$ ($1\le l\le n-1$) satisfy F6 in the positive cone $K^+$ \cite{Urbas2001}. Further examples are given in Section 4.2. To complete our hypotheses, we will also assume for the second derivative bounds in this paper, (unless F6 is satisfied), that the cone $\Gamma$ lies strictly in a half space in the sense that $r \le$ trace$(r)I$ for all $r\in\Gamma$, that is $ \Gamma \subset \mathcal P_{n-1}$  in accordance with our examples in Section \ref{subsection4.2}. This property is satisfied by the cones $\Gamma_k$ for $k\ge 2$, (but excludes the already well known quasilinear case when $k = 1$). We now state the global second derivative bound which can be viewed as the main result of this paper.

\begin{Theorem}\label{Th1.2}
Let $u\in C^4(\Omega)\cap C^{3}(\bar \Omega)$ be an admissible solution of the boundary value problem \eqref{1.1}-\eqref{1.2} in a $C^{3,1}$  domain $\Omega\subset\mathbb{R}^n$, with boundary $\partial\Omega$  uniformly $(\Gamma,A,G)$-convex with respect to $u$. Assume that F satisfies conditions F1-F5, $A\in C^2(\bar \Omega\times \mathbb{R}\times \mathbb{R}^n)$ is strictly regular in $\bar\Omega$, $B>a_0,\in C^2(\bar \Omega\times \mathbb{R}\times \mathbb{R}^n)$, $G\in C^{2,1}(\partial\Omega\times\mathbb{R}\times\mathbb{R}^n)$ is oblique  and concave in $p$ with respect to $u$
satisfying \eqref{1.3}, and either F5\textsuperscript{+} holds or $B$ is independent of $p$. Assume further that either
$\mathcal G$ is quasilinear and $ \Gamma \subset \mathcal P_{n-1}$ or F also satisfies  F6. Then we have the estimate
\begin{equation}\label{1.10}
\sup\limits_\Omega |D^2 u|\le C,
\end{equation}
where $C$ is a constant depending on $F, A, B, G, \Omega, \beta_0$ and $|u|_{1;\Omega}$.
\end{Theorem}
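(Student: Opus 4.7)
The plan is to reduce the global bound to a boundary bound via Theorem~\ref{Th1.1} (taking $\Omega^\prime = \Omega_0 = \Omega$, which is legitimate since $\Omega$ is $C^{3,1}$ and $u \in C^3(\bar\Omega)$), so that everything hinges on controlling $\sup_{\partial\Omega}|D^2 u|$. At a fixed $x_0 \in \partial\Omega$, I would flatten the boundary and split the second-derivative estimates into three pieces relative to the frame $(\tau,\nu)$: the pure tangential block $D_{\tau\tau}u$, the mixed block $D_{\tau\nu}u$, and the double-normal $D_{\nu\nu}u$. The general template is the one used in \cite{LTU1986,TruWang2009,JTX2015}, but with the convexity and regularity replaced by their augmented-Hessian analogues: uniform $(\Gamma,A,G)$-convexity and strict regularity of $A$.

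For the pure tangential bound, I would differentiate $\mathcal G[u]=0$ twice along a tangential direction $\tau$ at $x_0$. The resulting identity expresses $w_{\tau\tau}=D_{\tau\tau}u - A_{\tau\tau}$ on $\partial\Omega$ in terms of the second fundamental form of $\partial\Omega$, the matrix $P\,D_p A\cdot\nu\,P$, the boundary derivatives of $G$, and a nonpositive $G_{pp}$ piece from concavity of $G$ in $p$. The combination $-\delta\nu + P(D_p A\cdot\nu)P$ is precisely the $A$-curvature $K_A[\partial\Omega]$ in \eqref{1.6}. Uniform $(\Gamma,A,G)$-convexity with respect to $u$ then puts this matrix plus a scalar multiple of $\nu\otimes\nu$ into $\Gamma$, and, together with the assumption that $\Gamma$ sits in the half-space $\{r\le (\operatorname{tr}r)I\}$ (so upper bounds on the tangential block follow from the cone condition), this yields the tangential bound
\[
\sup_{\partial\Omega}\max_{|\tau|=1,\, \tau\perp\nu} D_{\tau\tau}u \le C.
\]

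For the mixed tangential-normal bound I would use a barrier argument based on the auxiliary function
\[
v(x) = \pm T u(x) + a\,d(x) - b\,|x-x_0|^2,
\]
where $d$ is the distance to $\partial\Omega$ and $T=\chi^i(x)D_i - g(x,u)$ is the natural tangential operator extending $G$ (with $\chi^i = G_{p_i}(\cdot,u,Du)$ modified to be tangential, and $g$ compensating so that $Tu=0$ on $\partial\Omega$ modulo controlled terms). Applying the linearized operator $L = F^{ij}D_{ij} + \text{(first order)}$ to $v$, using F1 and F5 to get $\mathscr T\ge\delta_0$, and exploiting strict regularity of $A$ to produce a negative contribution from the $A_{ij}^{kl}$ terms that absorbs the bad second-order pieces, I would verify $Lv\le 0$ in $\Omega\cap B_\rho(x_0)$ and $v\le 0$ on the parabolic boundary by choosing $a,b,\rho$ appropriately. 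The maximum principle then gives $|D_{\tau\nu}u(x_0)|\le C$. Finally, $D_{\nu\nu}u$ is controlled from above by combining the previous two bounds with the cone property of $\Gamma$ (or alternatively using the equation \eqref{1.1} together with F3--F5 to force an upper bound on eigenvalues given the tangential estimates), and from below by admissibility $M[u]\in\Gamma$.

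The main obstacle is the genuinely nonlinear $G$ case. When $G_{pp}\not\equiv 0$, both the tangential-tangential identity and the mixed-barrier computation generate quadratic terms of the form $F^{ij}u_{ik}u_{jk}$ (and in the mixed step also $G_{pp}$-weighted quadratic Hessian contributions), which the plain trace $\mathscr T$ cannot dominate. Hypothesis F6, which states $\mathcal E_2 = F^{ij}r_{ik}r_{jk} \le o(|r|)\mathscr T$, is exactly what is needed to absorb these terms into the good second-order part of the barrier; in the quasilinear case $G_{pp}=0$ the ugly terms are absent and F6 is not required, explaining the dichotomy in the hypotheses. Balancing $a$, $b$, $\rho$, and the auxiliary constants against the a~priori unknown size $M=\sup_{\partial\Omega}|D^2u|$ without circularity (the quadratic terms are $O(M^2)$ while $\mathscr T\cdot o(M) = o(M)\mathscr T$) is the delicate technical step, and once carried out combines with Theorem~\ref{Th1.1} to yield \eqref{1.10}.
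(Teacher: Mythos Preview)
Your overall plan---reduce to the boundary via Theorem~\ref{Th1.1}, then split into tangential, mixed, and normal pieces---is the right architecture. But you have swapped the roles of the two key structural hypotheses, and your tangential argument does not work.

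Differentiating $\mathcal G[u]=0$ once tangentially gives $|u_{\tau\beta}|\le C$ (this is the mixed estimate \eqref{mixed derivative estimate}, and it is essentially free---no barrier is needed). Differentiating \emph{twice} tangentially and using $G_{pp}\le 0$ yields
\[
D_\beta u_{\tau\tau}(x_0)\ \ge\ -C(1+M_2),
\]
a one-sided bound on the \emph{oblique derivative} of $u_{\tau\tau}$, not on $u_{\tau\tau}$ itself. The $A$-curvature $K_A[\partial\Omega]=-\delta\nu+P(D_pA\cdot\nu)P$ does not appear in this computation at all: it involves $D_pA\cdot\nu$, which arises only when one applies the linearized operator $L$ to a function of the distance $d$, not when one differentiates the boundary condition. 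So uniform $(\Gamma,A,G)$-convexity cannot close the tangential estimate in the way you describe, and your claimed bound $\sup_{\partial\Omega}D_{\tau\tau}u\le C$ (independent of $M_2$) is not available.

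In the paper the two hypotheses are used in the opposite places. Uniform $(\Gamma,A,G)$-convexity drives the barrier in Lemma~\ref{Lemma 2.2} for the \emph{double-oblique} derivative $u_{\beta\beta}$: computing $L\phi$ for $\phi=c(d-td^2)$ produces exactly the matrix $-D^2d+D_pA\cdot Dd+2t\,Dd\otimes Dd$, and the convexity hypothesis forces a shift of this matrix into $\Gamma$, whence (via F2 and F4) $L\phi\le -\tfrac12 c\sigma\mathscr T$. Comparing with $\bar v=G(\cdot,u,Du)+\tfrac{a}{2}|Du-Du(x_0)|^2$ then gives $u_{\beta\beta}(x_0)\le \epsilon M_2+C_\epsilon(1+M_2')$. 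Strict regularity of $A$ is used for the \emph{tangential} estimate in Lemma~\ref{Lemma 2.3}: one takes $v_\tau=u_{\tau\tau}+\tfrac{c_1}{2}|u_\tau|^2$ as in the proof of Theorem~\ref{Th1.1}, corrects it by lower-order terms and a defining function so that the one-sided bound on $D_\beta u_{\tau\tau}$ forces an \emph{interior} maximum, and at that interior point the inequality \eqref{critical inequality} (which is where strict regularity enters) supplies the good term $c_0\mathscr T|Du_\tau|^2$ that dominates the rest. The resulting bounds are of the form $\epsilon M_2+C_\epsilon$; one then closes by taking $\epsilon$ small after combining with Theorem~\ref{Th1.1}.

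Your remarks on F6 are essentially correct in spirit: in the nonlinear $G$ case the quadratic piece $F^{ij}u_{ik}u_{jl}(D_{p_kp_l}G)$ appears in the $u_{\beta\beta}$ barrier, and a term bounded by $\mathcal E_2$ appears when estimating $\mathcal L f(\cdot,u,Du)$ in the tangential argument; F6 is what absorbs both.
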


\begin{remark}\label{Remark1.1} A stronger condition than regularity of the matrix function $A$ is necessary in the above hypotheses as it is known from the Monge-Amp\`ere case that one cannot expect second derivative estimates for  general oblique boundary value problems for $A\equiv 0$, which is a special case of regular $A$ but not strictly regular, see \cite{Urbas1995}, \cite{Wang1992}. We also remark that the alternative condition that $B$ is independent of $p$ may be replaced by $D_pB$ sufficiently small, as well as $B$ convex with respect to $p$, and we will see from our treatment in Section \ref{Section 2} that such a condition is reasonable. Analogously, we may also replace the condition that $G$ is quasilinear by $D^2_pG$ sufficiently small.
\end{remark}

\begin{remark}\label{Remark1.2} It should be noted that the feasibility of our uniform convexity condition depends on an effective relationship between the boundary operator $\mathcal G$  and the curvature matrix $K_A[\partial\Omega]$ to ensure at least that the matrix
$$P(D_pA(x,z,p)\cdot \nu(x))P$$
is uniformly bounded from below, for all $x\in \partial\Omega$, $z\in \mathcal{I}(x)$, $G(x,z,p)\ge 0$. More generally  we need to impose a condition on the gradient $Du$, namely that there exists a sufficiently small boundary neighbourhood  $\Omega_{\rho} = \{x\in \Omega | \ d(x)<\rho\}$, where $d(x)={\rm dist}(x,\partial\Omega)$ and $\rho$ is a small positive constant, such that
\begin {equation}\label{general uniform convexity}
-\delta\nu(x^\prime) + P(D_pA(x,u(x),Du(x))\cdot \nu(x^\prime))P + \mu_0 \nu(x^\prime)\otimes\nu(x^\prime) \in \Gamma,
\end{equation}
for all $x\in \Omega_{\rho}$ and $x^\prime\in \partial \Omega$ satisfying $d(x,x^\prime) = d(x)$ and
$G(x^\prime,u(x),Du(x)) \ge 0$, for a positive constant $\mu_0$. In particular we would then obtain a bound \eqref{1.10} when the curvatures of $\partial \Omega$ are sufficiently large.  Furthermore since the regularity of $A$   on $\partial\Omega$ implies the curvature matrix  $K_A[\partial\Omega](x,z,p)$ is non-decreasing in $p\cdot\nu$, in the Neumann case, \eqref{quasilinear} with $\beta=\nu$, we need only assume that \eqref{uniform convexity} holds for $x\in\partial\Omega$, $z=u(x)$, $p = Du(x)$ and constant $\mu =\mu_0$, that is $x = x^\prime \in\partial\Omega$ in \eqref{general uniform convexity}.
\end{remark}

\begin{remark}\label{Remark 1.3}
We may assume more generally that the matrix function $A$ and scalar function $B$ are only defined and $C^2$ smooth on some open set
$\mathcal U \subset \mathbb{R}^n \times \mathbb{R} \times \mathbb{R}^n$, with $A$ strictly regular and $B > a_0$ in $\mathcal U$. Then Theorems \ref{Th1.1} and \ref{Th1.2}
will continue to hold provided the one jet $J_1 =J_1[u](\Omega) = (\cdot, u, Du)(\Omega)$ is strictly contained in $\mathcal U$, with the constants $C$ in the estimates \eqref{local global estimate} and \eqref{1.10} depending additionally on dist$(J_1, \partial\mathcal U)$. This remark is particularly pertinent to examples arising from optimal transportation or geometric optics where often the resultant Monge-Amp\`ere type equations are subject to constraints on $J_1[u]$ and moreover such constraints may determine an appropriate constant $\mu_0$ in \eqref{general uniform convexity}.
\end{remark}

In order to apply Theorem \ref{Th1.2}, to the existence of smooth solutions to \eqref{1.1}-\eqref{1.2}, we need gradient and solution estimates. Our conditions for gradient estimates are motivated by the case when $F$ is linear and the corresponding conditions for gradient estimates for uniformly elliptic quasilinear equations, as originally introduced by Ladyzhenskaya and Ural'tseva \cite{GTbook, LU1968}. First we need additional conditions on either $A$ or $F$ which facilitate an analogue of uniform ellipticity.

To formulate the condition on the matrix function $A$, we first express the strict regularity condition \eqref{1.8} in the equivalent form,
\begin{equation}\label{1.18}
A_{ij}^{kl}\xi_i\xi_j\eta_k\eta_l \geq \lambda|\xi|^2|\eta|^2 - \bar\lambda(\xi\cdot\eta)^2,
\end{equation}
for all $\xi, \eta \in \mathbb{R}^n$, where $\lambda$ and $\bar\lambda$ are positive functions in
$C^0(\Omega\times \mathbb{R}\times \mathbb{R}^n)$. To derive \eqref{1.18} from \eqref{1.8}, we
set $\eta^\prime = \eta -(\xi\cdot\eta)\xi$, for $|\xi|=|\eta| = 1$, and apply \eqref{1.8} to the orthogonal vectors
$\xi$ and $\eta^\prime$. We then call $A$ uniformly regular in $\Omega$, if $A$ is strictly regular in
 $\Omega$ and
for any $M > 0$, there exist positive constants $\lambda_0$ and $\bar\lambda_0$ such that
\begin{equation}\label{1.19}
\lambda(x,z,p) \ge \lambda_0, \quad \bar\lambda(x,z,p) \le\bar\lambda_0,
\end{equation}
for all $x\in\Omega$, $|z| \le M$, $p\in \mathbb{R}^n$.
The alternative condition on $F$ extends that introduced in the orthogonally invariant case for gradient estimates for curvature equations in \cite{CNS-5}.
\begin{itemize}
\item[{\bf F7}:]
 For  a given constant $a> a_0$, there exists constants $\delta_0, \delta_1>0$ such that
 $F_{r_{ij}}\xi_i\xi_j \ge  \delta_0 + \delta_1 \mathscr T $,  if $a\le F(r)$ and $\xi$ is a unit eigenvector of
 $r$ corresponding to a negative eigenvalue.
 \end{itemize}
We remark that F7 implies F5, with $b=\infty$, and moreover the Hessian quotients $F_{k,l}$, for $0\le l<k \le n$ satisfy F7 in the cone $\Gamma_k$ with constants $\delta_0, \delta_1>0$, depending only on $k,l$ and $n$, \cite{CNS-5,Tru1990}.

We formulate (almost) quadratic growth conditions on $A$ and $B$ as follows.
\begin{equation} \label{quadratic growth}
D_xA, D_xB, D_zA, D_zB = O(|p|^2), \quad D_zA\ge o(|p|^2)I, D_zB\ge o(|p|^2),\quad D_pA, D_pB = O(|p|),
\end{equation}
as $|p| \rightarrow \infty$, uniformly for $x\in\Omega$, $|z|\le M$ for any $M > 0$. Note that in the analogous  natural growth conditions in the uniform elliptic theory, the ``$o$'' lower bounds on $D_z A$ and $D_z B$ in \eqref{quadratic growth} can be dispensed with as a continuity
estimate is available \cite{GTbook, LieTru1986}. Also these  are automatically satisfied under standard uniqueness conditions, namely when $A$ and $B$ are non-decreasing in $z$, that is $D_z A \ge 0$ and $D_z B \ge 0$.

We now state a gradient estimate for oblique semilinear boundary conditions, that is when $\beta$ in \eqref{quasilinear} is independent of $z$ so that \eqref{1.2} may be written in the form
\begin{equation}\label{semilinear}
\mathcal G[u] = G(\cdot,u,Du) =\beta\cdot Du -\varphi(\cdot,u) = 0, \quad \mbox{on} \ \partial\Omega.
\end{equation}
Some variants and extensions, including weaker versions of conditions \eqref{quadratic growth}, local gradient estimates and extensions to nonlinear $G$  will also be considered in conjunction with our treatment in Section \ref{Section 3}.
\begin{Theorem}\label{Th1.3}
Let $u\in C^3(\Omega)\cap C^{2}(\bar \Omega)$ be an admissible solution of the boundary value problem \eqref{1.1}-\eqref{1.2} for an oblique, semilinear  boundary operator $\mathcal G$ in a $C^{2,1}$  domain $\Omega\subset\mathbb{R}^n$. Assume  that  $\mathcal F$ satisfies F1 and F3, $A, B \in C^1(\bar\Omega\times\mathbb{R}\times \mathbb{R}^n)$, satisfy \eqref{quadratic growth}, $b_0:= \inf\limits_\Omega B(\cdot, u,Du) >a_0$  and $\beta\in C^2(\partial\Omega)$,  $\varphi\in C^2(\partial\Omega\times \mathbb{R})$. Assume also one of the following further conditions:
\begin{itemize}
\item[(i):] $A$ is uniformly regular, $\mathcal F$ satisfies F2  and either  {\rm (a)} F5\textsuperscript{+}, with $b=\infty$, or {\rm (b)} F5, with $b=\infty$,  and   $B -p\cdot D_pB\le o(|p|^2)$ in \eqref{quadratic growth};
\item[(ii):]  $\mathcal{F}$ is orthogonally invariant satisfying F7, $A = o(|p|^2)$  in \eqref{quadratic growth} and
 $\beta = \nu$.
\end{itemize}
Then we have the estimate
\begin{equation}\label{1.21}
\sup\limits_\Omega |D u|\le C,
\end{equation}
where $C$ is a constant depending on $F, A, B, \Omega,b_0, \beta, \varphi$ and $|u|_{0;\Omega}$.
\end{Theorem}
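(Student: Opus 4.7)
The plan is a Bernstein-type maximum principle applied to an auxiliary function $W$ on $\bar\Omega$ that controls $|Du|^2$ from below, distinguishing the two hypothesis cases. At an interior maximum of $W$ we bound $|Du|$ from the once-differentiated equation together with the relevant structural condition, while at a boundary maximum the obliqueness of the semilinear condition $\beta\cdot Du=\varphi$, combined with the Hopf inequality $\beta\cdot DW\ge 0$, reduces the boundary bound to a direct calculation at $\partial\Omega$ in which tangential differentiation of $\mathcal{G}[u]=0$ expresses $\beta_jD_{jk}u$ as a first-order quantity in $Du$.

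For case (i), I would take $W$ of the form $W=\eta(u)|Du|^2$, with $\eta$ a suitable monotone exponential whose sign exploits the one-sided growth $D_zB\ge o(|p|^2)$ in \eqref{quadratic growth}. Differentiating \eqref{1.1} once in direction $D_k$ and contracting with $D_ku$ gives
\[
F^{ij}D_ku\,D_{ijk}u=D_ku\,D_kB+F^{ij}\bigl(D_ku\,A_{ij,x_k}+|Du|^2A_{ij,z}+A_{ij,p_l}D_{kl}u\,D_ku\bigr),
\]
so that $F^{ij}D_{ij}(\tfrac12|Du|^2)=F^{ij}D_{ik}u\,D_{jk}u$ plus the right-hand side above. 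Writing $D_{kl}u=M_{kl}+A_{kl}$, I would combine the $M$-part of the mixed term with the good quadratic $F^{ij}M_{ik}M_{jk}$ by Cauchy--Schwarz, using uniform regularity \eqref{1.19} to absorb it, leaving a remainder of order $\mathscr{T}|Du|^2$ via $D_pA=O(|p|)$. The remaining first-order $A,B$ contributions from \eqref{quadratic growth} are at most $O(\mathscr{T}|Du|^3)$ and are absorbed using F5 ($\mathscr{T}\ge\delta_0$) together with either F5\textsuperscript{+} to force $|M|$ to grow with $|Du|$ in subcase (a), or the improved inequality $B-p\cdot D_pB\le o(|p|^2)$ in subcase (b), which compensates for the loss of F5\textsuperscript{+} when we reorganise the $D_pB\cdot Du$ piece with the aid of $\eta$.

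For case (ii), orthogonal invariance permits diagonalising $M$ at the maximum point, and F7 then provides $F^{ij}\xi_i\xi_j\ge\delta_0+\delta_1\mathscr{T}$ along any eigenvector with negative eigenvalue, supplying the ellipticity in the ``downward'' direction that in case (i) was extracted from uniform regularity of $A$. Taking $W=\log|Du|+h(u)$ in the spirit of Caffarelli--Nirenberg--Spruck \cite{CNS-5}, the weaker growth $A=o(|p|^2)$ renders $A$ effectively negligible at high gradient, and the Neumann assumption $\beta=\nu$ makes the Hopf inequality $D_\nu W\ge 0$ combine directly with $D_\nu u=\varphi(x,u)$ to yield the boundary bound.

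The main obstacle throughout is the cubic mixed term $F^{ij}A_{ij,p_l}D_{kl}u\,D_ku$: being quadratic in $D^2u$ once paired with $Du$, it cannot be disposed of by Cauchy--Schwarz against the purely quadratic good term $F^{ij}M_{ik}M_{jk}$ alone. In case (i) the uniform lower bound on $\lambda$ in \eqref{1.19} is precisely what allows a fixed fraction of the good term to absorb it, while in case (ii) the stronger $A=o(|p|^2)$ growth together with F7's unconditional positivity in the negative-eigendirection substitute for that assumption.
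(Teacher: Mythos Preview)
Your outline has the right architecture---a Bernstein function with an exponential weight in $u$, interior maximum versus boundary maximum---but there are two genuine gaps that prevent it from closing.

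\textbf{Boundary maximum.} With $W=\eta(u)|Du|^2$ (or $\log|Du|+h(u)$), the inequality $D_\beta W\le 0$ at a boundary maximum does \emph{not} yield a bound on $|Du|$. The term $2\eta\,u_kD_\beta u_k=2\eta\,u_k\beta_j u_{jk}$ is not first order: tangential differentiation of $\beta\cdot Du=\varphi$ controls $\beta_j u_{jk}$ only for tangential $k$, leaving a residual $(D_\nu u)\,\beta_j\nu_l u_{jl}$ which, for $\beta=\nu$, is $\varphi\,u_{\nu\nu}$---a genuine second derivative. The paper circumvents this in two steps. First, instead of $|Du|^2$ it takes
\[
g=|\delta u|^2+|\mathcal G[u]|^2,
\]
which is uniformly comparable to $|Du|^2$ yet satisfies $Dg=D(|\delta u|^2)$ on $\partial\Omega$; then $D_\beta g=2\delta_k u\,D_\beta\delta_k u$ and a direct tangential differentiation of the boundary condition shows $D_\beta\delta_k u$ is first order, so $D_\beta g=O(|Du|^2)$ with a constant independent of $M_1$. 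Second, it augments the auxiliary function by $-\kappa M_1^2\phi$ with $\phi$ a negative defining function of $\Omega$ ($D_\nu\phi=-1$); this injects a term $+\kappa M_1^2$ into $D_\beta v$ at the boundary, and choosing $\kappa$ larger than the coefficient of the $O(|Du|^2)$ remainder forces $M_1^2\le C$. Your proposal omits both devices.

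\textbf{Interior maximum, case (i).} Uniform regularity \eqref{1.18}--\eqref{1.19} is a condition on the \emph{second} $p$-derivatives $A_{ij}^{kl}=D_{p_kp_l}A_{ij}$, not on $A_{ij}^l=D_{p_l}A_{ij}$; invoking it to absorb the mixed term $F^{ij}A_{ij}^l D_{kl}u\,D_ku$ is a non sequitur. Moreover, after splitting $D_{kl}u=w_{kl}+A_{kl}$ the $A$-part contributes a term of order $\mathscr T|Du|^4$ (since $D_pA=O(|p|)$ forces $A=O(|p|^2)$), which your good term $F^{ij}w_{ik}w_{jk}$ does not dominate without further input. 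The paper sidesteps this entirely: rather than differentiating \eqref{1.1}, it evaluates $\mathcal L(u_0-u)$ directly, and a second-order Taylor expansion of $p\mapsto A(\cdot,u,p)$ at $p=0$ produces the clean term
\[
\tfrac12 F^{ij}A_{ij}^{kl}(\cdot,u,\hat p)\,u_ku_l\ \ge\ \tfrac{\lambda_0}{2}\mathscr T|Du|^2-\tfrac{\bar\lambda_0}{2}F^{ij}u_iu_j,
\]
exactly by uniform regularity. The bad piece $\tfrac{\bar\lambda_0}{2}F^{ij}u_iu_j$ is then absorbed by the $K F^{ij}u_iu_j$ arising from $\mathcal L(e^{K(u_0-u)})$, once $K\ge\bar\lambda_0$.

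In case (ii) your idea of forcing a negative eigenvalue $w_{11}<0$ and invoking F7 is correct in spirit and matches the paper, but the paper again relies on the full auxiliary $v=g+M_1^2(\alpha\eta-\kappa\phi)$: the critical-point equation $Dv=0$ is what drives $w_{11}<0$, and the $\mathcal E_2'$ term from $\mathcal L g$ is needed to control $F^{ij}u_{ij}$ via Cauchy's inequality.
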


As we will show in Section \ref{Section 3} the concavity condition F2 in Theorem \ref{Th1.3} may be removed when $F$ is positive homogeneous of degree one and more generally. Note that the condition on $B$ in case (i) is automatically satisfied when $B$ is convex in $p$. Also when $B$ is bounded, we do not need to take $b=\infty$ in F5, while if the constants $\delta_0$ and $\delta_1$ in conditions F5 and F7 are independent of $a$, the constant $C$ in the estimate \eqref {1.21} does not depend on $b_0$ in  cases (i)(b) and (ii).
Analogously to the situation with uniformly elliptic equations, we obtain gradient estimates in terms of moduli of continuity when ``$o$'' is weakened to ``$O$'' in the hypotheses, \eqref{quadratic growth} and case (ii), of  Theorem \ref{Th1.3}. In particular we will also prove a H\"older estimate for admissible functions in the cones $\Gamma_k$ for $k>n/2$, when $A\ge O(|p|^2)I$, which extends our gradient estimate in the case $k=n$ in \cite{JTX2015}, Lemma 4.1. Taking account of this, as well as Theorems \ref{Th1.2} and \ref{Th1.3}, we have, as an example of our consequent existent results, the following existence theorem for the augmented $k$-Hessian and Hessian quotient equations. In its formulation we will assume the existence of subsolutions and supersolutions to provide the necessary solution estimates and an appropriate interval $\mathcal I$ in our boundary convexity conditions. For this purpose we will say that functions $\underline u$ and $\bar  u$, in $C^2(\Omega)\cap C^1(\bar \Omega)$, are respectively subsolution and supersolution of the boundary value problem \eqref{1.1}-\eqref{1.2} if
\begin{equation}\label{sub super F}
\mathcal F[\underline u] \ge B(\cdot, \underline u, D\underline u), \quad \mathcal F[\bar u] \le B(\cdot, \bar u, D\bar u),
\end{equation}
at points in $\Omega$, where they are admissible, and
\begin{equation}\label{sub super G}
\mathcal G [\underline u] \ge 0, \quad \mathcal G[\bar u] \le 0, \quad \mbox{on} \ \partial\Omega.
\end{equation}

\begin{Theorem}\label{Th1.4}
Let $\mathcal F = F_{k,l}$ for some $0\le l < k \le n$, $A\in C^2(\bar \Omega\times \mathbb{R}\times \mathbb{R}^n)$ strictly regular in $\bar\Omega$, $B > 0, \in C^2(\bar \Omega\times \mathbb{R}\times \mathbb{R}^n)$,
$\mathcal G$ is semilinear and oblique with $G\in C^{2,1}(\partial\Omega\times \mathbb{R}\times\mathbb{R}^n)$. Assume that $\underline u$ and $\bar  u$, $\in C^2(\Omega)\cap C^1(\bar \Omega)$ are respectively an admissible subsolution and a supersolution of \eqref{1.1}-\eqref{1.2} with $\partial\Omega$ uniformly $(\Gamma_k,A,G)$-convex with respect to the interval $\mathcal I = [\underline u, \bar u]$.
Assume also that $A$, $B$ and $\varphi$ are non-decreasing in $z$, with at least one of them strictly increasing, and that $A$ and $B$ satisfy the quadratic growth conditions \eqref{quadratic growth} with $D_pB =0$ if $l>0$. Then if one of the following further conditions is satisfied:
\begin{itemize}
\item[(i):]  $A$ is uniformly regular and $B - p\cdot D_pB\le o(|p|^2)$  in \eqref{quadratic growth};
\item[(ii):]   $\beta=\nu$ and either {\rm (a)} $A = o(|p|^2)$  in \eqref{quadratic growth} or {\rm (b)}  $k>n/2$ and $\Omega$ is convex;
\item[(iii):]  $k= n$ and  $A\ge O(|p|^2)I$ in place of \eqref{quadratic growth},
\end{itemize}
there exists a unique admissible solution $u \in C^{3,\alpha}(\bar \Omega)$ of the boundary value problem \eqref{1.1}-\eqref{1.2}, for any $\alpha<1$.
\end{Theorem}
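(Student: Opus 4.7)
The plan is the method of continuity built on a uniform a priori $C^{2,\alpha}(\bar\Omega)$ estimate obtained by chaining Theorems \ref{Th1.1}--\ref{Th1.3}. First I would bound $u$ itself: since $A$, $B$, $\varphi$ are non-decreasing in $z$ with at least one strictly increasing, a standard comparison argument using concavity of $F_{k,l}$ on $\Gamma_k$ and obliqueness of $\mathcal G$ gives $\underline u \le u \le \bar u$ on $\bar\Omega$, which simultaneously yields the $L^\infty$ bound and places $u(x)$ in the interval $\mathcal I(x) = [\underline u(x), \bar u(x)]$, so the uniform $(\Gamma_k, A, G)$-convexity hypothesis is actually in force along the solution. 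The same argument also yields uniqueness.

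Next I would derive the gradient estimate. In case (i) this is Theorem \ref{Th1.3}(i)(b); in case (ii)(a) it is Theorem \ref{Th1.3}(ii), using that $F_{k,l}$ satisfies F7 on $\Gamma_k$; in case (ii)(b) one first establishes a H\"older bound on $u$ by exploiting $k$-subharmonicity of admissible functions together with convexity of $\Omega$ (the estimate promised after Theorem \ref{Th1.3}), and then applies the $O$-version of Theorem \ref{Th1.3}(ii); in case (iii) one invokes Lemma 4.1 of \cite{JTX2015}. This yields $|Du|_{0;\Omega} \le C$. Theorem \ref{Th1.2} then delivers the second derivative estimate: $F_{k,l}$ satisfies F1--F5, $\mathcal G$ is semilinear so F6 is not required, and the alternative ``either F5\textsuperscript{+} holds or $B$ is independent of $p$'' covers precisely the split $l=0$ versus $l>0$, the latter being the reason for the hypothesis $D_pB = 0$. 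With $|D^2 u|_{0;\Omega} \le C$ in hand, equation \eqref{1.1} becomes uniformly elliptic, concave, with oblique boundary condition, so Evans--Krylov together with boundary $C^{2,\alpha}$ estimates for oblique problems upgrades the bound to $C^{2,\alpha}(\bar\Omega)$, and linear Schauder theory then promotes this to $C^{3,\alpha}(\bar\Omega)$ for any $\alpha < 1$ under the given smoothness of $A$, $B$, $G$, $\partial\Omega$.

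With the a priori estimate established, I would close by the method of continuity. Embed \eqref{1.1}--\eqref{1.2} in a family $\mathcal F_t[u] = B_t(\cdot, u, Du)$ in $\Omega$, $\mathcal G_t[u] = 0$ on $\partial\Omega$, for $t \in [0,1]$, chosen so that $\bar u$ (or a suitable smooth perturbation thereof) solves the $t = 0$ problem and so that the interpolated data $A_t, B_t, G_t$ preserve all structural hypotheses (strict regularity, quadratic growth, semilinear obliqueness, existence of the same $\underline u, \bar u$ as sub/supersolutions, and uniform $(\Gamma_k, A_t, G_t)$-convexity of $\Omega$ relative to $[\underline u, \bar u]$). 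The set of $t$ for which an admissible $C^{3,\alpha}(\bar\Omega)$ solution exists is nonempty, open by the implicit function theorem applied to the linearization (whose well-posedness follows from F1, strict obliqueness, and the Schauder theory for linear oblique problems), and closed by the uniform a priori estimate, hence equal to $[0,1]$. The main obstacle I anticipate is case (ii)(b): the H\"older estimate for admissible functions in $\Gamma_k$ with $k > n/2$ on convex $\Omega$ is the subtlest ingredient, since it must exploit Alexandrov/Sobolev-type properties of $k$-subharmonic functions together with the convexity of $\Omega$ and the Neumann condition to control $u$ near $\partial\Omega$. The remaining pieces reduce essentially to bookkeeping and direct application of the preceding theorems.
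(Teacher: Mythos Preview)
Your proposal is correct and follows essentially the same route as the paper: solution bounds via comparison, gradient estimates from Theorem~\ref{Th1.3} (with the H\"older estimate of Section~\ref{subsection3.3} feeding the $O$-version in case (ii)(b), and the Monge--Amp\`ere gradient bound from \cite{JTX2015} in case (iii)), second derivative estimates from Theorem~\ref{Th1.2}, then Evans--Krylov/Lieberman--Trudinger for $C^{2,\alpha}$, method of continuity, and Schauder for $C^{3,\alpha}$. Two minor points: the paper starts the continuity from the admissible \emph{sub}solution $\underline u$ rather than $\bar u$, which is more natural since admissibility along the deformation is then automatic; and the relevant gradient lemma in \cite{JTX2015} for case (iii) is Lemma~3.2 there, not Lemma~4.1.
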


This paper is organised as follows. In Section \ref{Section 2}, we first prove the local/global second derivative estimate, Theorem \ref{Th1.1}, as well as an extension to non-constant vector fields, Lemma \ref{Lemma 2.1}. Then in Section \ref{subsection2.2}, by delicate analysis of the second derivatives on the boundary, we complete the proof of Theorem \ref{Th1.2} through Lemmas \ref{Lemma 2.2} and \ref{Lemma 2.3} which treat respectively the estimation of non-tangential and tangential second derivatives. In the proof of Lemma \ref{Lemma 2.3} the strict regularity condition is crucial. In Section \ref{Section 3}, we first prove the global gradient estimate, Theorem \ref{Th1.3}, under various more general structural assumptions on $F$, $A$ and $B$. Following this, in Section \ref{subsection3.2}, we prove the analogous local gradient estimates in Theorem \ref{Th3.1}. In Section \ref{subsection3.3} we derive a H\"older estimate for admissible functions in the cones $\Gamma_k$ for $k>n/2$, from which we can infer gradient estimates under natural quadratic growth conditions. In Section \ref{Section 4}, we prove  existence theorems, Theorems \ref{Th4.1} and \ref{Th4.2} for semilinear and nonlinear oblique boundary value problems  based on the {\it a priori} derivative estimates, which include Theorem \ref{Th1.4} as a special case. We then present in Section \ref{subsection4.2} various examples of operators $\mathcal{F}$, matrices $A$, and boundary operators $\mathcal{G}$ along with the application to conformal geometry, where we relax the umbilic boundary  restriction for second derivative estimates in Yamabe problems with boundary as studied in  \cite{SSChen2007, JLL2007}. Furthermore we show in Section \ref{subsection4.3} that our theory can be applied to  degenerate elliptic equations, where $F$ is only assumed non-decreasing in F1; see Corollary \ref{Cor4.1},  and provide a particular example in Corollaries \ref{Cor4.2} and \ref{Cor4.3}. In Section \ref{subsection4.4}, we conclude this paper with some final remarks which also foreshadow further results.

\section{Second derivative estimates}\label{Section 2}
\vskip10pt

We introduce some notation and proceed to the second order derivative estimates for admissible solutions $u$ of \eqref{1.1}-\eqref{1.2}.
We denote the augmented Hessian $M[u]$ by $w=\{w_{ij}\}$, that is
\begin{equation}\label{2.10}
w_{ij}=D_{ij}u-A_{ij}(\cdot,u,Du).
\end{equation}
As usual we denote the first and second partial derivatives of $F$ at $M[u]$ by $F^{ij}$ and $F^{ij,kl}$, namely
\begin{equation}\label{2.12}
F^{ij}=\frac{\partial F}{\partial r_{ij}}(M[u]),\quad  F^{ij,kl}=\frac{\partial^2 F}{\partial r_{ij}\partial{r_{kl}}}(M[u]).
\end{equation}
Then for an admissible $u$, we know from F1 that the matrix $\{F^{ij}\}$ is positive definite and from F2 that
$$F^{ij,kl}\eta_{ij} \eta_{kl} \le 0$$
for all $\{\eta_{ij}\}\in \mathbb{S}^n$. Let us also denote $\mathscr{T}= {\rm trace}(F_r)=\sum\limits_{i=1}^nF^{ii}$ so that
by positivity $|F_r| \le \mathscr{T}$.

It will also be convenient here to use \eqref{1.18} to express the strict regularity of $A$, with respect to $u$, in the form
\begin{equation}\label {strict regularity}
A_{ij}^{kl}(\cdot,u,Du)\xi_i\xi_j\eta_k\eta_l \ge c_0 |\xi|^2 |\eta|^2 - c_1(\xi\cdot\eta)^2,
\end{equation}
for arbitrary vectors $\xi, \eta \in \mathbb{R}^n$, where $c_0$ and $c_1$ are positive constants depending on
$A$ and sup$(|u| +|Du|)$. Then for any positive symmetric matrix  $\{F^{ij}\}$ with eigenvalues $\lambda_1,\cdots, \lambda_n >0$ and corresponding eigenvectors, $\phi^1, \cdots, \phi^n$, we can write
\begin{equation}\label{critical inequality}
\begin{array}{ll}
F^{ij} A_{ij}^{kl}\eta_k\eta_l  \!\!&\!\!\displaystyle= \sum_{s=1}^n \lambda_sA_{ij}^{kl}\phi^s_i\phi^s_j \eta_k\eta_l\\
                                                  \!\!&\!\!\displaystyle\ge c_0\sum_{s=1}^n\lambda_s|\eta|^2 -c_1\sum_{s=1}^n\lambda_s(\phi^s\cdot\eta)^2\\
                                                  \!\!&\!\!\displaystyle= c_0\mathscr{T}|\eta|^2 -c_1F^{ij}\eta_i\eta_j.
 \end{array}
 \end{equation}

%%%%%%%%%%%%%%%%%%%%%%%%%%%%%%%%%%%%%%%%%%%%%%%%%%%%%%%%%%%%%%%%%%%%%%%%%%%%%%%%%%%%%%%%%%%%%%%%%%%%%%%%%%%%%%%%%%%%%%%%%%%%%%%%%%%%%%%%%%%%

\subsection{Local/global second derivative estimates}\label{subsection2.1}
In this subsection, we derive the local and global second derivative estimates for admissible solutions of equation \eqref{1.1}, and give the proof of Theorem \ref{Th1.1}.
We will need to differentiate the equation \eqref{1.1}, with respect to  vector fields $\tau = (\tau^1, \cdots, \tau^n)$ with $\tau^i\in C^2(\bar\Omega), i=1,\cdots,n$. We introduce the linearized operators of the operator $\mathcal F$  and equation \eqref{1.1},
\begin{equation}\label{linearized operators}
 Lv:= F^{ij}[D_{ij}v-A_{ij}^kD_kv], \quad \mathcal{L}v:= Lv-(D_{p_k}B(\cdot,u,Du))D_kv,
\end{equation}
for $v\in C^2(\Omega)$, where $A_{ij}^k=D_{p_k}A_{ij}(\cdot,u,Du)$. For convenience below we shall as usual denote partial derivatives of functions on $\Omega$ by subscripts, that is $u_i = D_i u, u_\tau = D_\tau u = \tau^i u_i,
u_{ij} = D_{ij}u, u_{i \tau} = u_{ij}\tau^j, u_{\tau\tau} = u_{ij}\tau^i\tau^j $ etc.
Differentiating once we now obtain,
\begin{equation} \label{2.6}
\mathcal{L}u_\tau = F^{ij}\tilde D_{x_\tau} A_{ij} + \tilde D_{x_\tau}  B
                                 + F^{ij}(2\tau^k_i u_{jk} +\tau^k_{ij}u_k - A_{ij}^k \tau^l_ku_l) - (D_{p_k}B) \tau^l_ku_l.
\end{equation}
where $\tilde D_{x_\tau} = \tau\cdot\tilde D_x$  and  $\tilde D_x = D_x +DuD_z$.
Differentiating twice, we then obtain
\begin{equation}
\begin{array}{ll}\label{2.7}
\mathcal{L}u_{\tau\tau}= \!&\!\!\displaystyle -F^{ij,kl} D_{\tau}w_{ij}D_{\tau}w_{kl} +F^{ij}[\tau^k\tau^l\tilde D_{x_kx_l}A_{ij}+A_{ij}^{kl}u_{k\tau}u_{l\tau}
+2(\tilde D_{x_\tau}A^k_{ij})u_{k\tau}] \\
  \!&\!\!\displaystyle  + (D_{p_kp_l}B)u_{k\tau} u_{l\tau} +\tau^k\tau^l\tilde D_{x_kx_l} B +  2(\tilde D_{x_\tau} D_{p_k} B)u_{k\tau}\\
  \!&\!\!\displaystyle  + F^{ij}[4\tau^k_i u_{jk\tau} + (\tau^k\tau^l)_{ij} u_{kl}- 2A_{ij}^s \tau^k_s u_{k\tau}]
  -2(D_{p_s}B) \tau^k_su_{k\tau},
 \end{array}
\end{equation}
where $D_p\tilde D_x = \tilde D_x D_p$ is used. To derive the local and global estimates in Theorem \ref{Th1.1}, we only need $\tau$ to be a constant unit vector, in which case the last two terms in \eqref{2.6} and \eqref{2.7} are not present. Setting
\begin{equation}\label{2.8}
v_1 = u_{\tau\tau} + \frac{1}{2}c_1 |u_\tau|^2,
\end{equation}
we then have from the concavity F2 and strict regularity \eqref{strict regularity},
\begin{equation}\label{2.9}
\mathcal{L}v_1 \ge c_0\mathscr{T}|Du_\tau|^2 - C(1+\mathscr{T})(1+|Du_\tau|)  + \lambda_B |Du_\tau|^2,
\end{equation}
where $C$ is a constant depending on  $n,  |A|_{C^2}, |B|_{C^2}$ and $|u|_{1;\Omega}$ and $\lambda_B$ is the minimum eigenvalue of the matrix $D^2_pB$.
Invoking conditions F5\textsuperscript{+}, or F5 and convexity of $B$ in $p$, we then have from the classical maximum principle, under the hypotheses of Theorem \ref{Th1.1},
\begin{equation}\label{2.10}
\sup_{\Omega}u_{\tau\tau}\le \sup_{\partial\Omega}|u_{\tau\tau}| + C,
\end{equation}
which implies a global upper bound for $D^2u$, since $\tau$ can be any unit vector. To get the corresponding
local estimate, we fix a function $\zeta\in C^2(\mathbb R^n)$, satisfying $0\le\zeta\le 1$ and define,
\begin{equation}\label{2.11}
v= \zeta^2 v_1 = \zeta^2(u_{\tau\tau} + \frac{1}{2}c_1|u_\tau|^2).
\end{equation}
From the inequality \eqref{2.9} we obtain the corresponding inequality for $v$ at a maximum point, namely
\begin{equation}\label{2.12}
\mathcal{L}v \ge c_0\mathscr{T}\zeta^2|Du_\tau|^2 - C(1+\mathscr{T})(1+\zeta^2|Du_\tau| + |u_{\tau\tau}|)  + \lambda_B \zeta^2|Du_\tau|^2,
\end{equation}
 where $C$ depends additionally on $|\zeta|_{2;\Omega}$,
so that extending \eqref{2.10}, we have
\begin{equation}\label{upper bound}
\sup_{\Omega}(\zeta^2u_{\tau\tau})\le \sup_{\partial\Omega}(\zeta^2 |u_{\tau\tau}|) + C.
\end{equation}
The lower bounds follow from the concavity F2 since for a fixed matrix $r_0\in \Gamma$, for example $r_0=I$, and positive matrix $a^{ij}_0 = F_{r_{ij}} (r_0)$, we have
\begin{equation}\label{lower second bound}
\begin{array}{ll}
a^{ij}_0u_{ij} \!\!&\!\!\displaystyle = a^{ij}_0 (w_{ij} + A_{ij}) \\
                         \!\!&\!\!\displaystyle \ge F(w) -F(r_0) + \sum\limits_{i=1}^na^{ii}_0 +a^{ij}_0A_{ij} \ge -C
 \end{array}
 \end{equation}
 by virtue of \eqref{1.1}. Taking $\tau$ to be an eigenvector of $\{a^{ij}_0\}$, we infer the full bound from the
 upper bound \eqref{upper bound}. Hence we conclude the estimate
 \begin{equation}
 \sup_{\Omega}(\zeta^2|D^2u|)\le \sup_{\partial\Omega}(\zeta^2 |D^2u|) + C.
 \end{equation}
Theorem \ref{Th1.1} now follows by taking $\zeta\in C^2_0(\Omega_0)$ and $\zeta =1$ on $\Omega^\prime$.

\vskip8pt

The one-sided estimate \eqref{upper bound} can be extended to non-constant  vector fields $\tau$ when $\mathcal F$ is orthogonally invariant. Moreover the relevant calculations will be critical for us in the proof of  tangential boundary estimates when  $G$ is nonlinear in $p$.

\begin{Lemma}\label{Lemma 2.1}
Assume in addition to the hypotheses of Theorem \ref{Th1.1} that the operator $\mathcal F$ is orthogonally invariant.
Then the estimate \eqref{upper bound} holds for any vector field $\tau$ with skew symmetric Jacobian, with the constant $C$ depending additionally on $|\tau|_{1;\Omega}$.
\end{Lemma}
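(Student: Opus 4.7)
The plan is to re-run the proof of Theorem \ref{Th1.1} with $\tau$ non-constant, checking that the extra terms are controllable. First observe that if $D\tau$ is skew-symmetric at every point then differentiating $\partial_j\tau^i + \partial_i\tau^j = 0$ and using symmetry of mixed partials forces $\tau^k_{ij}\equiv 0$, so $\tau$ is affine; in particular $\tau^k_i$ is constant and bounded by $C|\tau|_{1;\Omega}$. Taking again $v_1 = u_{\tau\tau} + \tfrac{1}{2}c_1|u_\tau|^2$ and $v = \zeta^2 v_1$ and computing $\mathcal{L}v$ via \eqref{2.6} and \eqref{2.7}, the additional contributions to $\mathcal{L}u_{\tau\tau}$ beyond the constant-$\tau$ case are
\[
(a)\ 4F^{ij}\tau^k_i u_{jk\tau},\quad (b)\ 2F^{ij}\tau^k_i\tau^l_j u_{kl},\quad (c)\ -2F^{ij}A^s_{ij}\tau^k_s u_{k\tau},\quad (d)\ -2(D_{p_s}B)\tau^k_s u_{k\tau},
\]
together with analogous lower-order terms appearing in $c_1 u_\tau\mathcal{L}u_\tau$ and in $F^{ij}(u_\tau)_i(u_\tau)_j$. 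Apart from (a), each of these is bounded by $C|\tau|_{1;\Omega}(1+\mathscr{T})(1+|D^2u|+|Du_\tau|)$, and so is absorbed into the good term $c_0\mathscr{T}|Du_\tau|^2$ coming from the strict regularity of $A$ via \eqref{critical inequality}, exactly as in the constant case.

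The substantive new issue is (a). Writing $u_{jk\tau} = D_\tau w_{jk} + D_\tau A_{jk}$, the $A$-part is again $O(\mathscr{T}(1+|D^2u|))$ and absorbed. For the remaining piece $F^{ij}\tau^k_i D_\tau w_{jk}$ the orthogonal invariance of $\mathcal{F}$ is essential. At the maximum point $x_0$ of $v$ we rotate coordinates so that $w(x_0)$ is diagonal with eigenvalues $\lambda_i$; then $F^{ij}(x_0) = f_i\delta_{ij}$ with $f_i = \partial f/\partial\lambda_i$. Symmetrizing the resulting sum by means of $\tau^i_k = -\tau^k_i$ and the symmetry of $D_\tau w$ gives
\[
F^{ij}\tau^k_i (D_\tau w)_{jk}\big|_{x_0} = \tfrac{1}{2}\sum_{i\ne k}(f_i - f_k)\tau^k_i (D_\tau w)_{ik},
\]
so only the off-diagonal entries of $D_\tau w$ enter and terms with $f_i=f_k$ are automatically zero.

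The standard orthogonally invariant form of F2 (the classical identity for symmetric functions of eigenvalues) yields
\[
-F^{ij,kl}(D_\tau w)_{ij}(D_\tau w)_{kl} \ge \sum_{i\ne k}\frac{|f_i - f_k|}{|\lambda_i - \lambda_k|}(D_\tau w_{ik})^2,
\]
with the summand interpreted as $0$ when $f_i=f_k$. Combining with AM-GM (with weight $\epsilon$) and the crude estimate $|f_i - f_k|\cdot|\lambda_i - \lambda_k| \le C\mathscr{T}(1+|D^2u|)$ gives
\[
|F^{ij}\tau^k_i (D_\tau w)_{jk}| \le \epsilon\bigl[-F^{ij,kl}(D_\tau w)_{ij}(D_\tau w)_{kl}\bigr] + \frac{C|\tau|_{1;\Omega}^2}{\epsilon}\mathscr{T}(1+|D^2u|).
\]
For $\epsilon$ small the first summand is absorbed into the nonnegative term $-F^{ij,kl}D_\tau w_{ij}D_\tau w_{kl}$ already present in \eqref{2.7}, and the second is absorbed as in the first paragraph. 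The inequality leading to \eqref{2.12} is thereby recovered for non-constant $\tau$, and the one-sided estimate \eqref{upper bound} follows as in Theorem \ref{Th1.1}, with $C$ now depending additionally on $|\tau|_{1;\Omega}$. The main obstacle throughout is the handling of (a); overcoming it rests on the simultaneous use of the orthogonal invariance of $\mathcal{F}$, which lets us diagonalize $F^{ij}$ in the eigenframe of $w$ and supplies the concavity identity in its off-diagonal form, and the skew-symmetry of $D\tau$, which cancels the would-be diagonal contributions to $F^{ij}\tau^k_i(D_\tau w)_{jk}$.
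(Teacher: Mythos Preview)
Your treatment of the third–order term (a) is essentially sound: diagonalising $w$ at the maximum point and invoking the off–diagonal part of the concavity of $F$ does control $F^{ij}\tau^k_i D_\tau w_{jk}$ up to an error of order $\mathscr{T}(1+|D^2u|)$. The gap is in the next step, where you assert that terms bounded by $C(1+\mathscr{T})(1+|D^2u|)$ are ``absorbed into the good term $c_0\mathscr{T}|Du_\tau|^2$ exactly as in the constant case''. In the constant--$\tau$ argument the bad terms in \eqref{2.9} and \eqref{2.12} involve only $|Du_\tau|$ and $|u_{\tau\tau}|$, both of which are controlled by $|Du_\tau|$; no full Hessian $|D^2u|$ appears. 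Here, however, both your residual from (a) and the term (b) $=2F^{ij}\tau^k_i\tau^l_j u_{kl}$ (which in the eigenframe equals $2\sum_{i,k}f_i(\tau^k_i)^2\lambda_k$ up to $O(\mathscr{T})$) are genuinely of size $\mathscr{T}|D^2u|$, and $|D^2u|$ is \emph{not} dominated by $|Du_\tau|^2$ for a single fixed field $\tau$. Following your inequality through the maximum--point argument yields only
\[
\sup_\Omega(\zeta^2 u_{\tau\tau})\le \sup_{\partial\Omega}(\zeta^2|u_{\tau\tau}|)+C\bigl(1+\sqrt{M_2}\bigr),
\]
which is precisely the weaker bound the paper itself obtains at the intermediate stage, not \eqref{upper bound} with $C$ independent of $M_2$.

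The paper closes this gap by a different use of orthogonal invariance. Differentiating $F(P_\alpha w P_\alpha^t)$ in $\alpha$, with $P_\alpha=\exp(\alpha D\tau)$ orthogonal, yields the exact identities $F^{ij}\tau^k_i w_{jk}=0$ and \eqref{2.17}, which combine to show that the whole block
\[
-F^{ij,kl}D_\tau w_{ij}D_\tau w_{kl}+4F^{ij}\tau^k_i D_\tau w_{jk}+2F^{ij}\tau^k_i\tau^l_j w_{kl}
\ \ge\ -2F^{ij}\tau^k_i\tau^l_k w_{jl}.
\]
Thus (a), (b) and the concavity term are handled simultaneously, leaving a \emph{single structured} residual $-2F^{ij}\tau^k_i\tau^l_k w_{jl}$. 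This residual is then cancelled exactly by replacing $u_{\tau\tau}$ in the auxiliary function by $(u_\tau)_\tau=u_{\tau\tau}+\tau^l\tau^k_l u_k$, since \eqref{2.20} gives $\mathcal{L}(\tau^l\tau^k_l u_k)=2F^{ij}\tau^k_i\tau^l_k w_{jl}+O(\mathscr{T}+1)$. Your Cauchy--Schwarz approach to (a) does not produce a residual of this cancellable form, so the modification of the auxiliary function is unavailable to you; without it the $\sqrt{M_2}$ loss persists and the lemma as stated is not proved.
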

\begin{proof} First we note that the Jacobian $D\tau = \{\tau^i_j\}$ will in fact be a constant skew symmetric matrix so that
$\tau$ itself is an affine mapping. Consequently the second derivatives of $\tau$ in \eqref{2.6} and \eqref{2.7} will vanish.
Our main concern now is to control the third derivatives of $u$ in the last line of \eqref {2.7} and for this we adapt the key identities in the proof of Lemma 2.1 in \cite{ITW2004}, which follow by differentiating $F(P_\alpha r P^t_\alpha)$, for $r\in\Gamma$, with respect to $\alpha$ and setting $\alpha = 0$, where $P_\alpha = {\rm exp}(\alpha D\tau)$ is orthogonal by virtue of the skew symmetry of $D\tau$. Thus taking $r_{ij} = w_{ij}$, we have
\begin{equation}\label{2.16}
F^{ij}\tau^k_iw_{jk} = 0
\end{equation}
and
\begin{equation}\label{2.17}
F^{ij}(\tau^k_i\tau^l_j w_{kl} + \tau^k_i\tau^l_k w_{jl}) + 2 F^{ij,kl}\tau^s_iw_{js}\tau^t_k w_{lt} = 0.
\end{equation}
Differentiating \eqref{2.16} with respect to $\tau$, we have
\begin{equation}\label{2.18}
F^{ij}\tau^k_iD_\tau w_{jk} + F^{ij,kl}\tau^s_i w_{js}D_\tau w_{kl} = 0.
\end{equation}
From \eqref{2.17} and \eqref{2.18}, we then obtain
\begin{equation}
\begin{array}{ll}\label{2.19}
               \!\!&\!\!\displaystyle -F^{ij,kl}D_{\tau}w_{ij}D_{\tau}w_{kl}+2F^{ij}[2(\tau^k_i)D_\tau w_{jk} + \tau^k_i\tau^l_j w_{kl}]  \\
           =   \!\!&\!\!\displaystyle -F^{ij,kl}(D_\tau w_{ij} +2\tau^s_i w_{js})(D_\tau w_{kl} +2\tau^t_k w_{lt}) -2F^{ij}\tau^k_i\tau^l_k w_{jl} \\
           \ge \!\!&\!\!\displaystyle -2F^{ij}\tau^k_i\tau^l_k w_{jl}
\end{array}
\end{equation}
by the concavity F2. Substituting into \eqref{2.7}, using the definition $w_{ij} = u_{ij} - A_{ij}$ and following our previous argument for constant $\tau$, we would obtain  the upper bound \eqref {upper bound}, with constant $C$ replaced by $C(1+\sqrt M_2)$ where  $M_2 =$ sup$_\Omega |D^2u|$.

 In order to get the full strength of Lemma \ref{Lemma 2.1}, we need to control the last term in \eqref{2.19}. Note that this term is nonnegative if $\Gamma = \Gamma_n$ or in the special case when $|D\tau^i| = \tau_0, i=1,\cdots,n$, for a constant $\tau_0$, since
$$-F^{ij}\tau^k_i\tau^l_k w_{jl} \ge (\tau_0)^2 F^{ij}w_{ij} \ge 0$$
from \eqref {homogeneity}.  In general we proceed by calculating
\begin{equation} \label{2.20}
\mathcal L (\tau^l\tau^k_lu_k) =  F^{ij}(2\tau^k_i\tau^l_ku_{jl}  - A^s_{ij}\tau^k_s\tau^l_k u_l) -
(D_{p_s}B)\tau^k_s\tau^l_k u_l +\tau^l\tau^k_l \mathcal L u_k.
\end{equation}
Taking account of \eqref{2.6}, \eqref{2.7}, \eqref{2.19} and \eqref{2.20}, we then obtain the differential inequality \eqref{2.12} with the function $u_{\tau\tau}$  in \eqref{2.11} replaced by the function
$$(u_\tau)_\tau= u_{\tau\tau} +\tau^l\tau^k_lu_k,$$
from which Lemma \ref{Lemma 2.1} follows.
Note that in the process of obtaining the inequality \eqref{2.12}, there is a term $c_1\zeta^2u_\tau F^{ij}\tau_i^kw_{jk}$ which is identically equal to zero by using \eqref{2.16}.
\end{proof}

We remark that for the Monge-Amp\`ere operator, in the form $F(r) =\log(\det r)$, we can take $\tau$ to be any $C^2$ vector field in \eqref{upper bound} with the constant $C$ now depending  additionally on $|\tau|_{2;\Omega}$. This follows from the identity $F^{ik}r_{kj} = \delta_{ij}$.

\vskip8pt

%%%%%%%%%%%%%%%%%%%%%%%%%%%%%%%%%%%%%%%%%%%%%%%%%%%%%%%%%%%%%%%%%%%%%%%%%%%%%%%%%%%%%%%%%%%%%%%%%%%%%%%%%%%%%%%%%%%%%%%%%%%%%%%%%%%%%

\subsection{Boundary second derivative estimates}\label{subsection2.2}

To prove Theorem \ref{Th1.2}, we  have to establish estimates for second derivatives on the boundary $\partial\Omega$ under the boundary condition \eqref{1.2}. First we will consider
the non-tangential estimates and as in \cite{JTX2015}, the geometric convexity hypotheses on the domain
$\Omega$ in Theorem \ref{Th1.2} are crucial for this stage. We assume that the functions $G(\cdot, z,p)$ and $\nu$ have been extended to $\bar\Omega$, to be constant along normals to $\partial \Omega$ in some neighbourhood $\mathcal N$ of $\partial\Omega$. Differentiating the boundary condition \eqref{1.2} with respect to a tangential vector field $\tau$ we have
\begin{equation}\label{tangen diff the bc}
\tilde D_{x_\tau} G+ (D_{p_k}G) u_{k \tau }  = 0,\quad {\rm on} \ \partial\Omega,
\end{equation}
and hence we have an estimate
\begin{equation}\label{mixed derivative estimate}
|u_{\tau\beta}|\le C,\quad {\rm on} \ \partial\Omega,
\end{equation}
for any unit tangential vector field $\tau$, where $\beta=D_{p}G(\cdot,u,Du)$ and the constant $C$ depends on $G, \Omega$ and $|u|_{1;\Omega}$.  The estimation of the pure second order oblique derivatives $u_{\beta\beta}$ is much more complicated. In general we can only obtain an estimate from above in terms of the tangential derivatives on the boundary. Setting
$$ M_2 = \sup\limits_\Omega |D^2 u|, \quad M^\prime_2 = \sup\limits_{\partial\Omega} \sup\limits_{|\tau| = 1, \tau\cdot\nu=0} |u_{\tau\tau}|, $$
we formulate this as follows.
\begin{Lemma}\label{Lemma 2.2}
Let $u\in C^{3}(\bar \Omega)$ be an admissible solution of the boundary value problem \eqref{1.1}-\eqref{1.2} in a $C^{2,1}$  domain $\Omega\subset\mathbb{R}^n$, with boundary $\partial\Omega$ uniformly $(\Gamma,A,G)$-convex with respect to $u$. Assume that  F  satisfies conditions F1-F5, $A\in C^1(\bar \Omega\times \mathbb{R}\times \mathbb{R}^n)$, $B>a_0,\in C^1(\bar \Omega\times \mathbb{R}\times \mathbb{R}^n)$, $G\in C^2(\partial\Omega\times\mathbb{R}\times\mathbb{R}^n)$ is oblique with respect $u$ satisfying \eqref{1.3}, either F5\textsuperscript{+} holds or $B$ is independent of $p$ and either $\mathcal G$ is quasilinear satisfying \eqref{quasilinear} or  F6 holds. Then  for any $\epsilon >0$,
\begin{equation}\label{pure oblique estimate}
\sup\limits_{\partial\Omega} u_{\beta\beta}\le \epsilon M_2 +C_\epsilon(1+M^\prime_2),
\end{equation}
where $\beta=D_{p}G(\cdot,u,Du)$, and $C_\epsilon $ is a constant depending on $\epsilon, F, A, B, G, \Omega, \beta_0$ and $|u|_{1;\Omega}$. In the case when F6 holds, the estimate \eqref{pure oblique estimate} holds without the dependence on $M^\prime_2$.
\end{Lemma}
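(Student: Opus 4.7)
The plan is a barrier argument at a boundary point $x_0\in\partial\Omega$ realising the maximum $\sup_{\partial\Omega}u_{\beta\beta}$. Fix coordinates with $\nu(x_0)=e_n$, extend $\nu$, $\beta$ and $G$ into $\mathcal N:=\Omega\cap B_\rho(x_0)$ by constant continuation along normals, and decompose $\beta=(\beta\cdot\nu)\nu+\beta^T$ on $\partial\Omega$ with $\beta\cdot\nu\ge\beta_0$. The tangential-oblique bound \eqref{mixed derivative estimate} together with the definition of $M_2'$ yields
\begin{equation*}
u_{\beta\beta}(x_0)\le(\beta\cdot\nu)^2\,u_{nn}(x_0)+C(1+M_2'),
\end{equation*}
so the task reduces to producing an upper bound $u_{nn}(x_0)\le\epsilon M_2+C_\epsilon(1+M_2')$. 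Since $v(x):=G(x,u(x),Du(x))$ vanishes on $\partial\Omega$, its inner normal derivative at $x_0$ is
\begin{equation*}
\partial_\nu v(x_0)=\tilde D_\nu G+G_{p_k}u_{k\nu}(x_0)=(\beta\cdot\nu)\,u_{nn}(x_0)+O(1+M_2'),
\end{equation*}
again by \eqref{mixed derivative estimate}, so a one-sided bound on $\partial_\nu v(x_0)$ converts directly into the required bound.

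To obtain such a bound I would build a barrier in $\mathcal N$ of the form $\Psi(x):=a\bigl[1-e^{-\alpha d(x)}\bigr]+b\,|x-x_0|^2$, where $d=\mathrm{dist}(\cdot,\partial\Omega)$ is smoothed into the interior and $a,\alpha,b$ are to be tuned, with the goal of arranging $\mathcal L\Psi\le\mathcal L v$ in $\mathcal N$, $\Psi\ge v$ on $\partial\mathcal N\setminus\{x_0\}$, and $\Psi(x_0)=v(x_0)=0$. Expanding $D_{ij}v=D_{ij}G(\cdot,u,Du)$ in the definition of $\mathcal Lv$ and using the equation differentiated once along $\beta$ as in \eqref{2.6}, the third-order contributions $F^{ij}\beta^ku_{kij}$ cancel against those produced by $-F^{ij}A_{ij}^kD_kv-D_{p_k}B\,D_kv$ (after relabelling the summation indices in $F^{ij}\beta^kA_{ij}^lu_{lk}$ and $\beta^kB_{p_l}u_{lk}$). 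What survives in $\mathcal Lv$ is, up to safe pieces of size $O(\mathscr T)(1+M_2)$, a mixed linear-in-$D^2u$ term $F^{ij}\beta^k_iu_{kj}$ and, in the non-quasilinear case, the signed quadratic $F^{ij}G_{p_kp_l}u_{ki}u_{lj}\le 0$ coming from concavity of $G$ in $p$. The uniform $(\Gamma,A,G)$-convexity hypothesis \eqref{uniform convexity} is precisely what makes the $d$-part of $\Psi$ work: a direct computation of $\mathcal Ld$ in $\mathcal N$ produces the $A$-curvature matrix $K_A[\partial\Omega]$ of \eqref{1.6}, and \eqref{uniform convexity} together with F5 (or F5\textsuperscript{+} when $B$ depends on $p$) supplies a sufficiently negative principal contribution. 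Comparison then gives $v\le\Psi$ in $\mathcal N$, and Hopf's inequality at $x_0$ produces $\partial_\nu v(x_0)\le\partial_\nu\Psi(x_0)$, of the desired size.

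The main obstacle is balancing these competing terms. The mixed term $F^{ij}\beta^k_iu_{kj}$ is absorbed by an $\epsilon$-Young estimate against a quadratic form of the type $F^{ij}u_{ki}u_{kj}$, and this absorption is exactly the origin of the factor $\epsilon M_2$ in the statement. In the quasilinear case $G_{pp}\equiv 0$ so no further structural hypothesis is needed. In the general non-linear case, comparing $F^{ij}u_{ki}u_{kj}$ with $\mathscr T$ forces the use of F6, which provides $\mathcal E_2=F^{ij}w_{ik}w_{jk}\le o(|r|)\mathscr T$ and thereby prevents super-linear blow-up in $M_2$ of the quadratic-in-$D^2u$ terms; the concavity $G_{pp}\le 0$ then gives these terms the correct sign. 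The alternative assumption ``F5\textsuperscript{+} or $B$ independent of $p$'' is used to ensure that the $D^2_pB$ contribution arising when one also differentiates \eqref{1.1} twice does not spoil the barrier inequality, paralleling the dichotomy already exploited in the proof of Theorem~\ref{Th1.1}.
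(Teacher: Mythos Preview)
Your barrier skeleton is right, but the core mechanism in the quasilinear case is missing. You propose to absorb the mixed term $F^{ij}\tilde\beta_{ik}u_{jk}$ (with $\tilde\beta_{ik}=2\tilde D_{x_i}D_{p_k}G+(D_zG)\delta_{ik}$) by an $\epsilon$-Young inequality against $F^{ij}u_{ik}u_{jk}$. But with $v=G(\cdot,u,Du)$ alone, no such positive quadratic form is available: in the quasilinear case $G_{pp}=0$, and F6 is not assumed. Cauchy--Schwarz gives $|F^{ij}\tilde\beta_{ik}u_{jk}|\le C\mathscr T^{1/2}(F^{ij}u_{ik}u_{jk})^{1/2}$, and the second factor is uncontrolled. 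The paper's device is to replace $v$ by
\[
\bar v=G(\cdot,u,Du)+\tfrac{a}{2}\,|Du-Du(x_0)|^2,
\]
with $0<a\le 1$ to be chosen. The added term manufactures $aF^{ij}u_{ik}u_{jk}$ in $L\bar v$, which absorbs the mixed term by Cauchy at the cost $\frac{C}{a}\mathscr T$. This is also precisely where $M_2'$ enters the estimate: on $\partial\Omega$ one uses the tangential bound \eqref{mixed derivative estimate} to get $|Du(x)-Du(x_0)|\le C(1+M_2')|x-x_0|$, hence $|Du-Du(x_0)|^2\le C(1+(M_2')^2+M_2)(|x-x_0|^2+d)$ in a boundary strip, which forces the barrier coefficient $b$ to be of size $a(1+(M_2')^2+M_2)$. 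The final choice $a=(1+\epsilon_1 M_2)^{-1}$ balances the $\frac{C}{a}$ interior cost against the $aM_2$ boundary cost, producing $\epsilon M_2+C_\epsilon(1+M_2')$. Your account attributes $\epsilon M_2$ to the Young split of the mixed term; in fact it comes from this coupling of $a$ with the boundary behaviour of $|Du-Du(x_0)|^2$, together (when $B$ depends on $p$) with F5\textsuperscript{+} to handle $(D_{p_k}G)(D_{p_l}B)u_{kl}$.

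Two further points. First, concavity of $G$ in $p$ is \emph{not} a hypothesis of Lemma~\ref{Lemma 2.2}; in the non-quasilinear case the quadratic term $F^{ij}(D_{p_kp_l}G)u_{ik}u_{jl}$ has no sign and must be controlled through F6 via $|F^{ij}(D_{p_kp_l}G)u_{ik}u_{jl}|\le C\mathcal E_2+C\mathscr T\le o(|r|)\mathscr T$. Second, the uniform $(\Gamma,A,G)$-convexity in \eqref{uniform convexity} is only assumed where $G(x,z,p)\ge 0$, so the barrier comparison must be carried out in $\Omega_\rho^+:=\Omega_\rho\cap\{G(\cdot,u,Du)>0\}$, not in all of $\Omega_\rho$; on the portion of $\partial\Omega_\rho^+$ where $G=0$ one then compares $\bar v=\frac{a}{2}|Du-Du(x_0)|^2$ with $\bar\phi$, which is exactly the boundary estimate above.
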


For any function $g\in C^2(\bar \Omega\times \mathbb{R}\times \mathbb{R}^n)$ and linearized operator $L$ in \eqref{linearized operators}, by calculation we have
\begin{equation}\label{formula for Lv}
\begin{array}{rl}
Lg(\cdot,u,Du) = \!\!&\!\! \displaystyle  F^{ij}(D_{p_kp_l}g) u_{ik}u_{jl} +  F^{ij}(D_{x_ix_j} - A_{ij}^kD_{x_k})g + (D_zg) Lu +  (D_{p_k}g) Lu_{k} \\
               \!\!&\!\! \displaystyle  +  F^{ij}[2(\tilde D_{x_i}D_{p_k}g)u_{jk} + 2(\tilde D_{x_i}D_zg) u_j - (D_{zz}g)u_iu_j],
\end{array}
\end{equation}
where
\begin{equation}\label{L u}
Lu = F^{ij}\delta_{ik}u_{jk} - F^{ij}A_{ij}^ku_k,
\end{equation}
and
\begin{equation}\label{L uk}
Lu_k = F^{ij}\tilde D_{x_k}A_{ij} + \tilde D_{x_k}B + (D_{p_l}B)u_{kl}.
\end{equation}
Plugging \eqref{L u}, \eqref{L uk} into \eqref{formula for Lv}, we obtain the differential inequality,
\begin{equation}\label{L g}
\begin{array}{ll}
Lg  \!\!&\!\! \displaystyle \ge  F^{ij}(D_{p_kp_l}g) u_{ik}u_{jl} - C \mathscr{T} + F^{ij}\tilde \beta_{ik}u_{jk} + (D_{p_k}g)(D_{p_l}B)u_{kl},
\end{array}
\end{equation}
 in $\Omega$, with
$$\tilde \beta_{ik}:= 2 \tilde D_{x_i}D_{p_k}g+(D_zg)\delta_{ik},$$
where the constant $C$ depends on $n$, $\Omega$, $|g|_{C^2}$, $|A|_{C^1}$, $|B|_{C^1}$ and $|u|_{1;\Omega}$. Note that  when $a_0$ is finite, \eqref{L u} can be estimated directly from \eqref{homogeneity}.

\begin{proof}[Proof of Lemma \ref{Lemma 2.2}.]
For any fixed boundary point $x_0\in \partial \Omega$, we consider the function
\begin{equation}\label{bar v}
\bar v=G(\cdot,u,Du) + \frac{a}{2} |Du-Du(x_0)|^2,
\end{equation}
where $G$ is the boundary function in \eqref{1.2}, and $a\le1$ is a positive constant. We consider the quasilinear case of $\mathcal G$, \eqref{quasilinear}, namely $G_{pp}=0$. We also consider the case when F5\textsuperscript{+} holds. By \eqref{L g}, Cauchy's inequality and \eqref{L uk}, we have
\begin{equation}\label{L bar v}
\begin{array}{ll}
L \bar v \!\!&\!\!\displaystyle \ge - C \mathscr{T}  + F^{ij}\tilde \beta_{ik}u_{jk} + (D_{p_k}G)(D_{p_l}B)u_{kl} + a F^{ij}u_{ik}u_{jk}+ a(u_k-u_k(x_0))Lu_k\\
              \!\!&\!\!\displaystyle \ge - C(1+a)\mathscr{T} - \frac{1}{a} F^{ij}\tilde \beta_{ik}\tilde \beta_{jk} + [D_{p_k}G+a(u_k-u_k(x_0))](D_{p_l}B)u_{kl} \\
              \!\!&\!\!\displaystyle \ge - [\frac{C}{a}+ (\epsilon_1 M_2+C_{\epsilon_1})]\mathscr{T},\quad\quad   {\rm in} \ \Omega,
\end{array}
\end{equation}
for any $\epsilon_1 >0$, where $g$ is replaced by $G$ in $\tilde \beta_{ik}$, F5\textsuperscript{+} is used in the last inequality, the constants $C$ depend on $n$, $\Omega$, $|G|_{C^2}$, $|A|_{C^1}$, $|B|_{C^1}$ and $|u|_{1;\Omega}$, and the constant $C_{\epsilon_1}$ depends on $\epsilon_1, F$ and $B$.

We shall construct a suitable upper barrier for $\bar v$ at the point $x_0$. We employ a function of the form
\begin{equation}\label{upper barrier}
\bar \phi = \phi + \frac{b}{2}|x-x_0|^2, \quad {\rm in}\ \Omega_{\rho},
\end{equation}
with
\begin{equation}
\phi= c(d-td^2),
\end{equation}
where $d=d(x)={\rm dist}(x,\partial\Omega)$, $\Omega_{\rho} = \{x\in \Omega | \ d(x)<\rho\}$, $b$, $c$, $t$ and $\rho$ are positive constants to be determined.
Then, by calculation, we have
\begin{equation}\label{L barrier 1}
\begin{array}{ll}
\displaystyle \frac{1}{2td-1}L \phi \!\!&\!\!\displaystyle = F^{ij}[c(-D_{ij}d+A^k_{ij}D_kd+\frac{2t}{1-2td}D_idD_jd)]\\
                 \!\!&\!\!\displaystyle \ge F^{ij}[c(-D_{ij}d+A^k_{ij}D_kd+2t D_idD_jd)], \quad\quad  {\rm in}\ \  \Omega_{\rho},
\end{array}
\end{equation}
provided $t\rho\le 1/4$. For convenience, we denote $h_{ij}=-D_{ij}d+A^k_{ij}D_kd+2t D_idD_jd$ in \eqref{L barrier 1}. By the uniform $(\Gamma, A, G)$-convexity of $\partial\Omega$, since $|u|$ and $|Du|$ are bounded in $\Omega$, there exists a small positive constant $\sigma$ such that
\begin{equation}
K_A[\partial\Omega](x,u,Du) + \mu_0 \nu(x)\otimes \nu(x) -2 \sigma I \in \Gamma,
\end{equation}
for all $x\in \partial\Omega$ satisfying $G(x,u(x),Du(x))\ge 0$. Reversing the projection onto the tangent space of $\partial\Omega$, we then have
\begin{equation}
-D\nu(x)+D_pA(x,u,Du)\cdot \nu(x) + \tilde \mu_0 \nu(x)\otimes \nu(x) -2\sigma I \in \Gamma,
\end{equation}
for all $x\in \partial\Omega$, for a  larger constant $\tilde \mu_0$, which implies $(h_{ij}-\sigma \delta_{ij})\in \Gamma$ for $x\in \partial\Omega$ provided $t\ge \tilde \mu_0$. By choosing $\rho$ sufficiently small and then $t$ sufficiently large, we have $(h_{ij}-\sigma \delta_{ij})\in \Gamma$ in
$\Omega_{\rho}^+ = \Omega_{\rho} \cap \{G(\cdot,u,Du) > 0\}$. Note that the constants $\rho$ and $t$ should be chosen under the restriction $t\rho\le 1/4$. Then the constants $\rho$ and $t$ are now fixed. Then, by the concavity F2, we have from \eqref{L barrier 1},
\begin{equation}\label{L barrier 2}
\begin{array}{ll}
\displaystyle \frac{1}{2t d-1}L \phi \!\!&\!\!\displaystyle \ge F^{ij}[c(h_{ij})]\\
   \!\!&\!\!\displaystyle = c\sigma\mathscr{T} + F^{ij}[c(h_{ij}-\sigma \delta_{ij})]\\
   \!\!&\!\!\displaystyle \ge c\sigma\mathscr{T} + F(c(h_{ij}-\sigma \delta_{ij}))-F(w_{ij})+F^{ij}w_{ij}\\
   \!\!&\!\!\displaystyle \ge c\sigma\mathscr{T} + F(c(h_{ij}-\sigma \delta_{ij}))-B(\cdot,u,Du),\quad\quad {\rm in}\ \  \Omega_{\rho}^+,
\end{array}
\end{equation}
where \eqref{1.1} and \eqref{homogeneity} are used in the last inequality. By using F4 with sufficiently large $c$, we have from \eqref{L barrier 2}
\begin{equation}\label{L barrier 3}
\begin{array}{ll}
\displaystyle L \phi \!\!&\!\!\displaystyle \le -\frac{1}{2}c\sigma\mathscr{T}, \quad\quad {\rm in}\ \  \Omega_{\rho}^+,
\end{array}
\end{equation}
where $2td-1\le -1/2$ in $\Omega_{\rho}^+$ is used.
Thus, we obtain, from \eqref{L bar v},
\begin{equation}\label{Lv ge Lphi}
L \bar \phi \le (-\frac{1}{2}c\sigma+Cb) \mathscr{T} \le  L \bar v, \quad\quad {\rm in}\ \  \Omega_{\rho}^+,
\end{equation}
provided $c\ge 2[C(b+\frac{1}{a}) + (\epsilon_1 M_2+C_{\epsilon_1})]/\sigma$.

Next, we examine $\bar v$ and $\bar \phi$ on the boundary of $\Omega_{\rho}$. For $x\in \partial \Omega$, we have
\begin{equation}
\begin{array}{ll}
|Du(x)-Du(x_0)| \!\!&\!\! \displaystyle \le C (\sup_{\partial\Omega}\sup_{|\tau|=1,\tau\cdot \nu=0} |D_\tau Du|) |x-x_0| \\
                \!\!&\!\! \displaystyle \le C (1+M^\prime_2)|x-x_0|,
\end{array}
\end{equation}
where the mixed derivative estimate \eqref{mixed derivative estimate} and the strict obliqueness \eqref{1.3} are used in the second inequality, so the constant $C$ depends also on $\beta_0$. For $x\in \Omega_{\rho}$ and $x^\prime$ the closest point on $\partial \Omega$, we then obtain,
\begin{equation}
\begin{array}{ll}
|Du(x)-Du(x_0)|^2 \!\!&\!\! \displaystyle \le 4(\sup|Du|)M_2d(x) + 2|Du(x^\prime)-Du(x_0)|^2\\
                \!\!&\!\! \displaystyle \le C (1+(M^\prime_2)^2 + M_2) (|x-x_0|^2 +d),
\end{array}
\end{equation}
so that
\begin{equation}\label{boundary inequality 1}
\bar v \le \frac{1}{2}a C (1+(M^\prime_2)^2 + M_2)(|x-x_0|^2 + d) \le \bar \phi, \quad {\rm on} \ \bar\Omega_{\rho}\cap\{G(\cdot,u,Du) = 0\},
\end{equation}
by choosing $b=aC (1+(M^\prime_2)^2 +M_2)$ and $c \ge b$. On the inner boundary, by choosing $c\ge C/\rho$, we have
\begin{equation}\label{boundary inequality 2}
\bar v \le \bar \phi, \quad {\rm on} \ \partial\Omega_{\rho} \cap \Omega.
\end{equation}
Now from \eqref{Lv ge Lphi}, \eqref{boundary inequality 1} and \eqref{boundary inequality 2}, by the comparison principle, we have
\begin{equation}
\bar v \le \bar \phi, \quad {\rm in} \ \Omega_{\rho}^+.
\end{equation}
Since $\bar v(x_0)= \bar \phi(x_0)=0$, we have
\begin{equation}
D_\beta \bar v(x_0) \le D_\beta \bar \phi(x_0),
\end{equation}
which implies
\begin{equation}\label{double oblique at x0}
u_{\beta\beta} (x_0) \le \beta^0 c+C,
\end{equation}
where $\beta^0:=\sup\limits_{\partial\Omega}(G_p(\cdot, u, Du)\cdot \nu)\ge \beta_0$.
We can fix the constant $c$ so that
\begin{equation}\label{choice of c}
\begin{array}{ll}
c \!\!&\!\! \displaystyle \le\frac{2[C(b+\frac{1}{a})+(\epsilon_1 M_2+C_{\epsilon_1}) ]}{\sigma} + b + \frac{C}{\rho}\\
  \!\!&\!\! \displaystyle \le C [(\epsilon_1+a)M_2+a(M^\prime_2)^2 +\frac{1}{a}] + C_{\epsilon_1},
\end{array}
\end{equation}
where $C$ now depends on $F, A, B, G, \Omega, \beta_0$ and $|u|_{1;\Omega}$, and $C_{\epsilon_1}$
depends additionally on $\epsilon_1$. For any $\epsilon >0$, taking $a=\frac{1}{1+\epsilon_1M_2}$
 and $\epsilon_1= \frac{\epsilon}{\beta^0C}$ for a further constant $C$ in \eqref{choice of c}, we then get
\begin{equation}\label{pure oblique at x0}
u_{\beta\beta} (x_0) \le \epsilon M_2 +C_\epsilon(1+M^\prime_2)
\end{equation}
from \eqref{double oblique at x0} and \eqref{choice of c}. Since $x_0$ is any boundary point, we can take the supremum of \eqref{pure oblique at x0} over $\partial\Omega$ to arrive at the desired estimate \eqref{pure oblique estimate}. Therefore, we have proved Lemma \ref{Lemma 2.2} in the case when $\mathcal{G}$ is quasilinear and F5\textsuperscript{+} holds. While in the case $\mathcal{G}$ is quasilinear and only F5 holds with $B$ independent in $p$, the last term in the second line of \eqref{L bar v} does not appear. So we still arrive at the same estimate \eqref{pure oblique estimate} and Lemma \ref{Lemma 2.2} is thus proved in the quasilinear case.

Next, we turn to the case that $F$ satisfies F6. Here we may simply take $a=0$ in \eqref{bar v}  and $b = 0$ in
\eqref{upper barrier}  so that from \eqref{L g}, F6 and Cauchy's inequality
\begin{equation}
\begin{array}{rl}
L \bar v \ge \!\!&\!\! \displaystyle  F^{ij}(D_{p_kp_l}G)u_{ik}u_{jl}- C \mathscr{T}   +  (D_{p_k}G)(D_{p_l}B)u_{kl} + 2  F^{ij}\tilde \beta_{ik}u_{jk}\\
                   \ge \!\!&\!\! \displaystyle  -(\epsilon_1M_2 + C_{\epsilon_1})\mathscr{T},\quad\quad   {\rm in} \ \Omega,
\end{array}
\end{equation}
for any $\epsilon_1 >0$, where $\epsilon_1$ now comes from the use of both  F5\textsuperscript{+} and F6, the constant $C$ depends on $n$, $\Omega$, $|G|_{C^2}$, $|A|_{C^1}$, $|B|_{C^1}$ and $|u|_{1;\Omega}$, and the constant $C_{\epsilon_1}$ depends also on ${\epsilon_1}$ and $F$.  We can then derive the desired estimate \eqref{pure oblique estimate}, without the dependence on $M^\prime_2$, for both F5\textsuperscript{+} and
$B$ independent of $p$.
\end{proof}

\begin{remark}\label{Remark 2.1}
The proof of Lemma \ref{Lemma 2.2} readily gives us  a local boundary estimate when we only assume $\partial\Omega\cap B$  is uniformly $(\Gamma,A,G)$-convex with respect to $u$ for some ball $B = B_R(x_0)$ of radius $R$, centred at $x_0$. Under the hypotheses of Lemma \ref{Lemma 2.2}, with $\Omega$ replaced by $\Omega\cap B$ and $\partial\Omega$ replaced by
$\partial\Omega\cap B$, we then obtain, in place of  \eqref{pure oblique estimate},
\begin{equation}\label{pure local oblique estimate}
 u_{\beta\beta}(x_0)\le \epsilon M_{2;\Omega\cap B} +C_\epsilon(1 + R^{-2}+M^\prime_{2;\partial\Omega\cap B})  ,
\end{equation}
where now $C_\epsilon $ is a constant depending on $\epsilon, F, A, B, G, \Omega, \beta_0, $ and $|u|_{1;\Omega\cap B}$
and
$$ M_{2;\Omega\cap B} = \sup\limits_{\Omega\cap B} |D^2 u|, \quad M^\prime_{2;\partial\Omega\cap B} = \sup\limits_{\partial\Omega\cap B} \sup\limits_{|\tau| = 1, \tau\cdot\nu=0} |u_{\tau\tau}|.$$
 \end{remark}

%%%%%%%%%%%%%%%%%%%%%%%%%%%%%%%%%%%%%%%%%%%%%%%%%%%%%%%%%%%%%%%%%%%%%%%%%%%%%%%%%%%%%%%%%%%%%%
It now remains to estimate the pure tangential derivatives on the boundary. In this part, the strictly regular condition on the matrix $A$ is crucial.  We can formulate the pure tangential derivative estimates as follows.

\begin{Lemma}\label{Lemma 2.3}
Let $u\in C^2(\bar \Omega)\cap C^4(\Omega)$ be an admissible solution of the boundary value problem \eqref{1.1}-\eqref{1.2} in a $C^{2,1}$ domain $\Omega \subset \mathbb{R}^n$. Assume that $F, A$ and $B$ satisfy the hypothesis of Theorem \ref{Th1.1} and $G\in C^2(\partial\Omega\times \mathbb{R}\times \mathbb{R}^n)$ is oblique and concave in $p$ with respect to $u$ satisfying \eqref{1.3}, and either $\mathcal{G}$ is quasilinear or $\mathcal F$ is
orthogonally invariant or $F$ also satisfies condition F6. Then for any  tangential vector field $\tau$, $|\tau|\le 1$ and constant
$\epsilon > 0$, we have the estimate
\begin{equation}\label{pure tangential}
M_2^+(\tau) \le \epsilon M_2 + C_\epsilon,
\end{equation}
where $M_2^+(\tau)=\sup\limits_{\partial\Omega}u_{\tau\tau}$, and $C_\epsilon$ is a constant depending on $\epsilon, F, A, B, G, \Omega, \beta_0$ and $|u|_{1;\Omega}$.
\end{Lemma}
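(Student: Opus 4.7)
The plan is to establish \eqref{pure tangential} by a boundary maximum principle argument analogous to, but considerably more delicate than, the one used for Lemma~\ref{Lemma 2.2}, since the quantity to be controlled now involves purely tangential second derivatives. First, I would let $x_0\in\partial\Omega$ be a point at which $u_{\tau\tau}$ attains its boundary maximum $M_2^+(\tau)$, fix local coordinates with $\nu(x_0)$ in the inner-normal direction, and extend the given tangential field $\tau$ smoothly into a neighborhood $\mathcal N$ of $x_0$, keeping $\tau$ tangential to $\partial\Omega$. The precise extension depends on which branch of the trichotomy applies: in the orthogonally invariant case I would arrange the Jacobian $D\tau$ to be skew-symmetric (so that Lemma~\ref{Lemma 2.1} is applicable), in the quasilinear case any smooth extension suffices, and under F6 the first derivatives of $\tau$ will be absorbed directly as in the final step.

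Next I would apply the linearized operator $\mathcal L$ from \eqref{linearized operators} to an auxiliary function of the form
\begin{equation*}
v \;=\; u_{\tau\tau} \,+\, \tau^l\tau^k_l u_k \,+\, \tfrac{1}{2}c_1|u_\tau|^2 \,+\, \bar\phi,
\end{equation*}
where $\bar\phi$ is a barrier modelled on \eqref{upper barrier} in the proof of Lemma~\ref{Lemma 2.2}. Using the second-order identity \eqref{2.7}, the strictly regular bound \eqref{critical inequality}, and either the skew-symmetry identities \eqref{2.16}--\eqref{2.18} of Lemma~\ref{Lemma 2.1} (in the orthogonally invariant case) or condition F6 (to bound the offending $F$-weighted third-derivative and Jacobian contributions), I would obtain at an interior maximum of $v$ a differential inequality of the form
\begin{equation*}
0 \;\ge\; \mathcal L v \;\ge\; c_0\mathscr T\,|Du_\tau|^2 \,-\, C(1+\mathscr T)(1+|Du_\tau|) \,-\, (\epsilon_1 M_2+C_{\epsilon_1})\mathscr T,
\end{equation*}
in $\Omega\cap\mathcal N$. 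Cauchy's inequality together with the choice of $\bar\phi$ then precludes $v$ from attaining a large maximum in $\bar\Omega\cap\mathcal N$.

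The crux is the boundary analysis. Doubly differentiating the boundary condition $G(\cdot,u,Du)=0$ along the tangential field $\tau$ on $\partial\Omega$ yields an identity of the form
\begin{equation*}
G_{p_k}D_k(u_{\tau\tau}) \;=\; -G_{p_kp_l}u_{k\tau}u_{l\tau} \,+\, O(1+|u_{\tau\tau}|).
\end{equation*}
The concavity hypothesis $G_{pp}\le 0$ makes the leading right-hand term nonnegative, while the obliqueness $\beta\cdot\nu\ge\beta_0>0$ converts $G_{p_k}D_k(u_{\tau\tau})$ into a controlled upper bound for the inner-normal derivative of $u_{\tau\tau}$. Since $x_0$ is a boundary maximum of $u_{\tau\tau}$, the tangential derivatives of $u_{\tau\tau}$ along $\partial\Omega$ at $x_0$ vanish, so combining the interior inequality, the barrier, and this boundary derivative estimate through a Hopf-type argument at $x_0$ forces $u_{\tau\tau}(x_0)\le \epsilon M_2 + C_\epsilon$, as required.

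The main obstacle is controlling the third-derivative terms $F^{ij}\tau^k_i u_{jk\tau}$ and the Jacobian contributions $F^{ij}\tau^k_i\tau^l_k w_{jl}$ that appear in \eqref{2.7} because $\tau$ cannot be taken constant while remaining tangential to $\partial\Omega$. It is precisely to handle these that the trichotomy in the hypothesis is introduced: the skew-symmetry of $D\tau$ together with the identities \eqref{2.16}--\eqref{2.18} cancel the third-derivative terms in the orthogonally invariant case; under F6 the residual $F$-weighted bilinear forms are of order $o(|w|)\mathscr T$ and absorb into $\epsilon M_2\mathscr T$; and in the quasilinear case the vanishing of $G_{pp}$ simplifies the doubly-differentiated boundary identity enough that a modified auxiliary function can bypass these higher-order terms altogether. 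Throughout, the strictly regular lower bound \eqref{strict regularity}, producing the positive $c_0\mathscr T\,|Du_\tau|^2$ contribution, is indispensable for absorbing the quadratic cross-terms in $Du_\tau$ that otherwise obstruct closure of the estimate.
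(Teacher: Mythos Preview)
Your plan has the right high-level ingredients (interior differential inequality from strict regularity, double tangential differentiation of $G=0$ with concavity, a defining-function barrier), but it misses the central structural device of the paper's proof, and as written the argument would not close in the quasilinear and orthogonally invariant cases.

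The difficulty you correctly identify is the tension between two requirements on the vector field: for the interior inequality you need $\tau$ constant (or with skew-symmetric Jacobian), while for the boundary comparison you need $\tau$ tangential to $\partial\Omega$. Your proposal tries to satisfy both by extending $\tau$, but these are genuinely incompatible except at a single point. The paper resolves this differently: it maximises $v_\tau = u_{\tau\tau} + \tfrac{c_1}{2}|u_\tau|^2$ over $\partial\Omega$ \emph{and over all tangential directions simultaneously}, obtaining a maximising point $0$ and direction $e_1$. It then works with the \emph{constant} vector $e_1$ in the interior (so the Jacobian terms in \eqref{2.7} vanish and the argument reduces to Theorem~\ref{Th1.1}), and handles the failure of tangentiality on $\partial\Omega$ away from $0$ by the decomposition $e_1 = \tau + b\beta$ with $b=\nu_1/(\beta\cdot\nu)$, $\tau$ tangential. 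The maximality over tangential directions gives $v_\tau \le v_1(0)$ on $\partial\Omega$, which after subtracting $f\,\tilde v_1(0)$ (with $f\ge|\tau|^2$) and a defining-function term $K(1+M_2)\phi$ forces an \emph{interior} maximum, where the estimate $u_{11}\le C\sqrt{1+M_2}$ follows. In the quasilinear case $\beta=\beta(\cdot,u)$, so $|\tau|^2$ is controlled by a function $f$ independent of $Du$; this is where quasilinearity enters, not (as you suggest) in simplifying the interior computation. For nonlinear $G$ with $\mathcal F$ orthogonally invariant, $e_1$ is replaced by the affine field $\xi$ with skew-symmetric Jacobian approximating $e_1$ to first order tangentially, so that both $b(0)=0$ and $\delta b(0)=0$; Lemma~\ref{Lemma 2.1} then handles the interior. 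Only in the F6 case does the paper allow a genuinely $Du$-dependent $f$, since F6 directly controls $\mathcal E_2$.

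Two further points: your barrier $\bar\phi$ modelled on \eqref{upper barrier} is not what is used here; the paper employs simply $K(1+M_2)\phi$ with $\phi$ a negative defining function, adjusted so $|\phi|$ is small. And your Hopf-type description is inverted: the oblique-derivative bound from the twice-differentiated boundary condition is used to show the auxiliary function has positive inner derivative at $0$, hence \emph{cannot} attain its maximum there, forcing the maximum into the interior where the strict-regularity inequality finishes the job.
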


\begin{proof}
As usual we extend $\nu$ and $G$ smoothly to all of $\bar\Omega$ so that $\nu$ and $G(\cdot, z, p)$ are constant along normals to $\partial\Omega$ in some neighbourhood of $\partial\Omega$. Suppose that the function
\begin{equation}\label{2.50}
v_\tau = u_{\tau\tau} + \frac{c_1}{2} |u_\tau|^2
\end{equation}
takes a maximum over $\partial\Omega$ and tangential vectors $\tau$, such that $|\tau|\le 1$, at a point $x_0 \in \partial\Omega$ and vector $\tau=\tau_0$, where $c_1$ is the constant in the strict regularity condition \eqref{strict regularity}. Without loss of generality, we may assume $x_0=0$ and $\tau_0=e_1$. Setting
\begin{equation}\label{b}
b=\frac{\nu_1}{\beta\cdot \nu}, \quad\quad \tau=e_1 - b\beta,
\end{equation}
we then have, at any point in $\partial\Omega$,
\begin{equation}
v_1 = v_\tau + b(2u_{\beta\tau}+c_1u_\beta u_\tau)+ b^2 (u_{\beta\beta}+\frac{c_1}{2}u_\beta^2),
\end{equation}
with $v_1(0)=v_\tau (0)$, $b(0)=0$ and $\tau(0)=e_1$. From \eqref{tangen diff the bc}, $u_{\beta\tau}=-\tilde D_{x_\tau}G$ on $\partial\Omega$ so that setting
$$g=\frac{1}{\beta\cdot \nu} (2u_{\beta\tau}+c_1u_\beta u_\tau),$$
we have
$$|g-g(0)|\le C(1+M_2)|x|,\quad \quad {\rm on}\ \partial\Omega,$$
where $C$ is a constant depending on $G, \Omega, \beta_0$ and $|u|_{1;\Omega}$. Accordingly, there exists a further constant $C_1$ depending on the same quantities, such that the function,
\begin{equation}\label{2.51}
\tilde v_1 = v_1 - g(0)\nu_1 - C_1(1+ M_2)|x|^2
\end{equation}
satisfies
\begin{equation}\label{2.52}
\begin{array}{ll}
\tilde v_1 \!\!&\!\!\displaystyle \le |\tau|^2 \tilde v_1(0)\\
           \!\!&\!\!\displaystyle \le f\tilde v_1(0),\quad \quad {\rm on}\ \partial\Omega,
\end{array}
\end{equation}
where $f$ is any non-negative function in $C^2(\bar \Omega)$ satisfying $f\ge |\tau|^2$ on $\partial\Omega$, $f(0)=1$. In the case when $\mathcal{G}$ is quasilinear, that is $\beta=\beta(\cdot,u)$, we may simply estimate
\begin{equation} \label{tau}
\begin{array}{ll}
|\tau|^2   \!\!&\!\!\displaystyle \le 1-2b\beta_1 + b^2\sup |\beta|^2\\
           \!\!&\!\!\displaystyle \le 1-2\frac{\beta_1}{\beta\cdot \nu}(0)\nu_1 + C_1|x|^2 : = f.
\end{array}
\end{equation}
Now differentiating \eqref{1.2} twice in a tangential direction $\tau$, with $\tau(0)=e_1$, we obtain using the concavity of $G$ and \eqref{mixed derivative estimate},
\begin{equation}\label{2.56}
D_\beta u_{11}(0) \ge - C_1 (1+ M_2).
\end{equation}
Consequently for a sufficiently large constant $K$ depending on the same quantities as $C_1$, the function
\begin{equation}\label{2.57}
v= \tilde v_1 - f\tilde v_1(0) - K(1+ M_2)\phi
\end{equation}
must take an interior maximum in $\Omega$, where $\phi \in C^2(\bar \Omega)$ is a negative defining function for $\Omega$ satisfying $\phi=0$ on $\partial\Omega$, $D_\nu \phi=-1$ on $\partial\Omega$. This effectively reduces our argument to the proof of Theorem \ref{Th1.1}. In the proof of Theorem \ref{Th1.1}, by replacing the function $v_1$ in \eqref{2.8} with the function $v$ in \eqref{2.57}, we obtain at a maximum point $x_0$, using \eqref{strict regularity},
\begin{equation} \label {interior est}
u_{11}(x_0) \le C \sqrt{1+ M_2},
\end{equation}
where $C$ depends additionally on $A$ and $K|\phi|_{2;\Omega}$. The estimate \eqref{pure tangential} then follows by fixing $\phi$ and the constant $C_1$ in \eqref{tau} so that $\phi\ge -\frac{\epsilon}{4K}$ and $f\ge\frac{1}{2}$  in $\Omega$. Instead of adjusting $\phi$ we can alternatively maximize $v$ in a sufficiently small strip $\Omega_{\delta_0}$ around $\partial\Omega$ and apply the interior estimate in Theorem \ref{Th1.1}.

When $G$ is nonlinear in $p$, the coefficient $\beta_1$ in  the expansion \eqref {tau} of $|\tau|^2$ depends on $Du$ and cannot be controlled by the argument above. For orthogonally invariant $\mathcal F$, this is overcome by using a first order approximation to the tangent vector $e_1$ at $0$. Fixing the $x_n$ coordinate in the direction of $\nu$ at $0$, we then replace $e_1$ by the vector field
\begin{equation}\label{tang approx}
\xi = e_1 + \sum_{1\leq k< n}\delta_k\nu_1(0)(x_ne_k -x_ke_n).
\end{equation}
Then in place of \eqref{b}, we have
$$ b=\frac{\xi\cdot\nu}{\beta\cdot \nu}, \quad\quad \tau=\xi - b\beta,$$
so that, both $b(0)= 0$, $\delta b(0) = 0$. Accordingly we then have, in place of \eqref{2.51} and \eqref{2.52},
\begin{equation}\label{2.60}
\tilde v_1: = v_\xi  - C_1(1+ M_2)|x|^2 \le \tilde v_1(0)(1+C_1|x|^2),\quad \quad {\rm on}\ \partial\Omega,
\end{equation}
where $C_1$ again denotes constants depending on  $G, \Omega, \beta_0$ and $|u|_{1;\Omega}$. Comparing  the forms of 
$\tilde v_1$ in \eqref{2.51} and \eqref{2.60}, since $b(0)= 0$ and $\delta b(0) = 0$, here we can avoid the term $g(0)\nu_1$ in \eqref{2.60}. The inequality in \eqref{2.60} is obtained by estimating $|\tau|^2 = |\xi|^2-2b(\xi\cdot\beta)+b^2|\beta|^2$, 
with $|\xi|^2 \le 1 + 2 \delta_1\nu_1(0)x_n + \sum\limits_{1\le k<n}(\delta_k \nu_1(0))^2 |x|^2$ on $\partial\Omega$,  
$-2b(\xi\cdot\beta)+b^2|\beta|^2 \le C_1|x|^2$ on $\partial\Omega$, (since $b(0)=0$ and $\delta b(0)=0$), and using 
$\nu(0) = e_n$  to estimate $x_n$ on $\partial\Omega$.  Since the vector field $\xi$ has skew symmetric Jacobian $D\xi$, we can then reduce to the argument of the proof of  Lemma \ref{Lemma 2.1} when $\mathcal F$ is orthogonally invariant. 
The reduction is achieved by replacing the function $v$ in \eqref{2.11}, (where $u_{\tau\tau}$ is replaced by $(u_\tau)_\tau$ in the proof of Lemma \ref{Lemma 2.1}), with the function $v$ in \eqref{2.57}, (where $\tilde v_1$ is defined in \eqref{2.60}, and $f$ is any non-negative function in $C^2(\bar\Omega)$ satisfying $f\ge 1+ C_1|x|^2$ on $\partial\Omega$).
Combining \eqref{2.19} with \eqref{2.6} and \eqref{2.7},
we would then, following our argument above, arrive at the estimate \eqref{interior est} at an interior maximum point
$x_0$, with $e_1$ replaced by $\xi$  and from there get the corresponding estimate for $u_{\xi\xi}(0) =u_{11}(0)$ and
conclude \eqref{pure tangential}, as before.

Finally if F6 is satisfied,  we first obtain  from the formulae \eqref{formula for Lv}, \eqref{L u} and \eqref{L uk}, the complementary inequality to \eqref{L g}
\begin{equation}
\begin{array}{ll}
\mathcal L g \!\!&\!\!\displaystyle \le C(\mathcal E_2 +\mathscr T +1)\\
                       \!\!&\!\!\displaystyle \le C (\epsilon M_2 + 1) (1+\mathscr T),
\end{array}
\end{equation}
for any $\epsilon >0$ and provided $|D^2u| \ge C_\epsilon$, where the terms $F^{ij}\tilde \beta_{ik}u_{jk}+(D_{p_k}g)(D_{p_l}B)u_{kl}$ in $\mathcal L g$ are treated in the same way as in \eqref{L bar v}. Here the constant $C$ depends on $\Omega$, $|g|_{C^2}$, $|A|_{C^1}$, $|B|_{C^1}$ and $|u|_{1;\Omega}$ while $C_\epsilon$ depends on $\epsilon, F$ and $|u|_{1;\Omega}$. Now taking $f = f(\cdot,u,Du)$ satisfying $f \ge|\tau|^2$ on $\partial\Omega$, $f\ge \frac{1}{2},\in C^2(\bar \Omega\times\mathbb{R}\times \mathbb{R}^n)$, we then obtain at the interior maximum point $x_0$ of the function $v$ in \eqref{2.57}, corresponding to \eqref{2.9},
\begin{equation}
\mathcal{L}v \ge c_0\mathscr{T}|Du_1|^2 - C(1+\mathscr{T})(1+|Du_1| + \epsilon M^2_2)  + \lambda_B |Du_1|^2,
\end{equation}
if $|u_{11}(x_0)| \ge C_\epsilon$. A suitable function $f$ for example is obtained by taking $f = |\tau|^2 +C_1|x|^2 $ in
$\Omega$ with $\beta\cdot\nu$ in \eqref{b} replaced by $\zeta(\beta\cdot\nu)$ where $\zeta\in C^2(\mathbb{R})$ satisfies $\zeta^\prime, \zeta^{\prime\prime} \ge 0$, $\zeta(t) = t$ for $t\ge 3\beta_0/4$, $\zeta(t) = \beta_0/2$ for $t\le \beta_0/4$ and $C_1$ is a large enough constant, depending on $G, \Omega, \beta_0$ and $|u|_{1;\Omega}$,  to ensure $f\ge \frac{1}{2}$. Also using F6 in conjunction with \eqref{strict regularity}, we only need take $c_1= 0$ in \eqref{2.50}. By suitably adjusting $\epsilon$, we then infer the estimate \eqref{pure tangential}.
\end{proof}

With the local/global second derivative estimate in Theorem \ref{Th1.1}, and the boundary estimates in \eqref{mixed derivative estimate}, Lemma \ref{Lemma 2.2} and Lemma \ref{Lemma 2.3}, we are now ready to prove the global second derivative estimate \eqref{1.10} in Theorem \ref{Th1.2}.
\begin{proof}[Proof of Theorem \ref{Th1.2}]
In the quasilinear case we have by hypothesis that the sum of any $n-1$ eigenvalues of $w = M[u]$  is nonnegative which implies that the quantities $M^\prime_2$ and $M^+_2$ are equivalent. Combining the estimates \eqref{mixed derivative estimate}, \eqref{pure oblique estimate} and \eqref{pure tangential}, we then get an estimate
\begin{equation}
\sup_{\partial\Omega}u_{\xi\xi}\le \epsilon M_2 + C_\epsilon,
\end{equation}
for any constant unit vector $\xi$ and constant  $\epsilon >0$ where $C_\epsilon$ depends on $\epsilon,F, A, B, G, \Omega, \beta_0$ and $|u|_{1;\Omega}$. Then using the concavity of $F$, as in the proof of Theorem \ref{Th1.1}, or the above property of $w$, we get the full boundary estimate
\begin{equation}\label{full boundary}
\sup_{\partial\Omega}|D^2u| \le \epsilon M_2 + C_\epsilon
\end{equation}
and the desired estimate \eqref{1.10} follows from the global second derivative bound in Theorem \ref{Th1.1} with
$\epsilon$ chosen sufficiently small. If F6 is satisfied, the term in $M^\prime _2$ does not occur in the estimate  \eqref{pure oblique estimate} and we obtain \eqref{full boundary} directly from the concavity of $F$, as in the proof of Theorem \ref{Th1.1}. This completes the proof of Theorem \ref{Th1.2}.
\end{proof}

\begin{remark}\label{Remark2.1}
Using the equivalence of $M^\prime_2$ and $M^+_2$ as above,  we may replace $M_2$ by $u_{11}(0)$ in \eqref{2.56} and \eqref{2.57}. Taking $\epsilon = 1$ in our adjustment of $\phi$ in the proof of Lemma \ref{Lemma 2.3}, we then obtain, in the cases when $\mathcal G$ is quasilinear or $\mathcal F$ is orthogonally invariant, a more precise version of the tangential estimate \eqref{pure tangential}
\begin{equation}
M_2^+(\tau) \le C(1+\sqrt{M_2}),
\end{equation}
where $C$ is a constant depending on $ F, A, B, G, \Omega, \beta_0$ and $|u|_{1;\Omega}$.
\end{remark}

%%%%%%%%%%%%%%%%%%%%%%%%%%%%%%%%%%%%%%%%%%%%%%%%%%%%%%%%%%%%%%%%%%%%%%%%%%%%%%%%%%%%%%%%%%%%%%%%%%%%%%%%%%%%%%%%%%%%%%%%%%%%%%%%%%%%%%%%%%%%%%%%%
\section{Gradient estimates}\label{Section 3}
\vskip10pt

In this section, we prove various gradient estimates for admissible solutions $u$ of the oblique problem \eqref{1.1}-\eqref{1.2}.
We mainly consider the case when the oblique boundary operator $\mathcal{G}$ is semilinear and in particular give the proof of Theorem \ref{Th1.3}. We also derive corresponding local gradient estimates as well as an estimate for nonlinear $\mathcal{G}$. As mentioned in the introduction, our conditions on either the matrix $A$ or the operator $F$ enable an analogue of  uniform ellipticity. Accordingly we will employ improvements of the methods for uniformly elliptic equations in \cite{LieTru1986} with a critical adjustment used to supplement the tangential gradient terms in \cite{LieTru1986},  which is similar to that used for gradient estimates in the conformal geometry case in \cite{JLL2007}. We also prove a H\"older estimate
for $\Gamma = \Gamma_k$ for $k >n/2$, from which we infer gradient estimates under natural quadratic growth conditions.

\subsection{Global gradient estimates}\label{subsection3.1}

We consider the case of oblique semilinear $\mathcal{G}$ in \eqref {semilinear} and normalise $G$ by dividing by $\beta\cdot \nu \ge \beta_0$, so that we can write
\begin{equation}\label{semilinear Section 4}
G(x,z,p) = \nu\cdot p  + \beta^\prime\cdot p^\prime  - \varphi (x,z) ,
\end{equation}
where now $\beta^\prime \in C^2(\partial\Omega)$, $\varphi \in C^2(\partial\Omega\times\mathbb{R})$ and $p^\prime = p - (p\cdot \nu)\nu$. For convenience, we still use $\varphi$ to denote its normalised form here. The boundary condition \eqref{1.2} can thus be written in the form
\begin{equation}\label{bc}
\mathcal G[u] = G(\cdot,u,Du) = D_\nu u  + \beta^\prime\cdot \delta u -\varphi (\cdot,u) = 0, \quad \mbox{on} \ \partial\Omega.
\end{equation}

We begin with a preliminary calculation to estimate  $\mathcal{L}g$ from below, for $g$ given by
\begin{equation}\label{quadratic g}
g(x,u,Du)= a_{kl}(x)u_ku_l +  b_k(x,u)u_k + c(x,u),
\end{equation}
where $\{a_{kl}\}$ is a nonnegative  matrix function on $\Omega$, $b(x,u)=(b_1(x,u),\cdots,b_n(x,u)) $ is a vector valued function on $\Omega \times \mathbb{R}$, $c(x,u)$ is a scalar function on $\Omega\times \mathbb{R}$, $a_{kl}\in C^2(\bar\Omega)$, $b_k, c \in C^2(\bar\Omega\times \mathbb{R})$, and $u\in C^3(\Omega)$ is an admissible solution of \eqref{1.1}. For this estimation we may assume more general growth conditions than \eqref{quadratic growth}, namely
\begin{equation} \label{general growth}
D_xA, D_xB, D_zA, D_zB = o(|p|^3), \quad D_zA\ge o(|p|^2)I, D_zB\ge o(|p|^2),\quad D_pA, D_pB = o(|p|^2),
\end{equation}
as $|p| \rightarrow \infty$, uniformly for $x\in\Omega$, $|z|\le M$ for any $M > 0$.

Now we calculate, using \eqref{formula for Lv},
\begin{equation}\label{mathcal L g}
\begin{array}{rl}
\mathcal{L}g = \!\!&\!\!\displaystyle  2F^{ij}a_{kl}u_{ik}u_{jl} + u_ku_l\mathcal{L}a_{kl} + u_l [F^{ij}D_{x_ix_j}b_l-(F^{ij}A_{ij}^k+D_{p_k}B)D_{x_k}b_l]  \\
               \!\!&\!\!\displaystyle   + [F^{ij}D_{x_ix_j}c-(F^{ij}A_{ij}^k+D_{p_k}B)D_{x_k}c]+ (u_kD_zb_k + D_zc)\mathcal{L}u \\
               \!\!&\!\!\displaystyle  + [(2a_{kl}u_l+b_k) \mathcal{L}u_k+ 2F^{ij}(2u_l D_ia_{kl}+ \tilde D_{x_i}b_k)u_{jk}] \\
               \!\!&\!\!\displaystyle + F^{ij}[2(u_k\tilde D_{x_i}D_zb_k + \tilde D_{x_i}D_zc)u_j - (u_kD_{zz}b_k +D_{zz}c)u_iu_j],
\end{array}
\end{equation}
where $\tilde D_{x_i} = D_{x_i} +u_iD_z$ and, corresponding to \eqref{L u} and \eqref{L uk},
\begin{equation}\label{mathcal L u}
\mathcal Lu = F^{ij}\delta_{ik}u_{jk} - (F^{ij}A_{ij}^k + D_{p_k}B)u_k
\end{equation}
and
\begin{equation}\label{mathcal Luk}
\mathcal Lu_k = F^{ij}\tilde D_{x_k}A_{ij} + \tilde D_{x_k}B.
\end{equation}
From the growth conditions \eqref{general growth}, we then estimate
\begin{equation}\label{3.9}
\mathcal{L}g\ge 2F^{ij}a_{kl}u_{ik}u_{jl} + F^{ij}\tilde \beta_{ik}u_{jk} - C(\mathscr{T} +1)(\omega(|Du|)|Du|^4 +1),
\end{equation}
where
$$\tilde \beta_{ik}= 2(2u_l D_ia_{kl}+ \tilde D_{x_i}b_k) +(u_l D_zb_l+ D_zc)\delta_{ik}$$
and $\omega$ is a positive decreasing function on $[0,\infty)$ tending to $0$ at infinity, depending on $A,B$ and
$M_0 =\sup_\Omega|u|$,  and $C$ is a constant depending on $a_{kl}, b_k, c, \Omega, A, B$ and $M_0$. For our approach here we will assume $\{a_{kl}\}$ is positive definite so that $\{a_{kl}\}\ge a_1 I$ for some positive constant $a_1$. By Cauchy's inequality and the positivity of $\{a_{kl}\}$, we then obtain
\begin{equation} \label{g estimate}
\mathcal{L}g\ge a_1 \mathcal E_2^\prime  - C(\mathscr{T} +1)(\omega|Du|^4 +1),
\end{equation}
where $\mathcal{E}_2^\prime = F^{ij}u_{ik}u_{jk}$.

In our proof of Theorem \ref{Th1.3}, we will specifically choose
\begin{equation}\label{g}
g = |\delta u|^2 + |\mathcal G[u]|^2,
\end{equation}
where $\nu$ and $G$ given by \eqref{semilinear Section 4} are appropriately extended to all of $\bar \Omega$. For our purposes a convenient extension
will be as usual to take $\nu$ constant along normals to $\partial \Omega$ in a sufficiently small strip $\{d<\rho\}$ where the distance function $d$ is $C^2$ smooth and to extend
$\beta^\prime \in C^2(\bar\Omega)$ and  $\varphi \in C^2(\bar\Omega\times\mathbb{R})$
to vanish for $d\ge \rho$ so that $g=|Du|^2$ for $d\ge\rho$. Using Cauchy's inequality, we can estimate  here
\begin {equation}
\{a_{kl}\} \ge (1+\beta^\prime_0)^{-2} I
\end{equation}
 where $\beta^\prime_0 = \sup|\beta^\prime|$ so that \eqref{g estimate} holds with $a_1 = (1+\beta^\prime_0)^{-2}$ and $C$ depending on $\beta^\prime, \varphi, \Omega, A, B$ and $M_0$, with $\Omega \in C^3$. By further use of Cauchy's inequality, we also obtain
\begin{equation}\label{3.12}
\frac{a_1}{4} |Du|^2 - |\varphi|^2 \le g \le 2 |Du|^2 + \frac{2}{a_1} |\varphi|^2,
\end{equation}
so that the estimation of $Du$ is equivalent to that of the function $g$.

With these preparations, we give the proofs of the global gradient estimates.
\vskip8pt
\begin{proof}[Proof of Theorem \ref{Th1.3}.]
We employ auxiliary functions of the form
\begin{equation}\label{3.13}
v:= g + M^2_1 (\alpha\eta -\kappa \phi),
\end{equation}
in $\Omega$, where $g$ is given by \eqref{g},  $\eta=e^{+(-)K(u_0-u)}$, for a constant $u_0$ and a positive constant $K$ to be determined, $\phi\in C^2(\bar\Omega)$ is a negative defining function for $\Omega$ satisfying $\phi=0$ on $\partial\Omega$ and $D_\nu \phi =-1$ on $\partial \Omega$, $M_1=\sup\limits_\Omega |Du|$,  $\alpha$ and $\kappa$ are positive constants to be determined.
We assume the function $v$ attains its maximum over $\bar \Omega$ at some point $x_0$. If $x_0 \in \partial\Omega$, then we have $D_{\beta}v(x_0) \le 0$, where $\beta= \nu + \beta^\prime$. From the construction of $g$, we have
on $\partial\Omega$,
\begin{equation}\label{g Dg property on boundary}
g= |\delta u|^2 \quad {\rm and}\ \ Dg = D (|\delta u|^2) .
\end{equation}
Then we have on $\partial\Omega$,
\begin{equation}\label{3.15}
\begin{array}{ll}
D_\beta v \!\!&\!\!\displaystyle =  D_{\beta}|\delta u|^2 + M^2_1(\alpha D_\beta \eta - \kappa D_\beta \phi) \\
          \!\!&\!\!\displaystyle \ge 2 \delta_k u D_\beta\delta_k u + \kappa M^2_1 -  \alpha K\eta |\varphi|(\cdot,u)M^2_1,
          \end{array}
\end{equation}
using \eqref{g Dg property on boundary} and the boundary condition \eqref{bc}. Next by
 tangential differentiation of \eqref{bc}, as in \eqref {tangen diff the bc},  we have
\begin{equation}\label{3.16}
D_\beta \delta_k u =  \delta_{x_k} \varphi  + (D_z\varphi)\delta_k u  -  (\delta_k\beta_i) D_i u  -  \beta_i\nu_k(D_i\nu_l)D_lu - \beta_i (D_i\nu_k)D_\nu u.
\end{equation}
Plugging \eqref{3.16} into \eqref{3.15}, by Cauchy's inequality and the fact that $|\delta u|\le |Du|$,
we then have at $x_0$,
\begin{equation}\label{3.17}
\begin{array}{ll}
D_\beta v \!\!&\!\!\displaystyle \ge - C(1+|Du|^2) +  M^2_1[\kappa -\alpha K\eta|\varphi|(\cdot,u)] \\

          \!\!&\!\!\displaystyle \ge \frac{1}{2}\kappa M^2_1 - C,
\end{array}
\end{equation}
provided $\kappa$ is chosen large enough so that $\kappa\ge C(1+\alpha K\sup\limits_{\partial\Omega} \eta) $, where $C$ is a constant depending on $\beta,\varphi,\Omega$ and  $M_0$. With the constant $\kappa$  fixed, from \eqref{3.17} and $D_\beta v(x_0)\le 0$, we obtain
\begin{equation}\label{regular A conclusion 1}
M^2_1 \le \frac{2C}{\kappa }.
\end{equation}

We next consider the case that the maximum of $v$ occurs at a point $x_0\in\Omega$.
We now take the constant $\alpha$  sufficiently small and fix the defining function $\phi$ such that
\begin{equation}\label{key restrictions}
\alpha\le a_1/16\mathop{\rm osc}\limits_\Omega \eta, \quad\quad
\phi\ge -a_1/16\kappa \quad {\rm in} \  \Omega.
\end{equation}
Taking \eqref{3.12} into account, these restrictions in \eqref{key restrictions} will enable us to proceed from estimating $Du(x_0)$ to an estimate for $M_1$. Note that these conditions ultimately depend on the independent choice of the constant $K$. Since $Dv(x_0)=0$ and $D^2v(x_0) \le 0$, we have
\begin{equation}\label{4.11}
\mathcal{L}v =F^{ij}D_{ij}v - (F^{ij}A^k_{ij}+ D_{p_k}B)D_k v = F^{ij}D_{ij}v \le 0, \quad {\rm at} \ x_0,
\end{equation}
where $\mathcal{L}$ is the linearized operator defined in \eqref{linearized operators}. Our estimations then reduce to getting an appropriate lower bound for $\mathcal L \eta$ and for this we separate cases (i) and (ii) in Theorem \ref{Th1.3}.

\vskip8pt

Case (i): $A$ uniformly regular.

Here we take the ``$+$'' sign in $\eta$,  that is  $\eta=e^{K(u_0-u)}$, and for convenience set
$u_0 = M^+_0 =\sup\limits_\Omega u$ so that $\eta\ge 1$.  Our estimation of $L\eta$ is motivated by the barrier constructions in \cite{JT2014, JT-oblique-II, JTY2013, JTY2014} for regular $A$, where the constant  $u_0$ is replaced by an admissible function $\bar u$. In particular the reader is referred to Section 2 in \cite{JT-oblique-II} for the extension to general operators. First by Taylor's formula, we have
\begin{equation}\label{3.20}
\begin{array}{rl}
\mathcal{L}(u_0-u) = \!\! & \!\!\displaystyle  F^{ij}[-w_{ij} -A_{ij}(\cdot,u,0) \\
              \!\! & \!\!\displaystyle  + A_{ij}(\cdot,u,0) - A_{ij}(\cdot,u,Du) + A_{ij}^k(\cdot,u,Du)u_k] + (D_{p_k}B)u_k\\
            = \!\! & \!\!\displaystyle  \frac{1}{2}F^{ij}A_{ij}^{kl}(\cdot,u,\hat p)u_ku_l - F^{ij}( w_{ij} +A_{ij}(\cdot,u,0)) + (D_{p_k}B)u_k,
\end{array}
\end{equation}
where $\hat p=\theta Du$ with $\theta\in (0,1)$. Since $A$ is uniformly regular, we can estimate
\begin{equation}\label{3.21}
\frac{1}{2}F^{ij}A_{ij}^{kl}(\cdot,u,\hat p)u_ku_l \ge \frac{\lambda_0}{2} |Du|^2 \mathscr{T} - \frac{\bar \lambda_0}{2} F^{ij}u_iu_j.
\end{equation}
From \eqref{3.20}, \eqref{3.21}, we have
\begin{equation}\label{3.22}
\begin{array}{rl}
\mathcal{L}\eta \!\!&\!\!\displaystyle = K\eta[\mathcal{L}(u_0-u) + KF^{ij}u_iu_j] \\
                \!\!&\!\!\displaystyle \ge K\eta[\frac{\lambda_0}{2} |Du|^2 \mathscr{T} + (K-\frac{\bar \lambda_0}{2})F^{ij}u_iu_j
  - F^{ij}( w_{ij} +A_{ij}(\cdot,u,0))  +  (D_{p_k}B)u_k] \\
                \!\!&\!\!\displaystyle \ge K\eta[\frac{\lambda_0}{2} |Du|^2 \mathscr{T} +\frac{K}{2}F^{ij}u_iu_j- F^{ij} w_{ij} -\mu \mathscr{T} +  (D_{p_k}B)u_k],
\end{array}
\end{equation}
by choosing $K\ge \bar\lambda_0$, where $\mu$ is a constant depending on $A$. At this point we introduce a more general condition than the concavity F2 which also includes the homogeneous case. Namely we assume for any constant $a > a_0$, there exist non-negative constants $\mu_0$ and $\mu_1$ such that
\begin{equation} \label{3.23}
r\cdot F_r \le F + \mu_0 + \mu_1\mathscr{T}
\end{equation}
whenever $F \ge a$. From \eqref{1.9}, we see that  F2 implies \eqref{3.23},
 with $\mu_0 = \max\{0, - F(\mu I)\}$ and $\mu_1 = \mu$ for any $\mu >0$. Note that when
$a_0 > -\infty$, then \eqref{3.23}, with  $\mu_0 =\max\{0, - a_0\}$ and $\mu_1 = 0$, is immediate from \eqref{homogeneity}.
Using \eqref {3.23} in \eqref{3.22} we thus obtain
\begin{equation}\label{3.24}
\mathcal{L}\eta \ge K\eta[\frac{\lambda_0}{2} |Du|^2 \mathscr{T} + \frac{K}{2}F^{ij}u_iu_j- \mu (1+\mathscr{T}) - B +(D_{p_k}B)u_k],
\end{equation}
where now $\mu$ depends on $A$, $\mu_0$ and $\mu_1$.

Assuming now F5, with $b = \infty$, so that
$\mathscr{T} \ge \delta_0$ for $B\ge b_0 >a_0$ and supplementing the growth conditions \eqref{general growth}
by
\begin{equation}\label{3.25}
B-p\cdot B_p \le o(|p|)^2,
\end{equation}
we then have from \eqref{3.24}, with $\omega$ sufficiently small,
\begin{equation}\label{3.26}
\begin{array}{rl}
\mathcal{L}\eta \ge \!\!&\!\!\displaystyle K\eta\mathscr{T}[\frac{\lambda_0}{2} |Du|^2  -\omega|Du|^2 -C]\\
\ge \!\!&\!\!\displaystyle \frac{K\lambda_0}{4}\mathscr{T}|Du|^2
\end{array}
\end{equation}
provided $|Du|\ge C_1$ for some sufficiently large constant $C_1$, depending on $F$, $A$, $B$ and $M_0$.
Combining \eqref{g estimate}, \eqref{3.26}, and also choosing $K \ge \frac{4}{\lambda_0}$, we then obtain
\begin{equation}\label{3.27}
\mathcal{L}v \ge  \alpha M_1^2 |Du|^2\mathscr{T} - C\mathscr{T} (\omega|Du|^4 +\kappa M_1^2
\omega |Du|^2 + 1).
\end{equation}
On the other hand if F5\textsuperscript{+} is satisfied, with $b=\infty$, and ``$o$'' is weakened to ``$O$'' in \eqref{3.25}, that is
\begin{equation}\label{3.28}
B-p\cdot B_p \le O(|p|)^2,
\end{equation}
then we have from \eqref{3.24},
\begin{equation}\label{3.29}
\mathcal{L}\eta \ge K\eta[\frac{\lambda_0}{2} |Du|^2 \mathscr{T} + \frac{K}{2}F^{ij}u_iu_j -C(1+|Du|^2)],
\end{equation}
and we arrive again at the inequality \eqref{3.27}, with large enough $K$, by using the positivity of $F_r$ to estimate
 $$ F^{ij}u_iu_j \ge \delta_1|Du|^2$$
 for some positive constant $\delta_1$ when $|r| \le C$ and $F(r)\ge b_0$.

Now since  $v$ is maximised at $x_0$ and  $\omega$ can be made arbitrarily small for large enough $|Du|$, we must have $|Du(x_0)|\le C$, noting that $\alpha$ and $\kappa$ are now fixed by our choice of $K$ above. From
\eqref{3.12}, \eqref{3.13} and our restrictions on $\alpha$ and $\phi$, we obtain
$$v\le C + \frac{\alpha}{8} M^2_1, \quad \mbox{in} \ \Omega,$$
so that again using \eqref{3.12}, we conclude $M_1 \le C$, where $C$ now depends on $F, A, B, \Omega, b_0, \beta,
\varphi$ and $M_0$. This completes the proof of case (i) in Theorem \ref{Th1.3}, with conditions \eqref{quadratic growth} and F2 weakened to \eqref{general growth},  \eqref{3.23} and \eqref{3.28}.

\vskip8pt

Case (ii): F7 holds, $\beta=\nu$.

We take the ``$-$'' sign in $\eta$ so that $\eta=e^{K(u-u_0)}$ and  set $u_0 = M^-_0 = \inf \limits_\Omega u$.
Assuming growth conditions,
\begin{equation}\label{3.30}
p\cdot A_p \le O(|p|^2) I, \quad  p\cdot B_p \le O(|p|^2),
\end{equation}
as $|p| \rightarrow \infty$, uniformly for $x\in\Omega$, $|z|\le M$ for any $M > 0$,
we then have from \eqref{mathcal L u}, in place of \eqref{3.22},
\begin{equation}\label{3.31}
\begin{array}{rl}
\mathcal{L}\eta \!\!&\!\!\displaystyle = K\eta[\mathcal{L}(u-u_0) + KF^{ij}u_iu_j] \\
                \!\!&\!\!\displaystyle =  K\eta[ KF^{ij}u_iu_j +F^{ij}u_{ij} - (F^{ij}A_{ij}^k + B_{p_k})u_k] \\
                \!\!&\!\!\displaystyle \ge K\eta[ KF^{ij}u_iu_j +F^{ij}u_{ij} - \mu(\mathscr{T} + 1)(1+|Du|^2)],
\end{array}
\end{equation}
where $\mu$ is a constant depending on $M_0$.  At this stage, anticipating our use of F7, we can fix the constant $K$
so that $K \ge 2n(1+2\mu)/\min\{\delta_0,\delta_1\}$. In order to get a lower bound for $\mathcal{L}\eta$, similar to
\eqref{3.26}, at the point $x_0\in\Omega$ where $v$, given by  \eqref{3.13}, is maximised, we also need to impose our key restrictions in the hypotheses of case (ii) of Theorem \ref{Th1.3}, namely $\mathcal F$ is orthogonally invariant,
$\beta = \nu$, that is $\beta^\prime = 0$,  and
\begin{equation}\label{3.32}
A = o(|p|^2),
\end{equation}
as $|p| \rightarrow \infty$, uniformly for $x\in\Omega$, $|z|\le M$ for any $M > 0$.

By choosing coordinates, we can assume the augmented Hessian $M[u]=D^2u-A=\{w_{ij}\}$ is diagonal at $x_0$. Then we have, at $x_0$,
\begin{equation}\label{diagonal equality}
u_{ii} = w_{ii}+ A_{ii},\quad \forall\ i,\quad \quad{\rm and}\quad\quad  u_{ij}=A_{ij}, \quad {\rm for} \ i\neq j.
\end{equation}
From now on, all the calculations will be made at the maximum point $x_0$.
Since $Dv(x_0)=0$, we have
\begin{equation}\label{D i H = 0 F7}
v_i  = g_i + M^2_1(\alpha K\eta u_i  - \kappa \phi_i ) =0, \ \ {\rm for}\ i=1,\cdots, n,
\end{equation}
that is
\begin{equation}\label{D i v 1}
g_i= M^2_1(- \alpha K\eta u_i + \kappa \phi_i ), \ \ {\rm for}\ i=1,\cdots, n.
\end{equation}
On the other hand, we have
\begin{equation}\label{D i v 2}
\begin{array}{ll}
g_i \!\!&\!\!\displaystyle =D_i [|\delta u|^2 + (D_\nu u - \varphi)^2] \\
     \!\!&\!\!\displaystyle =D_i [|Du|^2 -2\varphi D_\nu u  + \varphi^2] \\
     \!\!&\!\!\displaystyle = 2 (u_k - \varphi \nu_k)u_{ki} - 2\varphi u_kD_{i}\nu_k + 2(\varphi - D_\nu u)\tilde D_{x_i}\varphi,
\end{array}
\end{equation}
for $i=1,\cdots,n$, where $\tilde D_x = D_x +DuD_z$ and in accordance with our extension of $G$ to $\Omega$,
$\varphi = 0$ for $d \ge \rho$. Combining \eqref{D i v 1}, \eqref{D i v 2} and \eqref{diagonal equality}, we have
\begin{equation}\label{before largest Du}
\begin{array}{rl}
(u_i -\varphi \nu_i)w_{ii}
    = \!\!&\!\!\displaystyle  \frac{1}{2}M^2_1(-\alpha K \eta u_i + \kappa \phi_i) + \varphi  u_k D_i\nu_k -(\varphi - D_\nu u)\tilde D_{x_i} \varphi \\
      \!\!&\!\!\displaystyle -(u_i - \varphi \nu_i)A_{ii}-\sum_{k\neq i}(u_k -\varphi \nu_k)A_{ki},
\end{array}
\end{equation}
for $i=1,\cdots,n$. Without loss of generality, we can further choose our coordinates so that
\begin{equation}\label{largest Du}
u_1(x_0) \ge \frac{1}{\sqrt n}|Du(x_0)|.
\end{equation}
By assuming
\begin{equation}\label{u1aass1}
u_1(x_0)>2\max \{\sup\limits_\Omega |\varphi |, \kappa \sup\limits_\Omega (|D\phi|/\alpha K)\},
\end{equation}
we also have
\begin{equation}\label{u1aass2}
\frac{u_1}{u_1-\varphi\nu_1}\ge \frac{2}{3},
\end{equation}
at $x_0$. From \eqref{before largest Du}, \eqref{u1aass1} and \eqref{u1aass2}, we then obtain
\begin{equation}\label{push a negative w11}
\begin{array}{rl}
w_{11} = \!\!&\!\!\displaystyle  \frac{1}{u_1 -\varphi \nu_1} [\frac{1}{2}M^2_1(-\alpha K \eta u_1 + \kappa \phi_1) + \varphi  u_k D_1\nu_k \\
         \!\!&\!\!\displaystyle  -(\varphi - D_\nu u)\tilde D_{x_1} \varphi -\sum_{k> 1}(u_k -\varphi \nu_k)A_{k1}] - A_{11}\\
      \le\!\!&\!\!\displaystyle  \frac{1}{u_1 -\varphi \nu_1} [- \frac{1}{4}\alpha K \eta M^2_1 u_1 + |\varphi u_k D_1\nu_k| \\
         \!\!&\!\!\displaystyle  + |(\varphi - D_\nu u)\tilde D_{x_1} \varphi|  + |\sum_{k>1}(u_k -\varphi \nu_k)A_{k1}|] +  |A_{11}|\\
      \le\!\!&\!\!\displaystyle  -\frac{1}{6}\alpha K \eta M^2_1 + C (|\varphi D_1\nu_k| + |\tilde D_{x_1}\varphi| + \sum_{k>1}|A_{k1}|) + |A_{11}|\\
      \le\!\!&\!\!\displaystyle  -\frac{1}{6}\alpha K M^2_1 + C(\omega|Du|^2+1) ,
\end{array}
\end{equation}
at $x_0$, where $\omega = \omega(|Du|)$ approaches $0$ as $|Du|\rightarrow \infty$ and the growth condition \eqref{3.30} is used in the last inequality. It then follows that $w_{11}(x_0) < 0$ provided $|Du(x_0)| \ge C_1$ for some constant $C_1$,  depending on $F, A, B, \Omega, \varphi$ and $M_0$. Since $w_{11}$ is the eigenvalue of $M[u]$ corresponding to the eigenvector $e_1$ and the matrix $F_r$ is  diagonal at $M[u]$, by virtue of the orthogonal invariance of $\mathcal F$, we then obtain from F7 and \eqref{largest Du} that
\begin{equation}\label{3.40}
F^{ij}u_iu_j \ge F^{11}u^2_1 \ge \frac{1}{n}(\delta_0 +\delta_1\mathscr{T})|Du|^2,
\end{equation}
at $x_0$. From our choice of $K$ we then obtain, from \eqref{3.31} and \eqref{3.40},
\begin{equation}\label{3.41}
\mathcal{L}\eta \ge K \eta [2(1+\mathscr{T})|Du|^2 + F^{ij}u_{ij}],
\end{equation}
at $x_0$. Note that if $F^{ij}w_{ij} \ge 0$, as in \eqref{homogeneity}, then we can absorb the term $F^{ij}u_{ij}$ in the last term in \eqref{3.31} so that it is not present in \eqref{3.41}. Furthermore if $F$ is positive homogeneous of degree one, we can replace $p\cdot B_p$ in \eqref{3.30} by $p\cdot B_p - B$.

Assuming also the growth conditions \eqref{general growth}, we now combine \eqref{3.41} with \eqref{g estimate}, with $a_1=1$, and in general, (when $F^{ij}w_{ij}$ may be unbounded from below), use Cauchy's inequality to control the term $F^{ij}u_{ij}$. Accordingly  we obtain, at the maximum point $x_0$,
\begin{equation}\label{combining together}
\begin{array}{rl}
0 \ge \mathcal{L}v \ge \!\!&\!\!\displaystyle   \alpha M^2_1K[2(1+\mathscr{T})|Du|^2 -
\alpha M^2_1K\eta\mathscr{T}] \\
                   \!\!&\!\!\displaystyle  - C(1+\mathscr{T}) (\omega|Du|^4 +\kappa M_1^2
\omega |Du|^2 + 1) \\
                   \ge \!\!&\!\!\displaystyle   (1+\mathscr{T})[ \alpha M^2_1|Du|^2 - C(\omega|Du|^4 +\kappa M_1^2
\omega |Du|^2 + 1)],
\end{array}
\end{equation}
provided, taking account of \eqref{3.12} and \eqref{3.13}, we further restrict $\alpha$  so that $\alpha K\eta \le \frac{1}{16}$. As in case (i), since $\omega$ can be made arbitrarily small for large enough $|Du|$  and, $\alpha$ and $\kappa$ are fixed by our choice of $K$ above, we obtain an estimate $|Du(x_0)| \le C$ and hence $M_1 \le C$, where $C$ depends on $F, A, B, \Omega, b_0, \varphi$ and $M_0$. This completes the proof of case (ii) in Theorem \ref{Th1.3}, with conditions \eqref{quadratic growth} weakened to \eqref{general growth} and \eqref{3.30}.
\end{proof}

\begin{remark}\label{Remark3.1}
When $\beta = \nu$, so that $a_{ij} = \delta_{ij}$ in \eqref{quadratic g}, we can further weaken the general growth conditions \eqref{general growth}, in both cases of the above proof, to
\begin{equation}\label{more general growth}
\begin{array}{c}
p\cdot D_xA +|p|^2 D_zA\ge o(|p|^4)I, \quad p\cdot D_xB +|p|^2 D_zB\ge o(|p|^4), \\
D_xA, D_xB  = o(|p|^4), \quad D_zA, D_zB = o(|p|^3), \quad D_pA, D_pB = o(|p|^3),
\end{array}
\end{equation}
as $|p| \rightarrow \infty$, uniformly for $x\in\Omega$, $|z|\le M$ for any $M > 0$, while if also $\varphi = 0$ so that
$g=|Du|^2$, we need only assume, in place of \eqref{general growth},
\begin{equation}\label{3.44}
p\cdot D_xA +|p|^2D_zA \ge o(|p|^4) I, \quad p\cdot D_xB +|p|^2D_zB  \ge o(|p|^4).
\end{equation}
Discarding the boundary condition \eqref{bc} so that $g=|Du|^2$ and $\kappa = 0$ in \eqref{3.13}, we then have a global gradient estimate for admissible solutions $u\in C^3(\Omega)\cap C^1(\bar\Omega)$ in terms of the gradient on the boundary,
namely
\begin{equation}\label{3.45}
\sup\limits_\Omega |Du|\le C(1+ \sup\limits_{\partial\Omega} |Du|),
\end{equation}
where $C$ is a constant depending on $F, A, B, \Omega,b_0$ and $|u|_{0;\Omega}$. The estimate \eqref{3.45} thus holds when $F$, $A$ and $B$ satisfy the hypotheses of Theorem \ref{Th1.3}, but more generally we can replace \eqref {quadratic growth} by \eqref{3.44} with cases (i) and (ii) replaced respectively by
\begin{itemize}
\item[(i'):] $A$ is uniformly regular, $F$ satisfies \eqref{3.23}  and either  (a) F5\textsuperscript{+}, with $b=\infty$ and \eqref{3.28}, or
(b) F5, with $b=\infty$,  and   \eqref{3.25};
\item[(ii'):]  $\mathcal{F}$ is orthogonally invariant satisfying F7, $A$ and $B$ satisfy \eqref{3.30} and \eqref{3.32}.
\end{itemize}
Using our barrier constructions in Section 2 of \cite{JT-oblique-II} in the  proof of case (i') also enables some alternative conditions to uniform regularity which would include the case when $A$ is independent of $p$.
\end{remark}

\begin{remark}\label{Remark3.2}
From the estimate \eqref{3.45} in Remark \ref{Remark3.1}, we can infer, under the same hypotheses, a global gradient bound for admissible solutions $u\in C^3(\Omega)\cap C^1(\bar\Omega)$ of the  boundary value problem \eqref{1.1}-\eqref{1.2} for nonlinear $\mathcal G$, when $G$ is uniformly concave in $p$, that is
\begin{equation}\label{uniformly concave G}
D_{p_ip_j}G(x,z,p) \xi_i\xi_j \le - \sigma_0 |\xi|^2,
\end{equation}
for all $x\in \partial\Omega$, $|z|\le M$, $p\in \mathbb{R}^n$, any unit vector $\xi$, and some positive constant $\sigma_0$, depending on the constant $M$. By virtue of the global bound \eqref{3.45}, we only need to estimate $Du$ on $\partial\Omega$. Using Taylor's expansion, with $\theta\in (0,1)$, we have on $\partial\Omega$,
\begin{equation}
\begin{array}{rl}
0=G(x,u,Du) \!\!&\!\!\displaystyle = G(x,u,0) + D_{p_i}G(x,u,0)D_iu + \frac{1}{2} D_{p_ip_j}G(x,u,\theta Du) D_iuD_ju \\
            \!\!&\!\!\displaystyle \le G(x,u,0) +  D_{p_i}G(x,u,0)D_iu - \frac{1}{2}\sigma_0 |Du|^2,
\end{array}
\end{equation}
which leads to
\begin{equation}\label{boundary gradient for uniformly concave G}
|Du|\le C, \quad \quad {\rm on}\ \partial\Omega,
\end{equation}
and hence
\begin{equation}
|Du|\le C, \quad \quad {\rm on}\ \Omega,
\end{equation}
where $C$ depends on $F, A, B, \Omega,b_0, \sigma_0$ and $|u|_{0;\Omega}$.
\end{remark}

\begin{remark}\label{Remark3.3}
We remark that it is not necessary to restrict $\beta=\nu$ in case (ii) of Theorem \ref{Th1.3} and we can assume more generally
\begin{equation}\label{relaxed beta}
|\beta - \nu| = |\beta^\prime|< 1/\sqrt n.
\end{equation}
Replacing $\nu$ by $\beta$ in \eqref{D i v 2}, we still obtain $w_{11} (x_0) < 0$, if $|Du(x_0)| \ge C_1$, under condition \eqref{relaxed beta},  where now $C_1$ depends also on $\sup_{\partial\Omega} |\beta-\nu|$.
\end{remark}

\subsection{Local gradient estimates}\label{subsection3.2}

In this subsection we prove local and interior versions of Theorem \ref{Th1.3} and unlike the global gradient estimates in the previous section we will need the full strength of the growth conditions in \eqref{quadratic growth} with respect to the
$p$ variables. The local estimates will also provide us with estimates in terms of moduli of continuity of solutions under weaker growth conditions analogous to the uniformly elliptic case in \cite{LieTru1986}. For the latter estimates we also need to assume in case (ii) a complementary condition to \eqref{3.23}, namely
 that there exist non-negative constants $\mu_0$, $\mu_1$ and $\mu_2$ such that for any $r\in \Gamma$,
\begin{equation}\label{3.053}
-r\cdot F_r \le \mu_0 + \mu_1 \mathscr{T}(r) + \mu_2 |F(r)|.
\end{equation}
Clearly \eqref{3.053} is satisfied trivially for positive homogeneous $F$  or if F2 and either $a_0 > -\infty$ or F4 are satisfied  by \eqref{homogeneity}.

We summarise the results in the following theorem, where for convenience we use balls rather than the domains $\Omega_0$ and $\Omega^\prime$ in  Theorem \ref{Th1.1}.

\begin{Theorem}\label{Th3.1}
Let $u\in C^3(\Omega)$ be an admissible solution of equation \eqref{1.1} in $\Omega$ and assume
 that  $\mathcal F$ satisfies F1 and F3, $A, B \in C^1(\bar\Omega\times\mathbb{R}\times \mathbb{R}^n)$, satisfy \eqref{quadratic growth}, $b_0:= \inf\limits_\Omega B(\cdot, u,Du) >a_0$ together with
one of the following further conditions:
\begin{itemize}
\item[(i):] $A$ is uniformly regular, $F$ satisfies \eqref{3.23}   and either  {\rm (a)} F5\textsuperscript{+}, with $b=\infty$, or
{\rm (b)} F5, with $b=\infty$,  and  \eqref{3.28};
\item[(ii):]  $\mathcal{F}$ is orthogonally invariant satisfying F7  and $A$ satisfies \eqref{3.32}.

\end{itemize}
Then for any $y\in\bar\Omega$, $0< R < 1$ and ball $B_R = B_R(y)$,
we have the estimate
\begin{equation}\label{3.50}
|Du(y)| \le C (\frac{1}{R}+ \sup\limits_{\partial\Omega \cap B_R} |Du|),
\end{equation}
for $u\in C^1(\bar\Omega\cap B_R)$, where $C$ is a constant depending on $F, A, B, \Omega,b_0$ and
$|u|_{0;\Omega\cap B_R}$. Furthermore if $y\in\partial \Omega$, $\partial\Omega\cap B_R\in C^{2,1}$ and
$\mathcal G[u] = 0$ on $\partial\Omega$, for an oblique semilinear boundary operator $\mathcal G$, given by
\eqref{semilinear}, with  $\varphi\in C^2(\partial\Omega\cap B_R\times \mathbb{R})$, $\beta\in C^2(\partial\Omega\cap B_R)$  in case {\rm (i)} and  $\beta=\nu$  in case {\rm (ii)}, then we have the estimate
\begin{equation}\label{3.51}
|Du(y)| \le \frac{C}{R},
\end{equation}
for $u\in C^2(\bar\Omega\cap B_R)$, where $C$ depends additionally on $\beta$ and $\varphi$. If  ``$o$'' is extended to ``$O$'' in \eqref{quadratic growth} and \eqref{3.32}, then there exists a positive constant $\theta$ depending on the same quantities as
$C$, such that the estimates \eqref{3.50} and \eqref{3.51} continue to hold provided
$\mathop{\rm osc}\limits_{ \Omega \cap B_R} u < \theta$ and $\mathcal{F}$ satisfies \eqref{3.053} in case {\rm (ii)} with $C$ depending additionally on $\mu_0$, $\mu_1$ and $\mu_2$.
\end{Theorem}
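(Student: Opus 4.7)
The plan is to mimic the proof of Theorem \ref{Th1.3}, replacing the global auxiliary function $v=g+M_{1}^{2}(\alpha\eta-\kappa\phi)$ by its localized counterpart
\begin{equation*}
\tilde v=\zeta^{4}g+\alpha M_{1}^{2}\eta-\kappa M_{1}^{2}\phi\quad\text{on }\overline{\Omega\cap B_R},
\end{equation*}
where $\zeta\in C^{2}(\mathbb R^{n})$ is a nonnegative cutoff with $\zeta(y)=1$, $\mathop{\rm supp}\zeta\subset B_R(y)$, $|D^{j}\zeta|\le CR^{-j}$ for $j=1,2$, $M_{1}=\sup_{\Omega\cap B_R}|Du|$, and $g,\alpha,\kappa,K,\eta,\phi$ are chosen exactly as in Theorem \ref{Th1.3}---in particular $g=|Du|^{2}$ (with $\kappa=0$) for \eqref{3.50} and $g=|\delta u|^{2}+|\mathcal G[u]|^{2}$ of \eqref{g} for \eqref{3.51}. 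Let $x_{0}\in\overline{\Omega\cap B_R}$ be the maximum point of $\tilde v$; three cases arise.

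If $x_{0}\in\partial B_R$, then $\zeta(x_{0})=0$, and the comparison $\tilde v(y)\le\tilde v(x_{0})$ gives $|Du(y)|^{2}\le CM_{1}^{2}(\alpha\mathop{\rm osc}\eta+\kappa\mathop{\rm osc}\phi)\le M_{1}^{2}/4$ once $\alpha$ and $\phi$ are restricted as in \eqref{key restrictions}; this closes the estimate after taking $y$ to be a point where $|Du|$ is essentially $M_{1}$. If $x_{0}\in\partial\Omega\cap B_R$, then for \eqref{3.50} the same comparison directly yields $|Du(y)|^{2}\le C\sup_{\partial\Omega\cap B_R}|Du|^{2}+CM_{1}^{2}\alpha$, while for \eqref{3.51} the argument \eqref{3.15}--\eqref{regular A conclusion 1} of Theorem \ref{Th1.3} carries over, the only new feature being the extra boundary contribution $D_{\beta}(\zeta^{4}g)(x_{0})=O(R^{-1}\zeta^{3}(x_{0})M_{1}^{2})$, which is absorbed by the $\kappa M_{1}^{2}$ term once $\kappa\ge CR^{-1}$.

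The main case is $x_{0}\in\Omega\cap B_R$, where $D\tilde v(x_{0})=0$ and $\mathcal L\tilde v(x_{0})\le0$. Expanding $\mathcal L(\zeta^{4}g)$ produces $\zeta^{4}\mathcal Lg$, a cross term $8\zeta^{3}F^{ij}\zeta_{i}g_{j}$, and a second-order cutoff term controlled by $CR^{-2}\zeta^{2}(\mathscr T+1)(|Du|^{2}+1)$. Cauchy--Schwarz splits the cross term as $\tfrac12\zeta^{4}a_{1}\mathcal E_{2}'+CR^{-2}\zeta^{2}\mathscr T|Du|^{2}$, with the first half absorbed by the $a_{1}\mathcal E_{2}'$ on the right of \eqref{g estimate}. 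The lower bounds \eqref{3.26}/\eqref{3.29} for $\mathcal L\eta$ in case (i) and \eqref{3.41} in case (ii) are unchanged, giving the cutoff analog of \eqref{3.27}/\eqref{combining together},
\begin{equation*}
0\ge(1+\mathscr T)\bigl[\alpha M_{1}^{2}\zeta^{4}|Du|^{2}-C\bigl(\omega|Du|^{4}\zeta^{4}+\omega\kappa M_{1}^{2}|Du|^{2}\zeta^{4}+R^{-2}|Du|^{2}\zeta^{2}+1\bigr)\bigr]
\end{equation*}
at $x_{0}$, with $\omega=\omega(|Du|)\to0$ as $|Du|\to\infty$. Absorbing $\omega$ for large $|Du(x_{0})|$ and using Young's inequality $R^{-2}|Du|^{2}\zeta^{2}\le\tfrac12\alpha M_{1}^{2}\zeta^{4}|Du|^{2}+C\alpha^{-1}M_{1}^{-2}R^{-4}$ leaves $\zeta^{4}(x_{0})|Du(x_{0})|^{4}\le CM_{1}^{-2}R^{-4}+C$; combined with the comparisons in the previous cases, this delivers \eqref{3.50} and \eqref{3.51}.

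For the modulus-of-continuity version (when $o$ is relaxed to $O$), the function $\omega$ above becomes a positive constant $\omega_{0}$, and the absorption $C\omega_{0}|Du|^{4}\zeta^{4}\le\tfrac12\alpha M_{1}^{2}\zeta^{4}|Du|^{2}$ now forces $\alpha\ge C\omega_{0}$; this is compatible with the restrictions $\alpha K\sup\eta\le\tfrac1{16}$ and $\alpha\le a_{1}/(16\mathop{\rm osc}\eta)$ exactly when $\mathop{\rm osc}_{\Omega\cap B_R}u\le\theta$ for a sufficiently small $\theta$ depending on the structural constants. The additional hypothesis \eqref{3.053} in case (ii) supplies the lower bound $F^{ij}u_{ij}=F^{ij}w_{ij}+F^{ij}A_{ij}\ge-\mu_{0}-\mu_{1}\mathscr T-\mu_{2}B-C\mathscr T$ needed to replace the homogeneity-based $F^{ij}w_{ij}\ge0$ used implicitly in \eqref{3.41}. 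The main obstacle throughout is the careful balancing between the cutoff derivatives and the strictly-regular/F7 gain; the power $\zeta^{4}$ is the smallest one for which the Cauchy--Schwarz splitting of $F^{ij}\zeta_{i}g_{j}$ leaves an absorbable $\tfrac12\zeta^{4}a_{1}\mathcal E_{2}'$ in balance with \eqref{g estimate}.
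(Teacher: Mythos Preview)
Your overall plan—localize the auxiliary function of Theorem \ref{Th1.3} with a cutoff and rerun the argument—is exactly the paper's route, and your interior computation is essentially correct (although the claim that $\zeta^4$ is the minimal workable power is false: the paper uses $\zeta^2$, and Cauchy's inequality on the cross term $4\zeta\,\zeta_i\,u_lF^{ij}u_{lj}$ already balances against $\zeta^2\mathcal E_2'$ with remainder $C|D\zeta|^2|Du|^2\mathscr T$). Two points, however, do not go through as written.

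First, your treatment of the boundary case for \eqref{3.51} is a genuine gap. You absorb the extra term $D_\beta(\zeta^4 g)=O(R^{-1}M_1^2)$ by demanding $\kappa\ge C R^{-1}$, but this choice is incompatible with the ``key restrictions'' \eqref{key restrictions} that you invoke elsewhere to close the comparison. Indeed, with $y\in\partial\Omega$ one has $\sup_{\Omega\cap B_R}(-\phi)\sim R$, so $\kappa\sup(-\phi)\sim C$ is a \emph{fixed, possibly large} constant rather than the $a_1/16$ needed for the inequality $\tilde v(y)\le\tilde v(x_0)$ to yield anything nontrivial; the comparison then only gives $|Du(y)|^2\le CM_1^2$, which is vacuous. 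The paper resolves this not by enlarging $\kappa$ but by building, via an explicit boundary-flattening diffeomorphism, a cutoff $\zeta$ satisfying $D_\beta\zeta=0$ on $\partial\Omega\cap S_\zeta$; this kills the offending term outright and lets $\kappa$ stay independent of $R$. The paper also replaces your $M_1=\sup_{\Omega\cap B_R}|Du|$ by $\tilde M_1=\sup_{\Omega\cap B_R}\zeta|Du|$ in the auxiliary function, which is what makes the final inequality self-close (since $\zeta(x_0)|Du(x_0)|\le\tilde M_1$ by definition) without the circular device of ``taking $y$ where $|Du|$ is essentially $M_1$''.

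Second, in case (ii) you assert that \eqref{3.41} is ``unchanged'', but the crucial input $w_{11}(x_0)<0$ comes from the relation $Dv(x_0)=0$, which now involves the cutoff. One must redo \eqref{push a negative w11} with the extra $\zeta$-derivatives present; the paper obtains the localized version $\zeta^2 w_{11}\le-\tfrac{1}{6}\alpha K\tilde M_1^2+C[\zeta^2(\omega|Du|^2+1)+R^{-1}\zeta|Du|]$, whence $w_{11}(x_0)<0$ only under the threshold $\zeta(x_0)|Du(x_0)|>C_1/R$. Without this modification the appeal to F7 at $x_0$ is unjustified.
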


\begin{proof}
Theorem \ref{Th3.1} follows by modification of the proof of Theorem \ref{Th1.3}.
First we fix a function $\zeta\in C^0(\bar B_R)\cap C^2(S)$, satisfying $0\le\zeta\le 1$, where $S=S_\zeta\subset B_R$, denotes the support of $\zeta$. From \eqref{quadratic growth}, \eqref{g estimate} and Cauchy's inequality, we now obtain in place of \eqref{g estimate},
\begin{equation}\label{3.52}
\begin{array}{rl}
\mathcal{L}(\zeta^2 g) =   \!\!&\!\!\displaystyle  \zeta^2 \mathcal{L} g + (\mathcal {L}\zeta^2) g + 2F^{ij}D_i\zeta^2D_jg\\
                       \ge \!\!&\!\!\displaystyle  a_1\zeta^2  \mathcal E_2^\prime  - C\zeta^2 (\mathscr{T} +1)(\omega|Du|^4 +1)\\
                           \!\!&\!\!\displaystyle  - C(\mathscr{T}+1)(|Du|^2+1)[|D\zeta|^2 + (|Du|+1)\zeta|D\zeta | + \zeta|D^2\zeta |]\\
                        \ge \!\!&\!\!\displaystyle  a_1\zeta^2  \mathcal E_2^\prime  - C(\mathscr{T} +1)(|Du|^2+1) (\omega\zeta^2|Du|^2 +
                        \zeta|Du||D\zeta| + |D\zeta|^2 +|D^2\zeta| + 1),

\end{array}
\end{equation}
in $\Omega\cap S$, where $\omega=\omega(|Du|)$ is a positive decreasing function on $[0,\infty)$ tending to $0$ at infinity. With $g$ defined by \eqref{g}, we consider now in place of \eqref{3.13}, auxiliary functions of the form
\begin{equation}\label{3.53}
v:= \zeta^2 g + \tilde M^2_1 (\alpha\eta -\kappa \phi),
\end{equation}
where $\tilde M_1 = \sup\limits_{\Omega\cap B_R} \zeta |Du|$, $\alpha, \kappa, \eta$ and $\phi$ are as before, except that  $\Omega$ is replaced by $ \Omega \cap B_R$, in the definitions of $M^+_0$ and $M^-_0$. For the estimate \eqref{3.50}, which is the local version of the estimate \eqref{3.45} in Remark \ref{Remark3.1}, we take as there $g=|Du|^2$, $\kappa = 0$ and choose
\begin{equation}\label{zeta}
\zeta(x)=1-\frac{|x-y|^2}{R^2},
\end{equation}
so that
\begin{equation}\label{zeta prop}
\zeta(y) = 1, \  |D\zeta|\le c/R,  \  |D^2\zeta|\le c/R^2,
\end{equation}
for some constant $c$, and
\begin{equation}\label {3.58}
\mathcal{L}(\zeta^2 g) \ge a_1 \zeta^2 \mathcal E_2^\prime - C(\mathscr{T}+1)(\omega \zeta^2 |Du|^4 + \frac{1}{R}\zeta |Du|^3)
\end{equation}
at the maximum point $x_0$ of $v$ in $\Omega\cap S$, provided $\zeta(x_0) |Du(x_0)|> 1/R$ and $|Du(x_0)|>1$.
For the estimate \eqref{3.51}, we need to first take $R$ sufficiently small so that there exists a cut-off function $\zeta\in C^0(\bar B_R)\cap C^2(S_\zeta)$, $0\le\zeta\le 1$ satisfying  \eqref{zeta prop} together with the boundary condition
\begin{equation}\label{zeta boundary prop}
D_\beta \zeta = 0 \quad {\rm on} \ \partial\Omega\cap S_\zeta.
\end{equation}
We show how to construct such a function $\zeta$ from the function \eqref{zeta} at the end of the proof.

From the property \eqref{zeta boundary prop}, we now obtain in place of \eqref{3.15},
\begin{equation}
\begin{array}{ll}
D_\beta v \!\!&\!\!\displaystyle = 2\zeta (D_\beta \zeta) g + D_{\beta}|\delta u|^2 + \tilde M^2_1(\alpha D_\beta \eta - \kappa D_\beta \phi) \\
          \!\!&\!\!\displaystyle \ge 2 \zeta^2 (\delta_k u) D_\beta\delta_k u + \kappa \tilde M^2_1 -  \alpha K\eta |\varphi|(\cdot,u)\tilde M^2_1,
          \end{array}
\end{equation}
on $\partial\Omega\cap S$. With these modifications, the estimates \eqref{3.50} and \eqref{3.51} follow from the proof of Theorem \ref{Th1.3}, with $M_1$ replaced by $\tilde M_1$. In case (ii), we obtain in place of the estimate
\eqref{push a negative w11},
\begin{equation}\label{local negative w11}
\zeta^2w_{11} \le -\frac{1}{6}\alpha K M^2_1 + C[\zeta^2(\omega|Du|^2+1)  +  \frac{1}{R}\zeta |Du|],
\end{equation}
so that we obtain again $w_{11}(x_0) < 0$ provided $\zeta(x_0) |Du(x_0)|> C_1/R$, for some constant $C_1$,  depending on $F, A, B, \Omega, \varphi$ and $M_0$.

If we replace ``$o$'' by ``$O$'' in the structure conditions \eqref{quadratic growth}, we  obtain \eqref{3.58} with  $\omega = 1$. Similarly if we replace ``$o$'' by ``$O$'' in \eqref{3.32}, we obtain \eqref{local negative w11} with $\omega = 1$. Accordingly we may still arrive at our desired gradient estimates, \eqref{3.50} and \eqref{3.51}, if $\alpha$ can be chosen sufficiently large, in which case we can still satisfy
 \eqref{key restrictions} for $\alpha$, provided
\begin{equation}
 \mathop{\rm osc}\limits_{ \Omega \cap B_R} u =M^+_0 - M^-_0 \le\theta : =  \frac{1}{K} \log (1+ \frac{a_1}{16\alpha}).
\end{equation} Note that in case (ii), we cannot satisfy the further restriction $\alpha K\eta \le \frac{1}{16}$, for large
$\alpha$, so here we use condition \eqref{3.053} to control  the term $F^{ij}u_{ij}$ in \eqref{3.41}. We remark that when $A$ is regular such a control can be alternatively achieved through a barrier \cite{JT-oblique-II}.

To end the proof of Theorem \ref{Th3.1}, we give the key construction of the cut-off function at boundary.

{\it Construction of cut-off function at boundary.} We fix a point $y \in \partial\Omega$, which we may take to be the origin, and a coordinate system so that $\nu(0)=e_n$. Suppose that in some ball $B_\rho=B_\rho (0)$, $\Omega\cap B_\rho = \{x_n >h(x')\}$, $\partial\Omega \cap B_\rho = \{x_n=h(x')\}$, where $h\in C^3(\bar B_\rho^\prime)$, $B_\rho^\prime=\{|x'|<\rho\}$ and $x'=(x_1,\cdots, x_{n-1})$. By taking $\rho$ sufficiently small, we can assume,
\begin{equation}
\chi (x):= \beta_n(x) -\sum_{i=1}^{n-1} \beta_i (x) D_ih(x') \ge 1-\delta
\end{equation}
for any fixed $\delta>0$, since $\beta_n(0)=1$, $Dh(0)=0$. Now we consider a coordinate transformation $x\rightarrow \tilde x=\psi(x)$, where
\begin{equation}\label{cut-off diffeomorphism}
\begin{array}{rl}
\tilde x_i \!\!&\!\!\displaystyle = x_i - \beta_i (x) \tilde x_n, \quad \quad i=1,\cdots, n-1,\\
\tilde x_n \!\!&\!\!\displaystyle = \frac{1}{\chi(x)}(x_n -h(x')).
\end{array}
\end{equation}
Again with $\rho$ sufficiently small, we have $\psi(0)=0$, $\psi(\partial\Omega\cap B_\rho) = \{\tilde x_n=0\}$, $D\psi (0)=I$ and $\det D\psi >0$, so that in particular $\psi$ is invertible in $B_\rho$. Furthermore, if $\zeta \in C^1(\bar \Omega\cap B_\rho)$, $\tilde \zeta = \zeta \circ \psi^{-1}\in C^1(\psi(\bar \Omega \cap B_\rho))$, we obtain by calculation
\begin{equation}\label{cut-off property}
\begin{array}{rl}
D_{\tilde x_n} \tilde \zeta \circ \psi \!\!&\!\! \displaystyle = \sum_{i=1}^n\frac{\partial\zeta}{\partial x_i} \frac{\partial x_i}{\partial \tilde x_n}\\
\!\!&\!\! \displaystyle = \sum_{i=1}^{n-1}\beta_i(x)\frac{\partial\zeta}{\partial x_i} + (\chi(x)+ \sum_{i=1}^{n-1}\beta_i(x)D_ih(x'))\frac{\partial\zeta}{\partial x_n}\\
\!\!&\!\! \displaystyle = \sum_{i=1}^{n-1}\beta_i(x)\frac{\partial\zeta}{\partial x_i} + \beta_n(x) \frac{\partial\zeta}{\partial x_n}=D_\beta \zeta, \quad \quad {\rm on}\ \partial\Omega \cap B_\rho.
\end{array}
\end{equation}
Hence if $\tilde \zeta \in C^{1}_{0}(\psi(\bar\Omega \cap B_\rho))$ satisfies $D_{\tilde x_n}\tilde \zeta (\tilde x',0)=0$, we have $D_\beta \zeta =0$ on $\partial\Omega\cap B_\rho$.

With the help of \eqref{cut-off property}, a boundary cut-off function $\zeta\in C^0(\bar B_R)\cap C^2(S_\zeta)$, $0\le\zeta\le 1$ satisfying \eqref{zeta prop} and \eqref{zeta boundary prop} can be constructed. For a fixed point $y\in \partial\Omega$, which we may take to be the origin, we make the coordinate transformation $x\rightarrow \tilde x=\psi(x)$ as in \eqref{cut-off diffeomorphism}. In the $\tilde x$-coordinate system, we can choose the function
\begin{equation}
\tilde \zeta = 1-\frac{|\tilde x|^2}{R^2},
\end{equation}
then the function $\zeta=\tilde \zeta \circ \psi$ is the desired cut-off function satisfying the above properties \eqref{zeta prop} and \eqref{zeta boundary prop} as we expected.
\end{proof}

\begin{remark}\label{Remark3.4}
Note that  when $\beta=\nu$, \eqref{g} is similar to the corresponding function used for the gradient estimate of Neumann problems in \cite{JLL2007}, (and more recently for the $k$-Hessian equations in \cite{MX2014}). In our proof, we use the auxiliary functions \eqref{3.13} and \eqref{3.53}, which are modifications of the auxiliary functions used in Section 3 of \cite{LieTru1986} for uniformly elliptic equations and for interior gradient bounds for $k$-Hessian equations in \cite{Tru1997}. We remark that we can use alternative  functions; in particular  functions of the form $v = g \exp{(\alpha \eta- \kappa\phi)}$ and $v=\zeta^2 g \exp{(\alpha \eta- \kappa\phi)}$, with appropriately chosen positive constants $\alpha$ and $\kappa$, in place of \eqref{3.13} and \eqref{3.53} respectively.

\end{remark}

\begin{remark}\label{Remark3.5}
If we assume the matrix $A$ satisfies the following condition
\begin{equation}\label{3.68}
D_z A(x,z,p) \ge c |p|^2I,
\end{equation}
as $|p|\rightarrow\infty$, $x\in \Omega$, $|z|\le M$, $p\in \mathbb{R}^n$ for any $M>0$ and for some $c>0$, we can dispense with the uniformly regular assumption on $A$ in Theorem \ref{Th1.3} and Theorem \ref{Th3.1} and the proof is much simpler. More generally we can replace the exponent $2$ on the right hand side of \eqref{3.68} by $m$ for $0\le m \le 2$ provided the powers of $|p|$ in the growth conditions \eqref{quadratic growth} are reduced by $2-m$. When the constant $c$ is sufficiently large, ``$o$'' in \eqref{quadratic growth} can be weakened to ``$O$''.
\end{remark}

\begin{remark}\label{Remark3.6}
Using the cut-off function $\zeta$ constructed in the proof of Theorem \ref{Th3.1}, we can also extend the global second derivative estimates in Theorem \ref{1.2} to local estimates in the case of semilinear boundary operators
$\mathcal G$. As in Remark \ref{Remark 2.1} we need only assume $\partial\Omega\cap B$  is uniformly $(\Gamma,A,G)$-convex with respect to $u$ for some ball $B = B_R(y)$ of radius $R$, centred at $y\in\bar\Omega$. Under the hypotheses of Theorem \ref{1.2}, with $\Omega$ replaced by $\Omega\cap B$ and $\partial\Omega$ replaced by
$\partial\Omega\cap B$, we then obtain for semilinear  $\mathcal G$, in place of  \eqref{1.10},
\begin{equation}\label{1.10 local}
 |D^2 u(y)|\le C(1 + R^{-2}),
\end{equation}
where $C$ is now a constant depending on $F, A, B, G, \Omega, \beta_0$ and  $|u|_{1;\Omega\cap B}$.

\end{remark}

\begin{remark}\label{Remark3.7}
We remark that the uniformly regular definition \eqref{1.18} \eqref{1.19} of the matrix $A$, can also be equivalently formulated as follows, namely
\begin{equation}\label{alter lambda}
\lambda(x,u,p) = \inf_{|\xi|=|\eta|=1, \atop \xi\cdot \eta=0}  A_{ij}^{kl}(x,u,p)\xi_i\xi_j\eta_k\eta_l \ge \lambda_0 >0,
\end{equation}
and
\begin{equation}\label{alter Lambda}
\Lambda(x,u,p) = \sup_{|\xi|=|\eta|=1} |A_{ij}^{kl}(x,u,p)\xi_i\xi_j\eta_k\eta_l| \le \mu_0 \lambda(x,u,p),
\end{equation}
for $x\in \Omega$, $|u|\le M$, for positive constants $\lambda_0$ and $\mu_0$, depending on $M$. Then the estimates \eqref{1.21}, \eqref{3.50} and \eqref{3.51} also hold for $A$ satisfying \eqref{alter lambda}, \eqref{alter Lambda}, $\mathcal{F}$ and $B$ satisfying (i) of Theorems \ref{Th1.3} and \ref{Th3.1}.
\end{remark}

\subsection{H\"older estimates}\label{subsection3.3}

In this subsection, we will prove a H\"older estimate for admissible functions $u$ of the augmented Hessian equation \eqref{1.1} in the cones $\Gamma_k$ for $k>n/2$, when $A\ge O(|p|^2)I$. For $M[u]\in \Gamma_k$ for $n/2 < k \le n$, we have
\begin{equation}\label{k-Hessian equation}
F_{k}(M[u])>0, \quad {\rm in}\  \Omega,
\end{equation}
where the operator $F_{k}=({S_k})^{\frac{1}{k}}$.
Here the condition $A\ge O(|p|^2)I$ is interpreted as the natural quadratic structure condition from below, namely
\begin{equation}\label{quadratic structure of A}
A(x,z,p) \ge -\mu_0 (1+|p|^2)I,
\end{equation}
for all $x\in \Omega$, $|z|\le M$, $p\in \mathbb{R}^n$, and some positive constant $\mu_0$ depending on the constant $M$. The one-sided quadratic condition \eqref{quadratic structure of A} has already been used for the gradient estimate in the Monge-Amp\`ere case, for the Dirichlet problem in \cite{JTY2013} and the Neumann and oblique problems in \cite{JTX2015}.

\begin{Lemma}\label{Lemma 3.1}
Let $u\in C^2(\Omega)$ satisfy $M[u]\in \Gamma_k$ for $n/2<k\le n$ where the matrix $A$ satisfies  \eqref{quadratic structure of A}.
 Then for any ball $B_R = B_R(y)$, with centre $y \in \bar\Omega$, $x\in \Omega_R:=B_R\cap\Omega$ and $\alpha = 2-n/k$, we have the estimate
\begin{equation}\label{Holder 1}
|u(x) - u(y)|\le C |x-y|^\alpha (R^{-\alpha}\mathop{\rm osc}_{\Omega_R}u  +1),
\end{equation}
provided one of the following holds:
\begin{itemize}
\item[(i):] $B_R\subset\Omega$, $u\in C^0(\bar \Omega_R)$ and the constant $C$ depends on $n,k,\mu_0$, $\mathop{\rm osc}\limits_{\Omega_R}u$ and {\rm diam} $\Omega$;
\item[(ii):] $\Omega \in C^2$ is convex, $u\in C^1(\bar \Omega_R)$ and $C$ depends additionally on $\Omega$ and $\inf\limits_{B_R\cap\partial\Omega} D_\nu u$;
\item[(iii):] $u\in C^0(\bar\Omega_R)\cap C^{0,\alpha}(\bar B_R\cap \partial\Omega)$ and $C$ depends additionally on  $[u]_{\alpha; B_R\cap\partial\Omega}$.
\end{itemize}

\end{Lemma}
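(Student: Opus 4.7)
The approach I would take is comparison with the fundamental solution of the pure $k$-Hessian equation, namely the power $|x-y|^\alpha$ with $\alpha=2-n/k$, suitably regularized and corrected to absorb the gradient-quadratic lower bound on $A$. The starting observation is that since $M[u]=D^2u-A\in\Gamma_k$ and $A\ge -\mu_0(1+|Du|^2)I$, the matrix
\begin{equation*}
D^2u+\mu_0(1+|Du|^2)I\in\Gamma_k,
\end{equation*}
because $\Gamma_k$ is closed under addition of nonnegative matrices. This converts a statement involving $A$ into a pure one-sided $k$-Hessian inequality with a gradient-quadratic perturbation depending only on $\mu_0$.

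The heart of the argument is the barrier computation. A direct calculation shows that $\phi(x)=|x-y|^\alpha$ has eigenvalues $\alpha(\alpha-1)r^{\alpha-2}$ (radial, negative since $\alpha<1$) and $\alpha r^{\alpha-2}$ with multiplicity $n-1$ (tangential, positive), and for $\alpha=2-n/k$ an elementary binomial identity gives $S_j(D^2\phi)>0$ for $1\le j<k$ and $S_k(D^2\phi)=0$, so $\phi$ sits on the boundary of $\Gamma_k$. I would then take the regularized barrier
\begin{equation*}
w(x)=u(y)+c\bigl((|x-y|^2+\delta^2)^{\alpha/2}-\delta^\alpha\bigr),
\end{equation*}
and compare with $u$. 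At any interior maximum $x_0$ of $u-w$ one has $Du(x_0)=Dw(x_0)$ and $D^2u(x_0)\le D^2w(x_0)$, hence
\begin{equation*}
M[u](x_0)\le D^2w(x_0)+\mu_0\bigl(1+|Dw(x_0)|^2\bigr)I.
\end{equation*}
The crucial verification is then that the right-hand side has $S_k\le 0$ for $c$ chosen of order $R^{-\alpha}\mathrm{osc}_{\Omega_R}u+1$; this contradicts $M[u]\in\Gamma_k$ and forces $u\le w$ throughout $\Omega_R$, which is exactly \eqref{Holder 1} after sending $\delta\downarrow 0$. The lower bound $u(x)-u(y)\ge -C|x-y|^\alpha(\cdots)$ follows by the symmetric comparison with $-w$, noting that we only used the $\Gamma_k$-admissibility from one side together with the lower bound on $A$, both of which are invariant under the relevant reflection.

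The boundary handling distinguishes the three cases. In case (i) the comparison takes place purely in $B_R\subset\Omega$, so the barrier needs only dominate $u-u(y)$ on $\partial B_R$, which is achieved by $c=C(\mathrm{osc}_{\Omega_R}u)R^{-\alpha}+C$. In case (ii) the convexity of $\Omega$ lets one reflect across $\partial\Omega$ and use $\inf_{B_R\cap\partial\Omega}D_\nu u$ to control the barrier along the boundary portion of $\partial\Omega_R$; specifically one lifts $w$ by a linear term tangent at $y$ that dominates $u$ on $B_R\cap\partial\Omega$. In case (iii) the given H\"older modulus of $u$ on $B_R\cap\partial\Omega$ is directly compatible with the power barrier, and one only has to match constants.

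The main obstacle will be carrying through the barrier verification in the presence of the $\mu_0(1+|Dw|^2)I$ correction: the regularized barrier has $|Dw|\sim c\alpha(|x-y|^2+\delta^2)^{(\alpha-1)/2}$, which blows up near $y$, and absorbing the corresponding $\mu_0|Dw|^2$ term into the negativity of the radial eigenvalue of $D^2w$ requires a delicate choice of $\delta$ relative to $c$ and to $\mathrm{diam}\,\Omega$. This is where the dependence of $C$ on $\mathrm{diam}\,\Omega$ (case (i)) and on the boundary data (cases (ii), (iii)) enters: one needs $\delta$ small enough that the $S_k\le 0$ estimate survives on the entire punctured ball, but not so small that the correction blows up uncontrollably. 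A secondary technical point is that the maximum principle/comparison being invoked is for $C^2$ functions against a $C^2$ regularized barrier, so no distributional theory of $k$-Hessian measures is needed; this keeps the proof self-contained at the cost of the explicit $\delta$-regularization.
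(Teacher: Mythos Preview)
Your barrier is the right one, but the direct comparison of $u$ with $w$ does not close. At an interior maximum of $u-w$ you would need
\[
S_k\!\bigl(D^2w+\mu_0(1+|Dw|^2)I\bigr)\le 0.
\]
The radial eigenvalue of $D^2w$ is $c\alpha(\alpha-1)r^{\alpha-2}$ and the gradient correction contributes $\mu_0 c^2\alpha^2 r^{2\alpha-2}$ to every eigenvalue; their ratio is $\mu_0 c\alpha r^{\alpha}/(1-\alpha)$. Near $y$ the Hessian wins, but at $r\sim R$ the correction wins as soon as $cR^\alpha>(1-\alpha)/(\mu_0\alpha)$, and you are forced to take $cR^\alpha\gtrsim\mathrm{osc}_{\Omega_R}u$ to dominate $u$ on $\partial B_R$. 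So unless the oscillation is already small, all eigenvalues become positive and $S_k>0$. No choice of the regularization parameter $\delta$ rescues this: the obstruction is at the outer edge of the ball, not at the singularity.

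The paper resolves this by an exponential substitution that decouples the gradient absorption from the size of $c$. One compares
\[
v=e^{K(u-u(y))}+\tfrac{a}{2}|x-y|^2-1 \quad\text{with}\quad \Phi=c|x-y|^\alpha,
\]
and works with the linearized operator $S_k^{ij}=\partial S_k/\partial r_{ij}$ evaluated at a perturbation $\tilde\Phi$ of $\Phi$ with $D^2\tilde\Phi\in\Gamma_k$. At a maximum of $v-\Phi$, the term $D^2v$ produces $K\tilde K\,Du\otimes Du$ with $\tilde K=Ke^{K(u-u(y))}$; after rotating so that $x_0-y$ is along $e_1$ one has $Du(x_0)\parallel e_1$ and, since $\tilde\Phi_{11}$ is the negative (or smallest) eigenvalue of $D^2\tilde\Phi$, the F7-type inequality $S_k^{11}\ge \delta(n,k)\,\mathscr T_{S_k}$ holds. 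This gives $K\tilde K S_k^{ij}u_iu_j\ge K\tilde K\delta(n,k)\mathscr T_{S_k}|Du|^2$, which dominates $\mu_0\tilde K(1+|Du|^2)\mathscr T_{S_k}$ once $K>\mu_0/\delta(n,k)$. The constant $K$ depends only on $n,k,\mu_0$, and \emph{then} $c$ can be taken as large as needed.

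Two smaller points. The sentence ``the lower bound follows by the symmetric comparison with $-w$'' is not right: $M[u]\in\Gamma_k$ is not invariant under $u\mapsto -u$; the two-sided estimate comes from re-centering the one-sided bound. And in case (ii) there is no reflection; one replaces $u$ by $\tilde u=u-\kappa\phi$ so that $D_\nu\tilde u>0$, and then the convexity of $\Omega$ gives $(x_0-y)\cdot\nu(x_0)\le 0$, which together with $D_\nu(v-\Phi)>0$ rules out a boundary maximum by a Hopf-type argument.
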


\begin{proof}
First we consider the interior case (i). For any ball $B_R=B_R(y)\subset \Omega$, we need to compare the following two functions in $B_R$,
\begin{equation}\label{v function}
v(x)=e^{K(u(x)-u(y))} + \frac{a}{2} |x-y|^2-1,
\end{equation}
and
\begin{equation}\label{phi function}
\Phi(x)= c |x-y|^\alpha, \quad \alpha = 2-\frac{n}{k},
\end{equation}
where $n/2<k\le n$, $K$, $a$ and $c$ are positive constants to be determined. By direct calculation, we first observe that
\begin{equation}
F_{k}(D^2\Phi)=0, \quad {\rm for} \ x\neq y.
\end{equation}
Denoting $\tilde \Phi= \Phi + \frac{\epsilon}{2}|x-y|^2$ for some positive constant $\epsilon<1$, then the perturbation function $\tilde \Phi$ of $\Phi$ satisfies $D^2\tilde \Phi \in \Gamma_k$ in $B_R$. Fixing a constant $\rho < \min\{R,1\}$, it is readily checked that $F_k(\tilde \Phi)$ is a strictly decreasing function with respect to $|x-y|$ for $x\in B_R\backslash B_\rho$, and hence
\begin{equation}\label{Fk decreasing in r}
F_k(D^2\tilde \Phi)< F_k(D^2 \tilde \Phi)|_{x\in \partial B_\rho}\le C(n,k,c)\left(\frac{\epsilon}{\rho^{{n(k-1)}/{k}}}\right)^{\frac{1}{k}}, \quad {\rm for}\  x\in B_R\backslash B_\rho,
\end{equation}
where $B_\rho:=B_\rho(y)$, $C(n,k,c)$ is a positive constant depending on $n$, $k$ and $c$. By introducing
\begin{equation}
S^{ij}_{k}:=S^{ij}_{k}(D^2\tilde \Phi)=\frac{\partial S_{k}(D^2\tilde \Phi)}{\partial\tilde \Phi_{ij}}, \quad {\rm and} \ \  F^{ij}_{k}:=F^{ij}_{k}(D^2\tilde \Phi)=\frac{\partial F_{k}(D^2\tilde \Phi)}{\partial\tilde \Phi_{ij}},
\end{equation}
we have
\begin{equation}\label{relation between Sk and Fk}
S_{k}^{ij}=kF_{k}^{k-1}(D^2\tilde \Phi)F_{k}^{ij}.
\end{equation}
We also denote $\mathscr{T}_{S_{k}} = {\rm trace}(S_{k}^{ij})$ = $(n-k+1)S_{k-1}$. Since $D^2\tilde \Phi \in \Gamma_k$ in $B_R$, we have $\{F_{k}^{ij}\}>0$ and $\{S_{k}^{ij}\}>0$ in $B_R$.

For our desired comparison of the functions $v$ in \eqref{v function} and $\Phi$ in \eqref{phi function} on $B_R$, we shall first compare them on $B_R\backslash B_\rho$ for a fixed $\rho$, and then send $\rho$ to $0$ in the end.
For convenience of later discussion, we now introduce some notation and fix some constants in advance. We denote
\begin{equation}\label{tilde K M tilde K m}
\omega_R = \mathop{\rm osc}\limits_{B_R}u, \quad \tilde K_M = Ke^{K\omega_R},\quad \tilde K_m = Ke^{-K\omega_R},
\end{equation}
and
\begin{equation}\label{delta n k}
\delta(n,k)=\left\{
\begin{array}{ll}
{1}/{(n-k+1)}, & {n}/{2}<k<n,\\
{1}/{n},     & k=n.
\end{array}
\right.
\end{equation}
We can fix the constant $K$ large such that $K> \mu_0/\delta(n,k)$, and fix the constant $a$ such that $a> 1 +\mu_0\tilde K_M$. For fixed $K$, by choosing 
$$c\ge R^{\frac{n}{k}}[R^{-2}(e^{K\omega_R-1)}+\frac{a}{2}],$$
we have $v-\Phi\le 0$ on $\partial B_R$. Now the constant $c$ has been fixed as well.
For fixed $\rho$ and $c$, by choosing $\epsilon$ sufficiently small such that the quantity on the right hand side of \eqref{Fk decreasing in r} is sufficiently small, we can have
\begin{equation}\label{perturbed Hessian}
F_{k}(D^2\tilde \Phi)<  \tilde K_m\inf_{\Omega}F_{k}(M[u]), \quad {\rm for} \ x\in B_{R}\backslash B_\rho,
\end{equation}
where $\tilde K_m$ is the constant defined in \eqref{tilde K M tilde K m}.
If $v-\Phi$ attains its maximum over $\overline {B_R\backslash B_\rho}$ at a point $x_0\in B_R \backslash B_\rho$, then we have $D(v-\Phi)=0$ and $D^2(v-\Phi) \le 0$ at $x_0$, namely
\begin{equation}\label{max interior equ 1}
u_i (x_0)={\tilde K}^{-1}[-a+\alpha c|x_0-y|^{-\frac{n}{k}}][(x_0)_i -y_i],\ {\rm for}\ i=1,\cdots,n,
\end{equation}
and
\begin{equation}\label{max interior equ 2}
S_{k}^{ij}D_{ij}(v-\Phi)\le 0,\quad {\rm at} \ x_0,
\end{equation}
where $\tilde K= Ke^{K(u(x_0)-u(y))}$.
Without loss of generality, by rotation of the coordinates, we can assume $x_0-y = ((x_0)_1 -y_1, 0, \cdots, 0)$. From \eqref{max interior equ 1}, we have $Du(x_0)=(u_1(x_0), 0, \cdots, 0)$. By calculation, we have
\begin{equation}
D^2\Phi(x_0) = \alpha c|x_0-y|^{-\frac{n}{k}}{\rm diag} (1-\frac{n}{k}, 1, \cdots, 1),
\end{equation}
which has a negative eigenvalue $\Phi_{11}(x_0)$ when $n/2<k< n$ and has a null eigenvalue $\Phi_{11}(x_0)$ when $k=n$. Correspondingly, for small $\epsilon$, the perturbed Hessian $D^2\tilde \Phi(x_0)=D^2\Phi(x_0)+\epsilon I$ is diagonal, and has a negative eigenvalue $\tilde \Phi_{11}(x_0)$ when $n/2<k< n$ and has a least positive eigenvalue $\tilde \Phi_{11}(x_0)$ when $k=n$. Note also that the matrices $\{F_{k}^{ij}\}$ and $\{S_{k}^{ij}\}$ are diagonal at $x_0$. From the properties of the $k$-Hessian operator and the Monge-Amp\`ere operator, we have
\begin{equation}\label{F7 property for Sk}
S_k^{11} \ge \delta(n,k) \mathscr{T}_{S_k},
\end{equation}
where $\delta(n,k)$ is defined in \eqref{delta n k}.

By our choices of the constants $K$ and $a$, from $w_{ij}=u_{ij}-A_{ij}$, \eqref{quadratic structure of A}, \eqref{perturbed Hessian}, \eqref{relation between Sk and Fk}, \eqref{F7 property for Sk}, and the concavity and homogeneity of $F_{k}$, we have, at $x_0$,
\begin{equation}\label{max interior equ 3}
\begin{array}{rl}
  S_k^{ij}D_{ij}(v-\Phi)=    \!\!&\!\! \displaystyle S_k^{ij}[K\tilde K u_iu_j + \tilde K A_{ij} + (a-\epsilon)\delta_{ij} + \tilde K w_{ij}-\tilde \Phi_{ij}] \\
  \ge  \!\!&\!\! \displaystyle K\tilde K S_k^{11}u_1^2 + S_k^{ij}[-\mu_0 \tilde K(1+|Du|^2)\delta_{ij} + (a-\epsilon)\delta_{ij}] \\
       \!\!&\!\! \displaystyle + kF_k^{k-1}(D^2\tilde \Phi)F_k^{ij}[\tilde K w_{ij}-\tilde \Phi_{ij}] \\
  \ge  \!\!&\!\! \displaystyle \tilde K_m (K\delta(k,n)-\mu_0)\mathscr{T}_{S_k}|Du|^2 + (a-\epsilon-\mu_0\tilde K_M)\mathscr{T}_{S_k} \\
       \!\!&\!\! \displaystyle + kF_k^{k-1}(D^2\tilde \Phi)[\tilde K_m F_k(M[u])-F_k(D^2\tilde \Phi)] \\
  >    \!\!&\!\! \displaystyle 0.
\end{array}
\end{equation}
The contradiction from \eqref{max interior equ 2} and \eqref{max interior equ 3} shows that $v-\Phi$ must take its maximum over $\overline {B_R\backslash B_\rho}$ at the boundary $\partial B_R$ or $\partial B_\rho$. Therefore, we have
\begin{equation}\label{comparison on B_R - B_rho}
v-\Phi \le \max\{0,\sup_{B_\rho}(v-\Phi)\}, \quad {\rm for}\  x\in \overline {B_R\backslash B_\rho}.
\end{equation}
We observe that we can choose $\epsilon$ as small as we want in \eqref{max interior equ 3}. Letting $\epsilon\rightarrow 0$ in \eqref{max interior equ 3}, the inequality \eqref{comparison on B_R - B_rho} can hold for $\rho$ as small as we want. Sending $\rho\rightarrow 0$ and using the forms of $v$ in \eqref{v function} and $\Phi$ in \eqref{phi function}, the right hand side of \eqref{comparison on B_R - B_rho} tends to $0$. Correspondingly, we have from \eqref{comparison on B_R - B_rho},
\begin{equation}
v \le \Phi, \quad {\rm for} \ x\in \bar B_R,
\end{equation}
namely
\begin{equation}\label{interior comparison}
u(x)-u(y)\le \frac{1}{K} \log (1+c|x-y|^\alpha), \quad {\rm for}\   x\in\bar B_R,
\end{equation}
and hence assertion (i) follows from the estimate, $c\le R^{-\alpha} \omega_R \tilde K_M$.
\begin{remark}\label{Remark3.7}
In the above argument, we use the perturbation $\tilde \Phi=c|x-y|^\alpha+ \frac{\epsilon}{2}|x-y|^2$ of the function $\Phi=c|x-y|^\alpha$. We remark that there are alternative perturbations that can be used here.  For instance, we can choose a perturbation in the form, $\tilde \Phi= c(|x-y|^2+\epsilon^2)^{\frac{\alpha}{2}}$ for small $\epsilon$.
\end{remark}
If we consider  admissible functions $u$ of equation \eqref{1.1} in the cones $\Gamma_k$ for $k>n/2$ satisfying various boundary conditions, we can also have similar comparison in \eqref{interior comparison} locally near the boundary. For the Neumann case (ii), we suppose $B_R$ intersects $\partial\Omega$, with $\Omega_R$ convex and  $D_\nu u > - \kappa$ on $\partial\Omega\cap B_R$, where $\kappa$ is a nonnegative constant.
 Defining $\tilde u= u -\kappa\phi$, where $\phi$ is a negative defining function of $\Omega$, as in Section \ref{subsection3.1} and \ref{subsection3.2}, satisfying $\phi<0$ in $\Omega$, $\phi=0$ on $\partial\Omega$ and $D_\nu \phi=-1$ on $\partial\Omega$, then we have $D_\nu \tilde u > 0$ on $\partial\Omega\cap B_R$. Using $\tilde u$ in place of $u$ in \eqref{v function}, we then compare the replaced function $v$ in \eqref{v function} and the function $\Phi$ in \eqref{phi function} on $\bar \Omega_R$. Similar to the interior case, we begin with our discussion on $\overline {(B_R\backslash B_\rho)\cap \Omega}$ for a fixed $\rho<\min\{R,1\}$, where $B_\rho:=B_\rho(y)$. If the maximum of $v-\Phi$ takes its maximum over $\overline {(B_R\backslash B_\rho)\cap \Omega}$ at a point $x_0\in (B_R\backslash B_\rho)\cap \Omega$, by choosing a larger constant $a$ depending additionally on $\kappa$, $|\phi|_{2;\Omega}$, we can obtain the same inequality as in \eqref{max interior equ 3} and get a contradiction with \eqref{max interior equ 2}. Therefore, the possibilities that maximum point $x_0$ of $v-\Phi$ occurs are on $(B_R\backslash B_\rho)\cap \partial\Omega$, $\partial B_R \cap \Omega$ or $\partial B_\rho\cap \Omega$. If $x_0\in (B_R\backslash B_\rho)\cap \partial\Omega$, from the convexity of $\Omega_R$, we have $(x_0-y)\cdot \nu(x_0) \le 0$. Then we have, at $x_0$,
\begin{equation}\label{contradiction on the boundary}
\begin{array}{rl}
0 \!\!&\!\!\displaystyle \ge D_\nu (v-\Phi)(x_0) \\
  \!\!&\!\!\displaystyle =   Ke^{K(\tilde u(x_0)-\tilde u(y))}D_\nu \tilde u(x_0) + [a-\alpha c|x_0-y|^{-\frac{n}{k}}](x_0-y)\cdot \nu(x_0) \\
  \!\!&\!\!\displaystyle > 0,
\end{array}
\end{equation}
by using $D_\nu \tilde u> 0$ on $\partial\Omega$, and choosing the constant $c$ large such that $c\ge aR^{\frac{n}{k}}/\alpha$. Then \eqref{contradiction on the boundary} leads to a contradiction and excludes the case that the maximum of $v-\Phi$ occurs at $(B_R\backslash B_\rho)\cap \partial\Omega$. By fixing the defining function $\phi$ such that $\phi>-1/\kappa$, we have $\kappa(\phi(x)-\phi(y))>-1$ for $x\in\partial B_R\cap\Omega$ and $y\in\partial\Omega$. With this property of the defining function, now by choosing $c$ larger again such that 
$$c\ge R^{\frac{n}{k}}[R^{-2}(e^{K(\omega_R+\kappa \mathop{\rm osc}\limits_{\Omega_R}\phi)}-1)+\frac{a}{2}],$$
 we have $v-\Phi\le 0$ on $\partial B_R\cap\Omega$. Similarly to \eqref{comparison on B_R - B_rho} of the interior case, with $\mu_0$ and $K$ appropriately adjusted, we now have
\begin{equation}
v-\Phi \le \max \{0, \sup_{B_\rho\cap \Omega}(v-\Phi)\}, \quad {\rm for}\ x\in \overline {(B_R\backslash B_\rho)\cap \Omega}.
\end{equation}
Therefore, by successively passing $\epsilon$ and $\rho$ to $0$, the same inequality \eqref{interior comparison} holds on $\bar \Omega_R$ and hence assertion (ii) is proved.

Finally for  Dirichlet  boundary values, as in case (iii), we suppose again that $B_R$ intersects
$\partial\Omega$ and  $u\in C^0(\bar \Omega_R) \cap C^{0,\alpha}(\bar B_R\cap \partial\Omega)$ so that
\begin{equation}\label{Holder x y}
|u(x) - u(y)|\le \kappa |x-y|^\alpha,
\end{equation}
for all $x,y\in B_R\cap \partial\Omega$ for some non-negative constant $\kappa = [u]_{\alpha;B_R\cap\partial\Omega}$. Assume first that the centre  $y\in\partial \Omega$. Then proceeding as in the previous case we need to compare $v$ and $\Phi$ on $(B_R\backslash B_\rho)\cap \partial\Omega$. Accordingly we now obtain
\begin{equation}\label{Dirichlet analyse for Holder}
\begin{array}{rl}
v-\Phi \!\!&\!\!\displaystyle = e^{K(u(x)-u(y))}-1 + \frac{a}{2}|x-y|^2 - c|x-y|^\alpha \\
       \!\!&\!\!\displaystyle \le \kappa Ke^{K\omega^\prime_R} |x-y|^\alpha+ \frac{a}{2}|x-y|^2 - c|x-y|^\alpha\\
       \!\!&\!\!\displaystyle \le 0, \quad\quad\quad\ {\rm for} \ x\in(B_R\backslash B_\rho)\cap \partial\Omega,
\end{array}
\end{equation}
by taking $c$ larger such that 
$$c\ge \kappa Ke^{K\omega^\prime_R}+
 \frac{a}{2}R^{2-\alpha}, $$
where $\omega^\prime_R= \mathop{\rm osc}\limits_{\partial\Omega\cap B_R}u$. With $K$ and $c$ also chosen as in case (i), with $B_R$ replaced by $\Omega_R$,   we arrive again at   \eqref{interior comparison} on $\bar \Omega_R$.

The general case, $y\in \Omega$, in case (iii), now follows by combining the case, $y\in\partial\Omega$ with the interior estimate, \eqref {Holder 1} in case (i), as in Theorem 8.29 in \cite{GTbook}.
\end{proof}

For convex domains, Lemma \ref{Lemma 3.1} extends the gradient bound, Lemma 3.2 in \cite{JTX2015}, for the case $k = n$. More generally it provides a modulus of continuity estimate for  solutions of \eqref{1.1} that are  admissible in $\Gamma_k$ for $k>n/2$. Combining with the local gradient estimate in Theorem \ref{Th3.1}, the estimate \eqref{3.50} can hold by extending ``$o$'' to ``$O$'' in \eqref{quadratic growth} and \eqref{3.32}. For a convex domain $\Omega$, the estimate \eqref{3.51} can still hold for the semilinear Neumann problems in case (ii) of Theorem \ref{Th3.1} by extending ``$o$'' to ``$O$'' in \eqref{quadratic growth} and \eqref{3.32}.

%%%%%%%%%%%%%%%%%%%%%%%%%%%%%%%%%%%%%%%%%%%%%%%%%%%%%%%%%%%%%%%%%%%%%%%%%%%%%%%%%%%%%%%%%%%%%%%%%%%%%%%%%%%%%%%%%%%%%%%%%%%%%%%%%%%%%%%

\section{Existence and applications}\label{Section 4}
\vskip10pt

In this section, we present some  existence results for classical solutions based on our first and second derivative {\it a priori} estimates for admissible solutions for the oblique boundary value problem \eqref{1.1}-\eqref{1.2}. We  also give various examples  of equations and boundary conditions satisfying our conditions and also show that our theory can be extended to embrace
$C^{1,1}$ solutions of degenerate equations.

\subsection{Existence theorems}\label{subsection4.1}

We assume that $\underline u$ and $\bar u$ in $C^2(\Omega)\cap C^1(\bar \Omega)$ are respectively an admissible subsolution and supersolution of the boundary value problem \eqref{1.1}-\eqref{1.2}, satisfying the inequalities \eqref{sub super F} and \eqref{sub super G}, with  $\mathcal F$ satisfying F1 and  $\mathcal G$ oblique. Under the assumptions $A$, $B$ and $G$ are non-decreasing in $z$, with at least one of them strictly increasing, if $u \in C^2(\Omega)\cap C^1(\bar \Omega)$ is an admissible solution of the problem \eqref{1.1}-\eqref{1.2}, by the comparison principle, we have
\begin{equation}\label{solution bound}
\underline u \le u \le \bar u, \quad \ {\rm in}\ \bar\Omega.
\end{equation}
For our purposes here we note that the comparison principle, as formulated in Lemma 3.1 in \cite{JTX2015}, extends automatically to operators $\mathcal F$ satisfying F1.
Then \eqref{solution bound} provides the solution bound and the interval $\mathcal{I}=[\underline u, \bar u]$ for the convexity definition \eqref{uniform convexity}.

With the {\it a priori} estimates up to second order, we can formulate  existence results for the classical admissible solutions of the oblique boundary value problems \eqref{1.1}-\eqref{1.2}. We consider first the case when the matrix $A$ is strictly regular and the boundary operator $\mathcal{G}$ is semilinear.
\begin{Theorem}\label{Th4.1}
Assume that $F$ satisfies conditions F1-F4 in the cone $\Gamma\subset \mathcal{P}_{n-1}$, $\Omega$ is a $C^{3,1}$ bounded domain in $\mathbb{R}^n$, $A\in C^2(\bar \Omega\times \mathbb{R}\times \mathbb{R}^n)$ is strictly regular in $\bar\Omega$, $B > a_0, \in C^2(\bar \Omega\times \mathbb{R}\times \mathbb{R}^n)$, $\mathcal G$ is semilinear and oblique with $G\in C^{2,1}(\partial\Omega\times \mathbb{R}\times\mathbb{R}^n)$, $\underline u$ and $\bar  u$, $\in C^2(\Omega)\cap C^1(\bar \Omega)$ are respectively an admissible subsolution and a supersolution of the oblique boundary value problem \eqref{1.1}-\eqref{1.2} with $\Omega$ uniformly $(\Gamma,A,G)$-convex with respect to the interval $\mathcal I = [\underline u, \bar u]$.
Assume also that $A$, $B$ and $\varphi$ are non-decreasing in $z$, with at least one of them strictly increasing, and that $A$ and $B$ satisfy the quadratic growth conditions \eqref{quadratic growth}. Assume either F5\textsuperscript{+} holds or $B$ is independent of $p$. Then if one of the following further conditions is satisfied:
\begin{itemize}
\item[(i):]  $A$ is uniformly regular and either {\rm (a)} F5\textsuperscript{+}, with $b=\infty$, or {\rm (b)} F5, with $b=\infty$ and $B -p\cdot D_pB\le o(|p|^2)$ in \eqref{quadratic growth};
\item[(ii):] $\beta=\nu$, $\mathcal{F}$ is orthogonally invariant and satisfies F7 and either (a) $A = o(|p|^2)$  in \eqref{quadratic growth} or (b) $\Gamma\subset\Gamma_k$ with $k>n/2$ and $\Omega$ is convex,
\end{itemize}
there exists a unique admissible solution $u \in C^{3,\alpha}(\bar \Omega)$ of the boundary value problem \eqref{1.1}-\eqref{1.2} for any $\alpha<1$.
\end{Theorem}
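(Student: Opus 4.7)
My plan is to use the classical method of continuity together with Evans--Krylov and Schauder boundary regularity, after assembling full \emph{a priori} $C^{2,\alpha}$ estimates from the results already proved. The $C^0$ bound comes from the comparison principle applied under F1 (as in Lemma~3.1 of \cite{JTX2015}), which forces any admissible solution to lie in the interval $[\underline u,\bar u]$; this also confines the one-jet of $u$ into the interval $\mathcal I$ on which the uniform $(\Gamma,A,G)$-convexity of $\Omega$ is assumed. The $C^1$ bound is supplied by Theorem~\ref{Th1.3}(i) in case (i) and by Theorem~\ref{Th1.3}(ii) in case (ii) when $A=o(|p|^2)$; in the remaining subcase of (ii) with $\Gamma=\Gamma_k$, $k>n/2$ and $\Omega$ convex, I would first apply the H\"older estimate of Lemma~\ref{Lemma 3.1}(ii) to obtain an \emph{a priori} modulus of continuity, and then invoke the ``$O$''-version of the local gradient estimate in Theorem~\ref{Th3.1}, whose smallness-of-oscillation hypothesis is satisfied on sufficiently small balls by the H\"older bound. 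The second derivative bound is then Theorem~\ref{Th1.2}, whose hypotheses are met since $G$ is semilinear (hence quasilinear and concave in $p$), $A$ is strictly regular, and F1--F5 hold.

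With a uniform $C^2$ bound in hand the augmented Hessian $M[u]$ stays in a compact subset of $\Gamma$, so equation \eqref{1.1} is uniformly elliptic along the solution. The concavity F2 then permits the use of interior Evans--Krylov estimates, while the corresponding boundary estimates for concave fully nonlinear elliptic equations with oblique boundary conditions (due to Lieberman--Trudinger) upgrade this to $u\in C^{2,\alpha}(\bar\Omega)$ for some $\alpha\in(0,1)$. Differentiating the equation once, which is legitimate because $A,B,G\in C^{2,1}$ and $\partial\Omega\in C^{3,1}$, and applying linear Schauder theory for oblique problems then yields $u\in C^{3,\alpha}(\bar\Omega)$ for every $\alpha<1$.

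For existence I would set up a one-parameter family of problems connecting a trivially solvable problem at $t=0$ to the target at $t=1$, chosen so that $\underline u,\bar u$ persist as sub/supersolutions, strict regularity and the uniform convexity condition are preserved, and all structural hypotheses of Steps~1--2 hold uniformly in $t\in[0,1]$. The set of $t\in[0,1]$ admitting an admissible $C^{3,\alpha}$ solution is then nonempty, closed by the uniform $C^{3,\alpha}$ bounds and Arzel\`a--Ascoli, and open by the implicit function theorem in $C^{2,\alpha}(\bar\Omega)$: invertibility of the linearised operator $\mathcal L$ in \eqref{linearized operators} paired with the oblique boundary operator follows from the standard Fredholm theory for oblique linear elliptic problems together with the strong maximum principle, which applies because at least one of $A,B,\varphi$ is strictly increasing in $z$. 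Uniqueness of the solution at $t=1$ likewise follows from this strict monotonicity by comparison.

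The main obstacle I anticipate is the design of the continuity family: it must simultaneously preserve the strict regularity of $A$, the uniform $(\Gamma,A,G)$-convexity of $\Omega$, the quadratic growth conditions, the structural hypotheses F1--F5 (plus F5\textsuperscript{+}, F7 as needed), and the sub/supersolution pair, while being solvable at $t=0$. A secondary subtlety is threading the H\"older bound of Lemma~\ref{Lemma 3.1} through the local gradient estimate of Theorem~\ref{Th3.1} in the $k>n/2$ convex subcase of (ii), so that the gradient bound closes without imposing an additional smallness hypothesis on the data.
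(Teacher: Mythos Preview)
Your proposal is correct and follows essentially the same route as the paper: assemble $C^0$ bounds from the comparison principle, $C^1$ bounds from Theorem~\ref{Th1.3} (supplemented in the $\Gamma_k$, $k>n/2$ convex subcase by the H\"older estimate Lemma~\ref{Lemma 3.1} fed into the ``$O$''-version of Theorem~\ref{Th3.1}), $C^2$ bounds from Theorem~\ref{Th1.2}, then $C^{2,\alpha}$ via the Lieberman--Trudinger oblique-boundary theory, followed by the method of continuity and linear Schauder theory for $C^{3,\alpha}$ regularity. The paper handles your anticipated obstacle about the continuity family by simply citing Theorems~17.22 and~17.28 of \cite{GTbook} and taking the subsolution $\underline u$ as the initial solution, rather than explicitly constructing a deformation that preserves all the structural hypotheses.
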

\begin{proof}
Under the assumptions of this theorem, we have  solution bounds \eqref{solution bound},  gradient estimates \eqref{1.21} from Section \ref{Section 3} and  second derivative estimates \eqref{1.10} by Theorem \ref{Th1.2}. Note that the gradient estimate when $\Gamma=\Gamma_k$ with $k>n/2$ and $\Omega$ is convex in case (ii) is obtained by combining the local gradient estimate in Theorem \ref{Th3.1} and the H{\" o}lder estimate in Lemma \ref{Lemma 3.1}, (see the last paragraph of Section \ref{Section 3}). From the uniformly elliptic theory, Theorem 3.2 in \cite{LT1986} or Theorem 1.1 in \cite{LieTru1986}, we can derive a global second derivative H\"older estimate
\begin{equation}\label{2nd Holder}
|u|_{2,\alpha;\Omega}\le C,
\end{equation}
for admissible solutions $u\in C^4(\Omega)\cap C^3(\bar \Omega)$ of the semilinear oblique boundary value problem \eqref{1.1}-\eqref{1.2} for $\alpha\in (0,1)$. With the $C^{2,\alpha}$ estimate, by choosing the subsolution $\underline u$ as an initial solution, we can employ the classical method of continuity, Theorem 17.22 and Theorem 17.28 in \cite{GTbook} to derive the existence of an admissible solution $u\in C^{2,\alpha}(\bar \Omega)$. Here, in order to preserve our subsolution and supersolution inequalities and guarantee the uniform {\it a priori} estimates, we need to consider the family of problems:
\begin{equation}\label{homotopy family}
\begin{array}{rl}
\mathcal{F}[u]=  \!\!&\!\!B(\cdot,u,Du) +(1-\sigma)\{\mathcal F[\underline u] - B(\cdot,\underline u,D\underline u)\} , \quad \mbox{in} \ \Omega,\\
\mathcal{G}[u] =\!\!&\!\! (1-\sigma)\mathcal G[\underline u] , \quad \mbox{on} \ \partial\Omega,
\end{array}
\end{equation}
for $0\le\sigma\le 1$. Further regularity follows from the Schauder approach and approximations for the standard linear elliptic theory in \cite[Chapter 6]{GTbook}, or the Aleksandrov-Bakel'man maximum principles and the $L^p$ regularity for the standard elliptic linear theory in \cite[Chapter 9]{GTbook}. The uniqueness readily follows from the comparison principle.
\end{proof}

With the above existence result for general operators, the existence for semilinear oblique problem \eqref{1.1}-\eqref{1.2} of the $k$-Hessian and Hessian quotient equations, $\mathcal{F}=F_{k,l}$ for $0\le l<k\le n, k>1$ in Theorem \ref{Th1.4}, is just a special case. The conditions in cases (i), (ii) in Theorem \ref{Th1.4} agree with those in (i), (ii) in Theorem \ref{Th4.1}, respectively. For case (iii) of Theorem \ref{Th1.4}, the gradient estimate follows from Lemma 3.2 in \cite{JTX2015}, while second derivative estimate is from Theorem \ref{Th1.2}. In the special case when $k = 1$, equation \eqref{1.1} reduces to a quasilinear Poisson equation, 
as the matrix $A$ can then be absorbed in the scalar $B$ and considerably more general results for arbitrary smooth domains $\Omega$ follow from the classical Schauder theory \cite{GTbook}. In particular we need only assume the quadratic growth, $B= O(|p|^2)$ as $|p| \rightarrow \infty$, uniformly for $x\in\Omega$, $|z|\le M$ for any $M > 0$, and under reduced smoothness hypotheses, $B\in C^{0,\alpha}(\bar\Omega\times \mathbb{R}\times \mathbb{R}^n)$, $\partial\Omega \in C^{1,\alpha}$, $\beta, \varphi \in C^{1,\alpha}(\partial\Omega)$, we infer the existence of a unique classical solution $u\in C^{2,\alpha}(\bar\Omega)$ of the semilinear oblique problem \eqref{1.1}-\eqref{1.2}.

Using  Lemma 4.1 in \cite{JTX2015} and the nonlinear case in Theorem \ref{Th1.2}, we can extend Theorem \ref{Th4.1} to cover nonlinear boundary operators in the case where $\Gamma$ is the positive cone $\Gamma_n$. For this we also need to assume that $\mathcal G$ is uniformly oblique in the sense that
\begin{equation}\label{G uniformly oblique}
G_p(x,z,p)\cdot\nu \ge \beta_ 0,\quad |G_p(x,z,p)| \le \sigma_0, \quad \mbox{on } \  \partial\Omega,
\end{equation}
for all $x\in \Omega$, $|z|\le M$, $p\in \mathbb{R}^n$ and  positive constants $\beta_0$ and $\sigma_0$, depending on the constant $M$. The following existence result, which is proved similarly to Theorem \ref{Th4.1}, extends
the Monge-Amp\`ere case, Theorem 4.2 in \cite{JTX2015} as well as case (iii) in Theorem \ref{Th1.4}.

\begin{Theorem}\label{Th4.2}
Assume that $F$ satisfies conditions F1-F4 and F6 in the positive cone $\Gamma_n$, $\Omega$ is a $C^{3,1}$ bounded domain in $\mathbb{R}^n$, $A\in C^2(\bar \Omega\times \mathbb{R}\times \mathbb{R}^n)$ is strictly regular in $\bar\Omega$, $B > a_0, \in C^2(\bar \Omega\times \mathbb{R}\times \mathbb{R}^n)$,  $G\in C^{2,1}(\partial\Omega\times \mathbb{R}\times\mathbb{R}^n)$ is concave with respect to $p$ and uniformly oblique in the sense of \eqref{G uniformly oblique}, $\underline u$ and $\bar  u$, $\in C^2(\Omega)\cap C^1(\bar \Omega)$ are respectively an admissible subsolution and a supersolution of the oblique boundary value problem \eqref{1.1}-\eqref{1.2} with $\Omega$ uniformly $(A,G)$-convex with respect to the interval $\mathcal I = [\underline u, \bar u]$.
Assume also that $A$, $B$ and $-G$ are non-decreasing in $z$,  with at least one of them strictly increasing and  $A$ satisfies the quadratic growth condition \eqref {quadratic structure of A}.  Assume either F5\textsuperscript{+} holds or $B$ is independent of $p$. Then there exists a unique admissible solution $u \in C^{3,\alpha}(\bar \Omega)$ of the boundary value problem \eqref{1.1}-\eqref{1.2} for any $\alpha<1$.
\end{Theorem}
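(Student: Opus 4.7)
My plan is to follow the overall architecture of the proof of Theorem~\ref{Th4.1}, replacing the two ingredients that are sensitive to the nonlinearity of $\mathcal{G}$: the gradient estimate, where the semilinear tools from Section~\ref{Section 3} no longer apply, and the second derivative estimate, where the quasilinear reduction in Section~\ref{Section 2} is no longer available. Since the hypotheses confine us to the positive cone $\Gamma_n$ with $A$ satisfying the one-sided quadratic structure condition \eqref{quadratic structure of A}, the natural substitutes are the tools already developed for the Monge--Amp\`ere-type case in \cite{JTX2015}.

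First, using that $\mathcal{F}$ satisfies F1 and that $A$, $B$, $-G$ are monotone in $z$ with at least one strict, the comparison principle (in the form noted after \eqref{solution bound}) yields $\underline u \le u \le \bar u$ in $\bar\Omega$ for any admissible solution $u$; this also fixes the interval $\mathcal I = [\underline u,\bar u]$ used in the uniform $(A,G)$-convexity hypothesis. Next, for the gradient estimate, I would invoke Lemma~4.1 of \cite{JTX2015}: since $M[u]\in \Gamma_n$ is positive definite and $A \ge -\mu_0(1+|p|^2)I$, the matrix $D^2 u$ is bounded below by $-\mu_0(1+|Du|^2)I$, and the standard barrier argument of that lemma, together with the uniform obliqueness \eqref{G uniformly oblique} of $\mathcal{G}$, yields $\sup_\Omega |Du| \le C$ in terms of $|u|_{0;\Omega}$, $\sigma_0$, $\beta_0$ and the data.

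With the $C^1$ bound in hand, the second derivative estimate follows from Theorem~\ref{Th1.2}: the hypotheses of that theorem in the nonlinear boundary case require F6 (which we have assumed), strict regularity of $A$, concavity of $G$ in $p$, uniform $(\Gamma,A,G)$-convexity of $\Omega$, and either F5\textsuperscript{+} or $B$ independent of $p$---all of which are in force. This gives $\sup_\Omega |D^2 u| \le C$. Once the full $C^2$ bound is available, $\{F^{ij}(M[u])\}$ has eigenvalues bounded above and below by positive constants on $\bar\Omega$, so \eqref{1.1} becomes a uniformly elliptic, concave fully nonlinear equation, and the boundary operator $G$ is uniformly oblique and concave in $p$. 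The global $C^{2,\alpha}$ estimate \eqref{2nd Holder} then follows from the Lieberman--Trudinger theory (Theorem~1.1 in \cite{LieTru1986}, together with Theorem~3.2 in \cite{LT1986} for the tangential boundary regularity in the nonlinear oblique case).

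Given the a priori bound $|u|_{2,\alpha;\bar\Omega} \le C$, existence of an admissible $u \in C^{2,\alpha}(\bar\Omega)$ is obtained by the method of continuity, taking $\underline u$ as the starting admissible function and deforming $(A,B,G)$ to the given data along a one-parameter family that preserves strict regularity, uniform obliqueness, concavity in $p$, the subsolution/supersolution pair, and the $(A,G)$-convexity; at each stage the linearised problem is uniformly oblique and uniformly elliptic on $C^{2,\alpha}$, and Theorems~17.22 and~17.28 of \cite{GTbook} close the argument. Higher regularity $u \in C^{3,\alpha}(\bar\Omega)$ comes from bootstrapping via the Schauder theory for linear oblique problems (\cite[Chapter~6]{GTbook}), and uniqueness is immediate from the comparison principle. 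The main obstacle I anticipate is verifying that the family used in the continuity method can be chosen so that uniform $(A,G)$-convexity and the existence of a compatible sub/supersolution pair persist along the path; this is handled, as in \cite{JTX2015}, by scaling $A$, $B$ and $G$ against the corresponding quantities for $\underline u$ rather than deforming them independently.
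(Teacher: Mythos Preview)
Your proposal is correct and follows essentially the same route as the paper: solution bounds via the comparison principle, the gradient estimate from Lemma~4.1 in \cite{JTX2015} (using $\Gamma=\Gamma_n$ together with the one-sided quadratic structure \eqref{quadratic structure of A} and uniform obliqueness), the nonlinear case of Theorem~\ref{Th1.2} under F6 for the second derivative bound, and then the same $C^{2,\alpha}$ estimate, continuity method, Schauder bootstrap and uniqueness argument as in the proof of Theorem~\ref{Th4.1}. The paper in fact presents Theorem~\ref{Th4.2} exactly as this variant of Theorem~\ref{Th4.1}, so your write-up is a faithful expansion of the argument.
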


\begin{remark}\label{Remark4.1}
Without the monotonicity, subsolution and supersolution hypotheses in Theorems \ref{Th4.1} and \ref{Th4.2} we can still obtain the existence of possibly non-unique, admissible solutions of the oblique boundary value problem \eqref{1.1}-\eqref{1.2}, using topological fixed point theorems, (cf. Theorem 11.6 in \cite{GTbook}), or  degree theory, (as in \cite{Urbas1995}),
 instead of the method of continuity, provided we have  {\it a priori} bounds for solutions of appropriate families such as 
 \eqref{homotopy family}, so that their ranges lie in fixed intervals $\mathcal I$, where $\partial\Omega$ is uniformly $(\Gamma,A,G)$-convex. Note also that the monotonicity conditions themselves may be relaxed somewhat to get the inequality \eqref{solution bound}. In particular we can strengthen the sub and super solution properties of $\underline u$ and $\bar u$ so that $D^2\underline u \ge A(x,z,D\underline u(x))$ and
\begin{equation}\label{monotone1}
\begin{array}{rll}
F[D^2\underline u-A(x,z,D\underline u(x))] \!\!&\!\!\displaystyle \ge B(x,z,D\underline u(x)), & \quad {\rm in} \ \Omega,\\
G(x,z,D\underline u(x)) \!\!&\!\!\displaystyle \ge 0, & \quad {\rm on}\ \partial\Omega,
\end{array}
\end{equation}
whenever $z<\underline u(x)$, with one of the inequalities in \eqref{monotone1} strict, and
\begin{equation}\label{monotone2}
\begin{array}{rll}
F[D^2\bar u-A(x,z,D\bar u(x))] \!\!&\!\!\displaystyle \le B(x,z,D\bar u(x)), & \quad {\rm in} \ \Omega,\\
G(x,z,D\bar u(x)) \!\!&\!\!\displaystyle \le 0, & \quad {\rm on}\ \partial\Omega,
\end{array}
\end{equation}
whenever $z>\bar u(x)$, $D^2\bar u\ge A(x,z,D\bar u(x))$, with one of \eqref{monotone2} strict. As a special case, if $\underline u=-K$, $\bar u=K$ for some constant $K$, we get $|u|\le K$; (see also \cite{JTX2015}, Section 3). Under this more general hypothesis we can then infer the existence of  admissible solutions in Theorems \ref{Th4.1} and \ref{Th4.2}. 
\end{remark}

\subsection{Examples}\label{subsection4.2}

In this subsection, we present various examples of operators $\mathcal{F}$, matrix functions $A$ and associated oblique boundary operators $\mathcal{G}$ which satisfy our hypotheses.

%%%%%%%%%%%%%%%%%%%%%%%%%%%%%%%%%%%%%%%%%%%%%%%% Examples for different kind of F %%%%%%%%%%%%%%%%%%%%%%%%%%%%%%%%%%%%%%%%%%%%%%%%%%%%%%%%%%%%%%%%%
\vskip8pt

{\it Examples for $\mathcal{F}$.}  As already indicated in Section \ref{Section 1}, our main examples are the $k$-Hessian operators  and their  quotients  $F_{k,l}$ ($0\le l<k\le n$), as considered in Theorem \ref{Th1.4}. For $0\le l<k\le n$, $F_{k,l}$ satisfy F1-F5, F7 in $\Gamma_k$ with $a_0=0$. We remark that $b$ in F5 can be a positive constant or $+\infty$. For $l=0$, the corresponding $k$-Hessian operators $F_k$, $2\le k\le n$ satisfy F1-F4, F5\textsuperscript{+} and F7 in $\Gamma_k$ with $a_0=0$. In this case the operators $F_k$ only satisfy F5\textsuperscript{+} for finite $b$ but not  infinite $b$. w. Note that the normalised Monge-Amp\`ere operator in the form $(\det)^{\frac{1}{n}}$ is also covered by $F_k$ when $k=n$. Another well known concave form of the Monge-Amp\`ere operator is $\log (\det)$, which satisfies F1-F4, F5\textsuperscript{+} in $K^+$ with $a_0=-\infty$. As stated in the introduction,  the $k$-Hessian operators $F_k$ ($k=1,\cdots,n$) and the Hessian quotients $F_{n,l}$ ($1\le l \le n-1$) satisfy F6 in the positive cone $K^+$. If $\mathcal{F}$ is an operator satisfying F1, F2, F3, F5\textsuperscript{+}, with finite $a_0$,  then F6 holds in the positive cone $K^+$, since
\begin{equation}\label{verification of F6}
\begin{array}{rl}
\mathcal{E}_2 \!\!&\!\!\displaystyle \le r\cdot F_r |r|\\
              \!\!&\!\!\displaystyle \le (F(r)-a_0)|r|\\
              \!\!&\!\!\displaystyle = (B-a_0)|r|\\
              \!\!&\!\!\displaystyle \le o(|r|)\mathscr{T}, \quad\quad {\rm as} \  |r|\rightarrow \infty,
\end{array}
\end{equation}
where the property of $K^+$ is used in the first inequality, \eqref{homogeneity} is used in the second inequality, equation \eqref{1.1}, finite $a_0$ and F5\textsuperscript{+} are used in the last two lines. Such a property was observed by Urbas \cite{Urbas1995, Urbas2001} for orthogonally invariant  $\mathcal{F}$. Note that the property \eqref{verification of F6} here holds for non-orthogonally invariant $\mathcal{F}$ as well.

Instead of the elementary symmetric functions $S_k$, we may also consider  functions $P_k$, which are products of $k$ sums of eigenvalues, namely
\begin{equation}\label{operator Pk}
P_k[r]:=P_k(\lambda(r)) = \prod\limits_{i_1<\cdots<i_k}\sum_{s=1}^k \lambda_{i_s}(r), \quad k=1, \cdots, n,
\end{equation}
defined in the cones
\begin{equation}\label{cone Pk}
\mathcal{P}_k = \{r\in \mathbb{S}^n \ |\ \sum_{s=1}^k \lambda_{i_s}(r)>0 \},
\end{equation}
where $i_1,\cdots, i_k \subset \{1,\cdots,n\}$, $\lambda(r)=(\lambda_1(r),\cdots,\lambda_n(r))$ denote the eigenvalues of the matrix $r\in \mathcal{P}_k$.  In differential geometry, there is a large amount of literature dealing with $k$-convex hypersurfaces, where the notion $k$-convexity of a hypersurface, originating from \cite{Sha1986, Sha1987}, is that the sum of any $k$-principal curvatures at each point is positive.
Clearly the associated operators  in \eqref{operator Pk}  interpolate between the Laplacian, $k=n$, and the Monge-Amp\`ere operator, $k=1$. We then obtain another group of examples satisfying the hypotheses of Theorem \ref{Th1.2}, namely the normalised functions 
\begin{equation}
\tilde F_k :=(P_k)^{\frac{1}{C^k_n}}, \quad C^k_n=\frac{n!}{k!(n-k)!}, \quad 1\le k\le n, 
\end{equation}
which are homogeneous of degree one and satisfy F1-F5 in $\mathcal{P}_k$ with $a_0=0$.  Note that  the associated operators also interpolate between the Laplacian $\tilde {\mathcal F}_n = \mathcal {F}_1$ and  the normalised Monge-Amp\`ere operator $\tilde {\mathcal  F}_1 = \mathcal {F}_n $ and that the concavity F2 follows from the arithmetic-geometric mean inequality, similarly to the Monge-Amp\`ere case, $k=1$, (or as a consequence by virtue of the general property that concave functions of linear functions are also concave). For $1\le k\le n-1$, the functions $\tilde F_k$ also satisfy F5\textsuperscript{+} in $\mathcal{P}_k$ and F6 in the positive cone $K^+$. Using the property that at most $k-1$ of $\lambda_1(r),\cdots,\lambda_n(r)$ can be negative, we also see that  $\tilde F_k$ satisfies F7 in $\mathcal P_k$. Furthermore from Theorems \ref{Th1.2} and \ref{Th1.3}, it follows that we can substitute $\tilde F_k$ for $F_k$ and $\mathcal P_k$ for $\Gamma_k$ in cases (i) and (ii)(a) of Theorem \ref{Th1.4}. In the next subsection we will also introduce degenerate versions of these operators.

We also have further examples originating from geometric applications, given by functions,
\begin{equation}\label{F - alpha}
F_{k,-\alpha}[r]:= F_{k,-\alpha}(\lambda(r)) = \{\sum\limits_{i_1<\cdots<i_k}[\sum_{s=1}^k \lambda_{i_s}(r)]^{-\alpha}\}^{-\frac{1}{\alpha}}, \quad \alpha> 0,
\end{equation}
also defined in the  cone $\mathcal{P}_k$ for $k=1,\cdots,n$. When $\alpha = k = 1$, $F_{k,-\alpha}$ coincides with the Hessian quotient $F_{n,n-1}$ and if $\kappa = (\kappa_1, \cdots, \kappa_n)$ denotes the principal curvatures of a hypersurface in $\mathbb{R}^{n+1}$, then $F_ {1,- 1}[\kappa]$ is its  harmonic curvature while $F_{1, - 2}[\kappa]$ is the inverse of the length of the second fundamental form; see \cite{G}. The associated operators are homogeneous and satisfy F1-F5 and F7 in $\mathcal{P}_k$ with $a_0=0$ and either finite or infinite $b$ in F5.

The operators in the above examples are all orthogonally invariant. We also have examples of operators
$\mathcal{F}$ which are not orthogonally invariant. For instance, let us consider a set $V=\{Q_1,\cdots, Q_m\}$, where $Q_i$, $i=1,\cdots, m$ are nonsingular matrices and $m>1$ is a finite integer. We can define an operator of the form
\begin{equation}\label{non-orthogonal example}
F_{k,V}[r]=\min_{Q\in V} F_k (Q r Q^{-1}), \quad {\rm for}\ k=1, \cdots, n,
\end{equation}
in the cone $\Gamma_{k,V}= \{r\in \mathbb{S}^n \ |\ F_j(Q r Q^{-1})>0, \ \forall Q\in V, \ j=1,\cdots,k\}$. Then the operator in \eqref{non-orthogonal example} provides an example, which is non-orthogonally invariant, but still satisfies our assumptions F1-F5\textsuperscript{+} and F7. Note that since $F_{k,V}$ is a concave function in $r$, it has first and second order derivatives almost everywhere in $\Gamma_{k,V}$ so that the differential inequalities in  \eqref{F1 inequality}, \eqref{F2 inequality}, as well as condition F5\textsuperscript{+}, hold in this sense. We can also consider the case of infinite $V$ and replace $F_k$ by other functions.  The resulting Bellman type augmented Hessian operators can then be treated by smooth approximation as in the $k= 1$ case, (see for example \cite {GTbook}); and we would obtain the existence of $C^{2,\alpha} (\bar\Omega)$ solutions, for some $\alpha >0$ in Theorems \ref{Th1.4} and  \ref{Th4.1}. More generally if we drop the smoothness condition $F \in C^2 (\Gamma)$, then we still obtain existence of $C^{2,\alpha} (\bar\Omega)$ solutions, for some $\alpha >0$, in Theorems \ref{Th1.4}, \ref{Th4.1} and Remark \ref{Remark4.1}. Here we need the more general $C^{2,\alpha} (\bar\Omega)$ estimate for concave fully nonlinear uniformly elliptic equations from \cite {Tru1984}.

\vskip8pt

{\it Examples for $A$.} Examples of strictly regular matrix functions arising in optimal transportation and geometric optics can be found for example in \cite{MTW2005, TruWang2009, Tru2014, JT2014, LT2016}.   Typically there is not a natural association with oblique boundary operators, except for those coming from the second boundary value problem to prescribe the images of the associated mappings, so that second derivative estimates may depend on gradient restrictions in accordance with Remark \ref{Remark1.2}. Moreover the relevant equations typically  involve constraints so that we are also in the situation of  Remark \ref{Remark 1.3}. Both these situations will be further examined in ensuing work.
However we will give some examples satisfying our hypotheses,  where oblique boundary operators arise naturally through our domain convexity conditions.

Our first examples extend those coming from the conformal deformations of manifolds with boundary; (as for example in \cite{SSChen2007, JLL2007}). We introduce a class of matrix functions of the form
\begin{equation}\label{main example}
A(x,z,p) = \frac{1}{2} a_{kl}(x,z)p_kp_l I - a_0(x,z)p\otimes p,
\end{equation}
where $a_{kl}, a_0 \in C^2(\bar\Omega\times \mathbb{R})$ and  the matrix $\{a_{kl}\}>0$ in $\bar\Omega\times \mathbb{R}$.
Clearly for any vectors $\xi, \eta \in \mathbb{R}^n$, we have
\begin{equation}\label{verification of strict regular}
\begin{array}{rl}
\displaystyle A_{ij}^{kl}\xi_i\xi_j\eta_k\eta_l \!\!&\!\! = \displaystyle (a_{st}\delta_{sk}\delta_{tl}\delta_{ij}- 2a_0\delta_{ik}\delta_{jl})\xi_i\xi_j\eta_k\eta_l \\
\!\!&\!\!\displaystyle  =  |\xi|^2 a_{kl} \eta_k\eta_l - 2 a_0 (\xi\cdot \eta)^2\\
\!\!&\!\!\displaystyle  \ge   \lambda_1 |\xi|^2 |\eta|^2 - 2 a_0 (\xi\cdot \eta)^2,
\end{array}
\end{equation}
where $\lambda_1 > 0$ denotes the minimum eigenvalue of $\{a_{kl}\}$, so that $A$ is strictly regular in
$\bar\Omega$. Moreover $A$ is uniformly regular, with \eqref{1.19} satisfied with  $\lambda_0 = \inf \lambda_1$,
$\bar\lambda_0 = 2\sup a^+_0$, where the infimum and supremum are taken over $\Omega\times (-M,M)$.
For $A$ given by \eqref{main example}, the corresponding $A$-curvature matrix on $\partial\Omega$ for \eqref{1.6} is given by
\begin{equation}\label{A curvature main example}
K_A[\partial\Omega](x,z,p) = -\delta\nu(x) + a_{kl}(x,z)p_k\nu_l(x)(I-\nu(x)\otimes\nu(x)),
\end{equation}
where $\nu$ is the unit inner normal to $\partial\Omega$ and $\delta$ denotes the tangential gradient. Consequently the quasilinear boundary operator $\mathcal G$, given by \eqref{quasilinear} with $\beta_k = a_{kl}\nu_l$, will be oblique, satisfying
$\beta\cdot\nu \ge \lambda_0$ on $\partial \Omega$ and $\Omega$ is uniformly $(\Gamma, A, G)$-convex with respect to $u$ if and only if
\begin{equation}\label{uniform convexity in conformal geometry}
-\delta \nu + \varphi(\cdot,u)(I-\nu\otimes\nu) + \mu_0\nu(x)\otimes\nu(x) \in \Gamma,
\end{equation}
for some constant $\mu_0>0$, possibly depending on $u$. Accordingly our uniform convexity condition is independent of the gradient variables. In the orthogonally invariant case, letting
$$\tilde \Gamma = \lambda(\Gamma) = \{ \lambda\in \mathbb{R}^n\ | \ \lambda_1, \cdots, \lambda_n \ {\rm are \ eigenvalues\ of\ some }\ r\in \Gamma\}$$
 denote the corresponding cone to $\Gamma$ in $\mathbb{R}^n$, \eqref{uniform convexity in conformal geometry} is equivalent to $(\tilde\kappa, \mu_0) \in \tilde \Gamma$, where $\tilde\kappa_i = \kappa_i + \varphi$, $i= 1,\cdots, n-1$, and $\kappa = (\kappa_1,\cdots, \kappa_{n-1})$ denotes the principal curvatures of $\partial\Omega$. In particular for the cones $\Gamma_k$, \eqref{uniform convexity in conformal geometry} is equivalent to $\tilde\kappa\in \tilde \Gamma_{k-1}$, that is $S_j(\tilde\kappa) > 0$ for $1\le j\le k-1$.

\vskip8pt

{\it Conformal geometry.}
The application to conformal geometry concerns the special case $a_{ij} = \delta_{ij}$, $a_0 = 1$ in \eqref{main example}, that is
\begin{equation}\label{Euclidean Yamabe}
A(p)=\frac{1}{2}|p|^2 I-p\otimes p,
\end{equation}
with the associated semilinear Neumann condition,
\begin{equation}
D_\nu u =\varphi (x,u),  \quad \mbox{on} \ \partial\Omega,
\end{equation}
and is related to the fully nonlinear Yamabe problem with boundary, where $A_{\tilde g} = e^{2u}M[u]$ is  the Schouten tensor of the conformal deformation $\tilde g =e^{-2u} g_0$ and $g_0$ denotes the standard metric on
$\mathbb{R}^n$. If $F$ is positive homogenous of degree one, satisfying F3 with $a_0 = 0$, $\tilde \varphi$ is a positive function on $\Omega$ and
$\tilde h$ a function on $\partial\Omega$, then the problem of finding a conformal metric $\tilde g$ on $\Omega$ such that $F(A_{\tilde g}) = \tilde \varphi$, with mean curvature $\tilde h$  on $\partial\Omega$, is equivalent to solving the semilinear Neumann problem,
\begin{equation}\label{transformed Yamabe Euc}
\begin{array}{cl}
\displaystyle \mathcal F[u] := F(M[u])= \tilde \varphi e^{-2u}, &   {\rm in} \ \Omega,\\
\displaystyle D_\nu u =\tilde he^{-u} - h_0,                   &  {\rm on}\ \partial \Omega,
\end{array}
\end{equation}
where $h_0$ denotes the mean curvature of $\partial \Omega$ with respect to $g_0$, \cite{JLL2007}.  With $\Omega$, $\tilde \varphi$ and $\tilde h$ sufficiently smooth, \eqref {transformed Yamabe Euc} satisfies the hypotheses of the second derivative estimate, Theorem \ref{Th1.2}, if $F$ also satisfies F1 and F2 and $\Omega$ satisfies \eqref{uniform convexity in conformal geometry} with $\varphi = \tilde h e^{-u} - h_0$. Note that our restriction $r \le$ trace$(r)I$ on $\Gamma$ implies that $\tilde h > 0$. However \eqref{uniform convexity in conformal geometry} does provide some relaxation of the umbilic condition in \cite{JLL2007} and related papers, possibly depending on solution upper bounds, and can be extended to more general Riemannian manifolds with boundary, (taking account of Remarks \ref{Remark 2.1} and \ref{Remark3.6}), as well as to more general boundary curvatures. In particular for the cones, $\Gamma_2$ and $\mathcal P_{n-1}$, the convexity condition \eqref{uniform convexity in conformal geometry} is equivalent to $\tilde h > 0$, since $\delta\cdot\nu = (1-n)h_0$ so no geometric conditions are needed. Note that for the  local gradient bound, Theorem \ref{Th3.1}, we only need $F$ to  satisfy F1 to fulfil the hypotheses of case (i) (and no geometric restrictions on $\Omega$).  Since the functions $B$ and $\varphi$ are not monotone increasing in $z$ we would need though {\it a priori} solution bounds to get existence and this is still an unresolved issue in the non-umbilic case.

\vskip8pt

{\it Optimal transportation and geometric optics.}
In optimal transportation problems, the matrix $A$ is generated by a cost function $c \in C^2(\mathcal D)$, where $\mathcal D$ is a domain in $\mathbb{R}^n\times\mathbb{R}^n$, through the relation
\begin{equation}
A(x,p)= c_{xx}(x,Y(x,p)),
\end{equation}
where the mapping $Y \in C^1(\mathcal U)$, for some domain $\mathcal U \in \mathbb{R}^n\times\mathbb{R}^n$, is given as the unique solution of
\begin{equation}\label{solve Y}
c_x(x,Y)=p.
\end{equation}
Here we assume conditions A1, A2 as in \cite{MTW2005, TruWang2009} to guarantee the unique solvability of $Y$ from \eqref{solve Y}. The strict regularity was introduced as condition A3 in \cite{MTW2005}; (see also \cite{Tru2006}). More generally the matrices $A$ arise from prescribed Jacobian equations \cite{Tru2008} where now the mapping $Y \in C^1(\mathcal U)$ is given for a domain $\mathcal U \in \mathbb{R}^n\times\mathbb{R}\times\mathbb{R}^n$ satisfying det$Y_p \ne 0$ in $\mathcal U$ and the matrix $A$ is given by
\begin{equation}\label{PJE}
A(x,z,p) = Y^{-1}_p(Y_x + Y_z \otimes p).
\end{equation}
Mappings $Y$ in geometric optics can also be unified through a notion of generating function \cite {Tru2014}, which extends that of a cost function to permit the $z$ dependence in $Y$  and provides symmetric matrices in \eqref{PJE}. For further information and particular examples of strictly regular matrices $A$ the reader is referred to \cite{MTW2005, TruWang2009, Tru2014, JT2014, LT2016} and the references therein. As mentioned above in most of these examples there are not natural relationships with semilinear oblique boundary operators so that the situation in Remarks \ref{Remark1.2} and
\ref{Remark 1.3} is applicable. The natural  boundary condition is the prescription of the image $\Omega^*$ of the mapping $T:=Y(\cdot,u,Du)$ on $\Omega$, which implies a boundary condition which is oblique with respect to admissible functions, \cite{TruWang2009, Tru2008}. Once the obliqueness is estimated we are in the situation of Theorem \ref{Th1.2} and moreover our domain convexity conditions there originate from those used in the optimal transportation and more generally; (see \cite{TruWang2009, Tru2008, LT2016}).

Accordingly  we just mention here some examples which fit simply with our hypotheses. First the logarithm cost function, given by $c(x,y) = \frac{1}{2}\log |x-y|$ for $x\ne y$, also generates our example \eqref{Euclidean Yamabe}, \cite{TruWang2009}.
From geometric optics we have the example coming from the reflection of a parallel beam to a flat target,
\cite{Tru2014, LT2016},
\begin{equation}\label{ reflection}
A(x,z,p) = \frac{1}{2z}( |p|^2 - 1) I
\end{equation}
for $z > 0$. Here there is a constraint, namely $u>0$, which is readily handled by taking a logarithm or assuming the subsolution $\underline u > 0$ in $\bar\Omega$.  Then for a semilinear Neumann boundary condition of the form
\begin{equation}
D_\nu u = u\varphi (\cdot, u),
\end{equation}
we obtain again that $\Omega$ is uniformly $(\Gamma, A, G)$-convex with respect to $u$ if and only if
\eqref{uniform convexity in conformal geometry} holds.
\vskip8pt

{\it Admissible functions.}
Quadratic functions of the form $u_0 = c_0 + \frac{1}{2}\epsilon |x-x_0|^2$, will be admissible for the matrices
\eqref{main example} and for arbitrary constants $c_0$, points $x_0 \in \Omega$ and sufficiently small $\epsilon$.
In general for matrices $A$ arising in optimal transportation and geometric optics  the existence of admissible functions is proved in \cite{JT2014}.

\vskip8pt

{\it Nonlinear boundary operators.}
The capillarity type operators, given by
\begin{equation}
G(x,z,p) = p\cdot\nu - \theta(x)\sqrt{1+|p|^2} -\varphi(x,z),
\end{equation}
would satisfy our hypotheses for $0<\theta<1$ on $\partial\Omega$. Furthermore for $A$ in the form \eqref{main example} with $\{a_{ij}\} = I$, condition \eqref{uniform convexity in conformal geometry} would at least imply that that $\Omega$ is uniformly $(\Gamma, A, G)$-convex with respect to $u$. Note that here and quite generally we cannot have $\varphi (\cdot, u) \ge 0$ everywhere on $\partial\Omega$ for an admissible function so the basic capillarity condition is ruled out by our concavity condition which requires $\theta > 0$.

%%%%%%%%%%%%%%%%%%%%%%%%%%%%%%%%%%%%%%%%%%%%%%%%%%%%%%%%%%%%%%%%%%%%%%%%%%%%%%%%%%%%%%%%%%%%%%%%%%%%%%%%%%%%%%%%%
\subsection{Degenerate equations}\label{subsection4.3}
In this subsection, we consider the extension of our results to degenerate elliptic equations and in particular apply the classical existence results, Theorems \ref{Th4.1} and \ref{Th4.2}, to yield the existence of $C^{1,1}$ admissible solutions for the oblique boundary value problems.  We shall use the following assumption, in place of F1, to describe the degenerate ellipticity:
\begin{itemize}
\item[{\bf F1$^-$}:]
$F$ is non-decreasing in $\Gamma$, namely
\begin{equation}\label{F1- inequality}
F_r := F_{r_{ij}} = \left \{ \frac{\partial F}{\partial r_{ij}} \right \} \ge 0, \ {\rm in} \ \Gamma,
\end{equation}
and $\mathscr{T}(r): ={\rm {trace}}(F_r) > 0 $ in $\Gamma$.
\end{itemize}

Then using an elliptic regularisation as in \cite{Tru1990}, we define for  a constant $\epsilon \ge 0$, $F_1(r) = {\rm trace}(r)$, approximating operators and cones,
\begin {equation}\label{regularisation}
F^\epsilon (r) = F(r + \epsilon F_1(r)I), \quad \Gamma^\epsilon = \{r + \epsilon F_1(r)I\in\mathbb{S}^n \ | \ r \in \Gamma\}.
\end{equation}
Clearly $F^\epsilon$ satisfies the ellipticity condition F1 in the cone $\Gamma^\epsilon$, for $\epsilon > 0$ and is also uniformly elliptic there with
\begin{equation}
  \epsilon \mathscr{T}(r)I \le F^\epsilon_r \le (1+\epsilon) \mathscr{T}(r)I.
 \end{equation}
Moreover if $F$ also satisfies any of conditions F2 to F7, then $F^\epsilon$ satisfies the same condition in $\Gamma^\epsilon$ with relevant constants independent of $\epsilon$, as $\epsilon$ tends to $0$. Consequently we may replace $F$ by $F^ \epsilon$  and the operator $\mathcal F$ by  $\mathcal F^\epsilon$,  for sufficiently small $\epsilon \ge 0$ in our Hessian and gradient estimates in Sections \ref{Section 2} and \ref{Section 3}. To get the lower second derivative bounds in Theorem  \ref{Th1.1} we can simply use $\mathscr{T}^\epsilon(r): =  {\rm {trace}}(F^\epsilon_r)  > 0 $
 in $\Gamma_\epsilon$, while for the lower tangential bounds in Theorem \ref{Th1.2} we now have, from our restriction  on $\Gamma$ in the quasilinear case,
$$  M[u] \le (1+n\epsilon) \mathcal F_1[u]I $$
so that we arrive again at an estimate of the form \eqref{full boundary}. Note that we only need sufficiently small
$\epsilon$ for the quasilinear case of Theorem \ref{Th1.2}. Clearly we could have assumed the weaker condition F1$^-$ at the outset for our derivative estimates in Sections \ref{Section 2} and \ref{Section 3} but it is not feasible then to consider solutions with smooth second derivatives.  By approximation we now obtain from Theorem \ref{Th4.1} and \ref{Th4.2} the following extension to $C^{1,1}(\bar\Omega)$ solvability of degenerate equations. Here a function $u\in C^{1,1}(\Omega)$ is admissible
if $M[u] \in \Gamma$ almost everywhere in $\Omega$ and is a solution of equation \eqref{1.1} if it is a solution almost everywhere in $\Omega$.

\begin{Corollary}\label{Cor4.1}
In the hypotheses of Theorems \ref{Th4.1} and \ref{Th4.2} assume that condition F1 is weakened to condition F1$^-$, with $\bar u$,
$\underline u \in C^{1,1}(\bar\Omega)$ and the supersolution condition strengthened  so that
$\bar u$ is a supersolution of \eqref{1.1}-\eqref{1.2} with $B$ replaced by $B -\delta$ for some positive constant
$\delta$. Then there exists an admissible solution $u \in C^{1,1}(\bar \Omega)$ of the boundary value problem, \eqref{1.1}-\eqref{1.2}.
%, which is unique if either $A$ or $B$ is strictly increasing.
\end{Corollary}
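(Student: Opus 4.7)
The plan is to employ elliptic regularization: replace $\mathcal F$ by the uniformly elliptic approximation $\mathcal F^\epsilon$ defined in \eqref{regularisation}, solve the resulting boundary value problem by appealing to Theorem \ref{Th4.1} or \ref{Th4.2}, and pass to the limit $\epsilon \to 0$ using uniform \textit{a priori} $C^{1,1}$ bounds. As noted immediately after \eqref{regularisation}, $\mathcal F^\epsilon$ satisfies F1 and inherits any of F2--F7 assumed on $F$ with constants independent of $\epsilon$ as $\epsilon \to 0$; in particular the derivative estimates of Sections \ref{Section 2} and \ref{Section 3} hold for admissible solutions of the regularized problem with bounds uniform in $\epsilon$.

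First I would produce a smooth sub- and supersolution pair for the regularized problem. Since $\Gamma$ is convex and contains $K^+$, while ${\rm trace}(r)>0$ on $\Gamma\setminus\{0\}$ by the half-space condition, it follows that $\Gamma\subset \Gamma^\epsilon$, and the given admissible $C^{1,1}$ subsolution $\underline u$ satisfies $M[\underline u]\in\Gamma\subset\Gamma^\epsilon$ almost everywhere, so F1$^-$ together with monotonicity of $F$ yield $\mathcal F^\epsilon[\underline u]\ge \mathcal F[\underline u]\ge B$ a.e. For the supersolution $\bar u$, the strengthened inequality $\mathcal F[\bar u]\le B-\delta$ combined with the uniform $L^\infty$ bound on $M[\bar u]$ and the continuity of $F$ gives $\mathcal F^\epsilon[\bar u]\le B-\delta/2$ a.e.\ for all sufficiently small $\epsilon$. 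To upgrade these to genuine $C^2$ sub- and supersolutions as required by Theorems \ref{Th4.1} and \ref{Th4.2}, I would mollify in the interior, setting $\underline u_\eta=\underline u\ast\rho_\eta$ and $\bar u_\eta=\bar u\ast\rho_\eta$, noting that $D^2\underline u_\eta=(D^2\underline u)\ast\rho_\eta\in\Gamma$ by convexity of $\Gamma$; the resulting $O(\eta^\alpha)$ errors in the values of $A$, $B$ and in the boundary operator can be absorbed into the $\delta/2$ buffer for $\eta$ small compared with $\epsilon$, with small quadratic and linear perturbations added if necessary to restore the boundary inequalities.

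With a smooth sub- and supersolution pair in hand, Theorem \ref{Th4.1} or \ref{Th4.2} (as appropriate to the case) furnishes an admissible $u^\epsilon\in C^{3,\alpha}(\bar\Omega)$ solving the regularized problem. The uniform derivative estimates of Sections \ref{Section 2} and \ref{Section 3} then give $\|u^\epsilon\|_{C^{1,1}(\bar\Omega)}\le C$ with $C$ independent of $\epsilon$. By Arzel\`a--Ascoli applied to $u^\epsilon$ and $Du^\epsilon$, together with weak-$\ast$ compactness in $W^{2,\infty}(\Omega)$, a subsequence $u^{\epsilon_k}$ converges in $C^{1,\alpha}(\bar\Omega)$ for every $\alpha<1$ and weak-$\ast$ in $W^{2,\infty}(\Omega)$ to a limit $u\in C^{1,1}(\bar\Omega)$. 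Since $M[u^\epsilon]\in\Gamma^\epsilon$ pointwise and $\Gamma^\epsilon$ shrinks to $\bar\Gamma$, we obtain $M[u]\in\bar\Gamma$ a.e., and the equation \eqref{1.1} passes to the limit pointwise a.e.\ in $\Omega$ while the oblique boundary condition \eqref{1.2} passes to the limit by uniform $C^1$ convergence up to the boundary. Uniqueness (up to additive constants in the degenerate case) follows from the comparison principle as in Remark \ref{Remark4.1}.

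The main obstacle will be the first step: converting the $C^{1,1}$ sub- and supersolutions into bona fide $C^2$ sub- and supersolutions of the $\epsilon$-regularized problem while preserving the pointwise inequalities required to apply Theorems \ref{Th4.1} and \ref{Th4.2}. This is precisely the role of the strengthened hypothesis $B\mapsto B-\delta$ imposed in the corollary: the positive margin $\delta$ provides exactly the buffer needed to absorb both the $\mathcal F^\epsilon-\mathcal F$ error in the supersolution inequality and the $o(1)$ mollification errors in $A$, $B$ and in the boundary operator, thereby reducing the degenerate problem to the non-degenerate existence theory already developed.
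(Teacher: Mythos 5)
Your overall strategy is the same as the paper's: regularize $\mathcal F$ to $\mathcal F^\epsilon$, verify that $\underline u$ and $\bar u$ remain admissible sub- and supersolution for the regularized problem for small $\epsilon$ (using the $\delta$ buffer on the supersolution side), solve via Theorem \ref{Th4.1} or \ref{Th4.2}, and pass to the limit using uniform $C^2$ bounds. However, the final step contains a genuine gap. You assert that ``the equation \eqref{1.1} passes to the limit pointwise a.e.\ in $\Omega$'' from $C^{1,\alpha}$ convergence together with weak-$\ast$ convergence in $W^{2,\infty}$. This is not justified: weak-$\ast$ convergence of $D^2u^{\epsilon_k}$ does not give a.e.\ convergence of second derivatives, and because $F$ is nonlinear one cannot pass $F(M[u^{\epsilon_k}])=B$ to a weak limit. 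Concavity of $F$ would at best give the one-sided inequality $F(M[u])\ge B$ a.e.\ from upper semicontinuity under weak-$\ast$ convergence. The paper closes this gap by invoking the stability theorem for viscosity solutions (\cite{CIL1992}): $u^{\epsilon_k}\to u$ locally uniformly and the $u^{\epsilon_k}$ are classical (hence viscosity) solutions, so $u$ is a viscosity solution; since $u\in C^{1,1}$ it is then a solution a.e.\ and an admissible one.

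Two smaller points. First, your verification of the supersolution inequality for $\mathcal F^\epsilon[\bar u]$ glosses over the fact that $\mathcal F[\bar u]\le B-\delta$ is only assumed where $\bar u$ is admissible for $\Gamma$, whereas $\mathcal F^\epsilon[\bar u]$ must be controlled on the larger set where $\bar u$ is $\Gamma^\epsilon$-admissible. The paper handles this with the sets $\Omega'_\epsilon$ and $K_\epsilon$: off $K_\epsilon$ one has $\mathcal F^\epsilon[\bar u]<a<B$ trivially, and on $K_\epsilon\subset\Omega'_0$ (for small $\epsilon$) the concavity inequality $\mathcal F^\epsilon[\bar u]\le\mathcal F[\bar u]+\epsilon\,\mathcal F_1[\bar u]\,\mathscr T(M[\bar u])$ is absorbed by $\delta$. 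Second, you are right to worry that Theorems \ref{Th4.1}/\ref{Th4.2} are stated for $C^2$ sub- and supersolutions while the corollary only gives $C^{1,1}$ (the paper's own proof is silent here); but your mollification fix has an issue: the corollary provides a $\delta$ margin only for the supersolution, while mollifying $\underline u$ will also introduce $o(1)$ errors in the subsolution inequality $\mathcal F[\underline u]\ge B$ that you have no buffer to absorb. Since the sub-/supersolutions in Theorem \ref{Th4.1} enter only through the comparison principle (for $L^\infty$ bounds and the interval $\mathcal I$) and as the initial point of the continuity method, one can instead argue that $C^{1,1}$ regularity is already sufficient for those purposes, as the paper implicitly does.
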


\begin{proof}
We claim that $\bar u$ and $\underline u$ are respectively supersolution and admissible subsolution of the boundary value problem \eqref{1.1}-\eqref{1.2} for $\mathcal F$ replaced by $\mathcal F^\epsilon$ for sufficiently small $\epsilon$, depending on $\bar u$ and $\delta$. To prove this we first define the sets
$$\Omega^\prime_\epsilon = \{ x\in \bar\Omega \ |\  M[\bar u](x) + \epsilon F_1(M[\bar u](x))I\in \Gamma_\epsilon\}, \quad K_\epsilon = \{x\in\bar \Omega^\prime_\epsilon\ | \ \mathcal F^\epsilon[\bar u]\ge a\}, $$
where $a$ is constant satisfying $a_0 < a < B(\cdot,\bar u, D\bar u)$ in $\Omega$. Then $K_\epsilon$ is a decreasing family of compact subsets of $\bar\Omega$ approaching $K_0$ as $\epsilon$ approaches zero. Consequently $K_\epsilon\subset \Omega^\prime_0$ for sufficiently small $\epsilon$. By the concavity F2, we then have, in $K_\epsilon$,
\begin{equation}
\mathcal F^\epsilon[\bar u] \le \mathcal F[\bar u] +\epsilon \mathcal F_1[\bar u] \mathscr{T}(M[\bar u]) \le B(\cdot,\bar u, D\bar u),
\end{equation}
for sufficiently small $\epsilon$ depending on  $\bar u$ and $\delta$. Clearly $\mathcal F^\epsilon[\bar u] \le B(\cdot,\bar u, D\bar u)$ in $ \Omega^\prime_\epsilon - K_\epsilon $ so that $\bar u$ is a supersolution of the equation,
$\mathcal F^\epsilon = B$, for sufficiently small $\epsilon$. Next it follows immediately from the degenerate ellipticity F1$^-$, that $\underline u$ is an admissible subsolution, for any $\epsilon \ge 0$ so that our claim is proved.

From Theorem \ref{Th4.1}, (or Theorem \ref{Th4.2}), and Theorems \ref{Th1.1}, \ref {Th1.2} and \ref{Th1.3}, there exists a unique solution $u_\epsilon\in C^{3,\alpha}(\bar \Omega)\cap C^4(\Omega)$ of the problem \eqref{1.1}-\eqref{1.2} with $\mathcal F = \mathcal F^\epsilon$ for sufficiently small positive $\epsilon$, together with the {\it a priori} estimates
\begin{equation}\label{uniform estimates}
|u_\epsilon|_{2;\Omega} \le C
\end{equation}
with constant $C$  independent of $\epsilon$. Hence there exists a subsequence $u_{\epsilon_k}$ and a function $u\in C^{1,1}(\bar \Omega)$ such that
\begin{equation}
u_{\epsilon_k}\rightarrow u \quad {\rm in}\ C^{1,\alpha}(\bar \Omega),\quad \forall \alpha\in (0,1),\quad {\rm as} \ \epsilon_k\rightarrow 0.
\end{equation}
From the stability property of the theory of viscosity solutions  \cite{CIL1992}, it is readily seen that $u\in C^{1,1}(\bar \Omega)$ is an admissible solution of the problem \eqref{1.1}-\eqref{1.2}.
%The uniqueness of $u$ is not so straightforward because of the degeneracy. It is clear in the cases where $A$ or $B$ are strictly increasing but otherwise we need to invoke our general barrier construction in Lemma 2.1 of \cite{JT-oblique-II}.
\end{proof}

To illustrate the application of Corollary \ref{Cor4.1}, we consider the degenerate elliptic operators $\mathfrak{m}_k$,
given  by functions
\begin{equation}\label{m operator}
 \mathfrak{m}_k(r)=\min \{\sum_{s=1}^k \lambda_{i_s}(r)\},
\end{equation}
for $k=1,\cdots, n$, $i_1,\cdots, i_k \subset \{1,\cdots,n\}$, in the cones $\mathcal P_k$ introduced in \eqref{cone Pk}. As for the examples \eqref{non-orthogonal example}, the functions $\mathfrak{m}_k$ for $k <n$ are not $C^2$ but will still satisfy conditions F1$^-$, F2, F3, F4, F5 and F7, with $a_0 = 0$, in $\mathcal P_k$ almost everywhere. As well $\mathfrak{m}_k$ is positive homogeneous of degree one. The operators
$\mathfrak{m}_k$ are also related to our examples \eqref{F - alpha} since $ \mathfrak{m}_k = F_{k,\infty} = \lim\limits_{\alpha \rightarrow \infty} F_{k,-\alpha} $. More explicitly the functions $F_{k,-\alpha}$ are monotone decreasing in $\alpha$
and satisfy the inequalities
$$  \mathfrak{m}_k < F_{k,-\alpha} < (C^k_n)^{-\frac{1}{\alpha}} \mathfrak{m}_k$$
in $\mathcal P_k$. We also note that when $k =n$, $\mathfrak{m}_n$ is the Poisson operator  $\mathcal F_1$.

By suitable approximation of the ``minimum'' function we then obtain from Corollary \ref{Cor4.1} the following analogue of Theorem \ref{Th1.4}.

\begin{Corollary}\label{Cor4.2}
Let $\mathcal F = \mathfrak{m}_k$, for some $k =1, \cdots, n-1$, $\Omega$  a $C^{3,1}$ bounded domain in $\mathbb{R}^n$, $A\in C^2(\bar \Omega\times \mathbb{R}\times \mathbb{R}^n)$ strictly regular in $\bar\Omega$, $B > 0, \in C^2(\bar \Omega\times \mathbb{R}\times \mathbb{R}^n)$,
$\mathcal G$ semilinear and oblique with $G\in C^{2,1}(\partial\Omega\times \mathbb{R}\times\mathbb{R}^n)$ satisfying \eqref{semilinear}. Assume that $\underline u$ and $\bar  u$, $\in C^{1,1}(\Omega)\cap C^1(\bar \Omega)$ are respectively an admissible subsolution of \eqref{1.1}-\eqref{1.2} and supersolution of \eqref{1.1}-\eqref{1.2} with
$B$ replaced by $B -\delta$, for some positive constant $\delta$, and $\Omega$ is uniformly $(\mathcal{P}_k,A,G)$-convex with respect to the interval $\mathcal I = [\underline u, \bar u]$. Assume also that $A$, $B$ and $\varphi$ are non-decreasing in $z$, with at least one of them strictly increasing,  $A$ satisfies the quadratic growth conditions \eqref{quadratic growth} and $B$ is independent of $p$. Then if one of the following further conditions is satisfied:
\begin{itemize}
\item[(i):]  $A$ is uniformly regular;
\item[(ii):]   $\beta=\nu$ and $A = o(|p|^2)$  in \eqref{quadratic growth};
\item[(iii):] $k=1$ and $A\ge O (|p|^2)I$ in place of \eqref{quadratic growth},
\end{itemize}
there exists an admissible solution $u \in C^{1,1}(\bar \Omega)$ of the boundary value problem \eqref{1.1}-\eqref{1.2}.
\end{Corollary}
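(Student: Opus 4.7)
The proof of Corollary \ref{Cor4.2} combines the elliptic regularization technique of Corollary \ref{Cor4.1} with a preliminary smoothing of the non-$C^2$ operator $\mathfrak{m}_k$. My plan is to approximate $\mathfrak{m}_k$ by the smooth operators $F^\alpha := F_{k,-\alpha}$ from \eqref{F - alpha}. These operators are $C^\infty$ in $\mathcal{P}_k$, orthogonally invariant, positive homogeneous of degree one, and satisfy F1, F2, F3, F5 (and F7 when needed) in $\mathcal{P}_k$ with constants uniform in $\alpha$; from the elementary two-sided bound $(C^k_n)^{-1/\alpha}\,\mathfrak{m}_k[r] \le F^\alpha[r] \le \mathfrak{m}_k[r]$ on $\mathcal{P}_k$, they converge to $\mathfrak{m}_k$ monotonically from below, uniformly on compact subsets, as $\alpha \to \infty$. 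Each $F^\alpha$ therefore fits the hypotheses of Theorem \ref{Th4.1} in the case corresponding to the one of Corollary \ref{Cor4.2} under consideration, with case (iii) (where $\mathcal{P}_1 = K^+$) falling under case (ii) of Theorem \ref{Th4.1} via the H\"older estimate of Section \ref{subsection3.3} and Lemma 3.2 of \cite{JTX2015}.

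Since $F^\alpha \le \mathfrak{m}_k$, the strengthened supersolution condition immediately gives $F^\alpha[M[\bar u]] \le \mathfrak{m}_k[M[\bar u]] \le B - \delta$ wherever $\bar u$ is admissible. On the subsolution side, the inequality does not pass directly, so I would shift the right-hand side by setting $B^\alpha := B - \omega_\alpha$ with $\omega_\alpha > 0$ chosen so that $|\mathfrak{m}_k[M[\underline u]] - F^\alpha[M[\underline u]]| < \omega_\alpha$ on the admissible set of $\underline u$; this is possible with $\omega_\alpha \to 0$ since $M[\underline u]$ lies in a compact subset of $\mathcal{P}_k$ where the convergence is uniform. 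For $\alpha$ large enough that $\omega_\alpha < \delta$ and $B^\alpha > 0$, the functions $\underline u$ and $\bar u$ become respectively an admissible subsolution and a supersolution of the modified problem $F^\alpha[M[u]] = B^\alpha$ in $\Omega$, $\mathcal{G}[u] = 0$ on $\partial\Omega$, with monotonicity in $z$ preserved. Theorem \ref{Th4.1} then yields a unique admissible solution $u^\alpha \in C^{3,\gamma}(\bar\Omega)$ with $\underline u \le u^\alpha \le \bar u$.

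The {\it a priori} estimates \eqref{1.10} and \eqref{1.21} from Theorems \ref{Th1.2} and \ref{Th1.3} (together with the H\"older-based gradient bound in case (iii)) give $|u^\alpha|_{2;\Omega} \le C$ with $C$ independent of $\alpha$, since all relevant structural constants for $F^\alpha$ (the $\delta_0$ of F5, the constants $\delta_0,\delta_1$ of F7, the normalization $F^\alpha(I)\to k$, etc.) are bounded uniformly in $\alpha$. By Arzel\`a-Ascoli, a subsequence $u^{\alpha_j}$ converges in $C^{1,\gamma}(\bar\Omega)$ for every $\gamma < 1$ to a limit $u \in C^{1,1}(\bar\Omega)$, and the stability of viscosity solutions under locally uniform convergence $F^\alpha \to \mathfrak{m}_k$, $B^\alpha \to B$ ensures that $u$ is a viscosity solution of \eqref{1.1}-\eqref{1.2}. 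Since $u \in C^{1,1}$, it is twice differentiable almost everywhere, with $M[u] \in \mathcal{P}_k$ a.e.\ by weak-$*$ convergence of $D^2u^{\alpha_j}$ and closedness of $\mathcal{P}_k$, so $u$ is an admissible solution. The principal obstacle is reconciling the one-sided approximation $F^\alpha \le \mathfrak{m}_k$ with both the sub- and supersolution inequalities; this is overcome by the shift $B \mapsto B^\alpha$, which exploits precisely the margin $\delta$ provided by the strengthened supersolution hypothesis.
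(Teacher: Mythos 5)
Your proof is correct, and it corresponds to the \emph{alternative} route that the paper itself flags at the end of Section \ref{subsection4.3}: ``we may also prove Corollaries \ref{Cor4.2} and \ref{Cor4.3} directly from Theorem \ref{Th4.1} by approximating $\mathfrak{m}_k$ by $F_{k,-\alpha}$ for large $\alpha$.'' The paper's primary proof instead goes through Corollary \ref{Cor4.1}, which relies on the elliptic regularisation $F^\epsilon(r) = F(r+\epsilon F_1(r)I)$ of \eqref{regularisation} together with a preliminary Bellman-type smoothing of the min-function, handling the degeneracy through F1$^-$. Your route avoids this two-stage scheme because the $F_{k,-\alpha}$ are already $C^\infty$ and strictly increasing (F1, not merely F1$^-$), so Theorem \ref{Th4.1} applies directly at each $\alpha$. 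The price is that the one-sided approximation $F_{k,-\alpha}\le\mathfrak{m}_k$ breaks the subsolution inequality; your shift $B\mapsto B^\alpha = B-\omega_\alpha$ with $\omega_\alpha = (1-(C^k_n)^{-1/\alpha})\sup B \to 0$, absorbed into the margin $\delta$ on the supersolution side, is precisely the right compensating device and plays the same role the strengthened supersolution hypothesis plays inside the proof of Corollary \ref{Cor4.1}.

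Two small points. First, your two-sided bound $(C^k_n)^{-1/\alpha}\mathfrak{m}_k \le F_{k,-\alpha}\le \mathfrak{m}_k$, with $F_{k,-\alpha}\nearrow \mathfrak{m}_k$ as $\alpha\to\infty$, is the correct one; the inequality as printed in the paper after \eqref{F - alpha} has the directions reversed. Second, the paper never explicitly asserts that $F_{k,-\alpha}$ satisfies F7 with constants uniform in $\alpha$, which you need in case (ii). It does, but you should record the short argument: writing $F_{k,-\alpha}$ as a symmetric function $f(\lambda)$ with partial sums $a_J=\sum_{s\in J}\lambda_s$, one has $D_{\lambda_j}f \propto \sum_{J\ni j}a_J^{-\alpha-1}$; pairing $k$-subsets $J\ni j$, $J\not\ni i$ with $J'=(J\setminus\{j\})\cup\{i\}$ shows $D_{\lambda_j}f\ge D_{\lambda_i}f$ whenever $\lambda_j<\lambda_i$, so for any negative eigenvalue direction $D_{\lambda_j}f\ge \mathscr{T}/n$, and F5 (with $F_{k,-\alpha}(I)=(C^k_n)^{-1/\alpha}k\ge k/C^k_n$) supplies the $\delta_0$ term, all independently of $\alpha$. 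With these two remarks supplied, your argument is complete and somewhat cleaner than the paper's primary route.
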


Furthermore the operator $\mathfrak{m}_1$ satisfies condition F6 in $\mathbb{S}^n$ since, from its orthogonal invariance, we have
\begin{equation}
\mathcal{E}_2 = [\mathfrak{m}_1(r)]^2 \le \max\{a^2,b^2\}\le o(|r|)\mathscr{T}, \quad {\rm as}\ |r|\rightarrow \infty,
\end{equation}
for $r\in \mathbb{S}^n$, $a \le \mathfrak{m}_1(r) \le b$ and  using $\mathscr{T}=1$.
Obviously, $\mathfrak{m}_1$ satisfies F6 in the positive cone $K^+$. Then again by  approximation, from Corollary \ref{Cor4.1}, we have the following existence of $C^{1,1}$ admissible solutions for oblique boundary value problem \eqref{1.1}-\eqref{1.2} with $\mathcal{F}=\mathfrak{m}_1$ and nonlinear $\mathcal{G}$.

\begin{Corollary}\label{Cor4.3}
Let $\mathcal F = \mathfrak{m}_1$, $\Omega$  a $C^{3,1}$ bounded domain in $\mathbb{R}^n$, $A\in C^2(\bar \Omega\times \mathbb{R}\times \mathbb{R}^n)$ strictly regular in $\bar\Omega$, $B > 0, \in C^2(\bar \Omega\times \mathbb{R}\times \mathbb{R}^n)$,
$G\in C^{2,1}(\partial\Omega\times \mathbb{R}\times\mathbb{R}^n)$ is concave with respect to $p$ and uniformly oblique in the sense of \eqref{G uniformly oblique}. Assume that $\underline u$ and $\bar  u$, $\in C^{1,1}(\Omega)\cap C^1(\bar \Omega)$ are respectively an admissible subsolution of \eqref{1.1}-\eqref{1.2} and supersolution of \eqref{1.1}-\eqref{1.2} with
$B$ replaced by $B -\delta$ for some positive constant $\delta$ and $\Omega$ uniformly $(A,G)$-convex with respect to the interval $\mathcal I = [\underline u, \bar u]$. Assume also that $A$, $B$ and $-G$ are non-decreasing in $z$, with at least one of them strictly increasing,  $A$ satisfies the quadratic growth conditions \eqref{quadratic structure of A} and $B$ is independent of $p$. Then there exists an admissible solution $u \in C^{1,1}(\bar \Omega)$ of the boundary value problem \eqref{1.1}-\eqref{1.2}.
\end{Corollary}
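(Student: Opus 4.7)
The plan is to apply Theorem \ref{Th4.2} to a sequence of smooth approximating operators converging to $\mathfrak{m}_1$ from above and then pass to the limit, broadly in the spirit of Corollary \ref{Cor4.1}. The direct elliptic regularisation of Corollary \ref{Cor4.1} is inadequate here because, for $F=\mathfrak{m}_1$, it introduces a term $\epsilon|r|^2$ into $\mathcal{E}_2$ that destroys the uniformity of F6, needed for the nonlinear oblique case of Theorem \ref{Th4.2}. Instead, I would use the normalised negative-power means
\begin{equation*}
\tilde F_\alpha(r) = \Bigl(\tfrac{1}{n}\sum_{i=1}^n \lambda_i(r)^{-\alpha}\Bigr)^{-1/\alpha},\qquad \alpha\ge 2,
\end{equation*}
which are rescalings of the operators $F_{1,-\alpha}$ from \eqref{F - alpha}. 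On $K^+=\Gamma_n$, each $\tilde F_\alpha$ is $C^\infty$, orthogonally invariant, concave, positively homogeneous of degree one, and satisfies $\mathfrak{m}_1\le \tilde F_\alpha$, with $\tilde F_\alpha\to\mathfrak{m}_1$ pointwise on $K^+$ as $\alpha\to\infty$ by the power-mean inequality.

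Next I would verify that $\tilde F_\alpha$ satisfies F1-F5 (with $a_0=0$) and F6 in $K^+$, with constants uniform in $\alpha\ge 2$. Homogeneity and concavity yield F5 via $\mathscr{T}\ge \tilde F_\alpha(I)=1$. For F6, a direct computation in eigenvalue coordinates at a diagonal matrix gives
\begin{equation*}
\frac{\mathcal{E}_2}{\mathscr{T}}=\frac{\sum_i \lambda_i^{\,1-\alpha}}{\sum_i \lambda_i^{\,-1-\alpha}}\le \frac{n\lambda_{\min}^{\,1-\alpha}}{\lambda_{\min}^{\,-1-\alpha}}=n\lambda_{\min}^2\le nb^2
\end{equation*}
whenever $\tilde F_\alpha(r)\le b$ (since $\tilde F_\alpha\ge \lambda_{\min}$), uniformly in $\alpha\ge 2$ and in $|r|$, which yields the required $\mathcal{E}_2\le o(|r|)\mathscr{T}$ behaviour as $|r|\to\infty$. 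I would also check that $\underline u$ remains an admissible subsolution of the regularised problem, which is immediate from $\tilde F_\alpha\ge \mathfrak{m}_1$, and that $\bar u$ remains a supersolution for $\alpha$ sufficiently large: since $\bar u\in C^{1,1}(\bar\Omega)$ confines $M[\bar u]$ to a bounded region, dominated convergence gives $\tilde F_\alpha[\bar u]\le \mathfrak{m}_1[\bar u]+\tfrac{\delta}{2}\le B-\tfrac{\delta}{2}$ once $\alpha$ is large enough. Applying Theorem \ref{Th4.2} with $\mathcal F=\tilde F_\alpha$, after a routine smoothing of $\underline u$ and $\bar u$ if needed to meet the $C^2$ regularity there (the strict inequalities are preserved by small perturbations), then yields a unique admissible solution $u^\alpha\in C^{3,\alpha'}(\bar\Omega)$.

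Finally, the a priori estimates of Theorems \ref{Th1.1}, \ref{Th1.2} and Remark \ref{Remark3.2} (for the uniformly concave $\mathcal G$) depend only on the uniform F-constants established above and on the fixed data, so $|u^\alpha|_{2;\Omega}\le C$ uniformly in $\alpha$. A subsequence converges in $C^{1,\gamma}(\bar\Omega)$ for every $\gamma<1$ to some $u\in C^{1,1}(\bar\Omega)$, satisfying $\underline u\le u\le \bar u$, and the stability of viscosity solutions \cite{CIL1992}, combined with the local uniform convergence $\tilde F_\alpha\to\mathfrak{m}_1$ on compact subsets of $K^+$, identifies $u$ as an admissible solution of \eqref{1.1}-\eqref{1.2}. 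The principal obstacle is the uniform preservation of F6 through the approximation; as already noted, this rules out the naive elliptic regularisation and motivates the choice of the harmonic-mean-type operators $\tilde F_\alpha$, whose concentration at the minimum eigenvalue is precisely what keeps $\mathcal{E}_2/\mathscr{T}$ uniformly bounded as $\alpha\to\infty$.
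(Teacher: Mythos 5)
Your proposal is correct and coincides with the alternative route the paper flags in the closing remark of Section \ref{subsection4.3}, namely approximating $\mathfrak{m}_k$ by $F_{k,-\alpha}$ for large $\alpha$ (applied here with $k=1$ and, for the nonlinear $\mathcal G$, Theorem \ref{Th4.2} rather than Theorem \ref{Th4.1}), whereas the paper's stated primary proof simply appeals to Corollary \ref{Cor4.1} and its elliptic regularisation $F^\epsilon(r)=F(r+\epsilon\,\mathrm{trace}(r)I)$. Your reason for preferring the alternative is sound: for $F=\mathfrak{m}_1$ one computes $\mathcal{E}_2^\epsilon=\lambda_{\min}(r)^2+\epsilon|r|^2$ and $\mathscr{T}^\epsilon=1+n\epsilon$, and even on $K^+$ the constraint $F^\epsilon(r)\le b$ only yields $\epsilon|r|^2\le b|r|$, so at best $\mathcal{E}_2^\epsilon = O(|r|)\mathscr{T}^\epsilon$ with a constant that does not tend to zero with $|r|$; on the full cone $\Gamma^\epsilon$, where $\lambda_{\min}$ can be negative and $|r|$ unbounded, $\mathcal{E}_2^\epsilon/(\mathscr{T}^\epsilon|r|)$ is even unbounded for fixed $\epsilon$. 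Thus the blanket assertion in Section \ref{subsection4.3} that $F^\epsilon$ inherits F2--F7 with $\epsilon$-independent constants is not safe for F6 when $\mathscr{T}$ is itself bounded, exactly as you say. Your $\tilde F_\alpha$ cures this cleanly: the computation $\mathcal{E}_2/\mathscr{T}=\sum_i\lambda_i^{1-\alpha}/\sum_i\lambda_i^{-1-\alpha}\le n\lambda_{\min}^2\le n\bigl[\tilde F_\alpha(r)\bigr]^2\le nb^2$ gives F6 with a constant modulus on $K^+$, uniformly in $\alpha\ge 2$; the normalisation $\tilde F_\alpha\ge\mathfrak{m}_1$ makes subsolution transfer immediate; and the explicit bound $\tilde F_\alpha-\mathfrak{m}_1\le(n^{1/\alpha}-1)\,\mathfrak{m}_1$ is a cleaner way than the invocation of ``dominated convergence'' to preserve the supersolution inequality once $\alpha$ is large compared with $\delta^{-1}\sup\mathfrak{m}_1[\bar u]$. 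The passage to the limit matches the compactness and viscosity-stability step in the paper's proof of Corollary \ref{Cor4.1}, and the mismatch between $C^{1,1}$ sub/supersolutions and the nominal $C^2$ hypothesis of Theorem \ref{Th4.2} is an issue the paper itself glosses over in Corollary \ref{Cor4.1}; as you note it is harmless because the sub/supersolutions serve only to furnish the comparison bound \eqref{solution bound} and a starting point for the continuity method.
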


We remark that we may also prove Corollaries \ref{Cor4.2} and \ref{Cor4.3} directly from Theorem \ref{Th4.1} by approximating $\mathfrak{m}_k$ by $F_{k,-\alpha}$ for large $\alpha$. Also the solutions in Corollaries \ref{Cor4.1}, \ref{Cor4.2} and \ref{Cor4.3} will be unique if either $A$ or $B$ are strictly increasing and more generally under appropriate barrier conditions, as considered in Section 2 of \cite{JT-oblique-II}.

%%%%%%%%%%%%%%%%%%%%%%%%%%%%%%%%%%%%%%%%%%%%%%%%%%%%%%%%%%%%%%%%%%%%%%%%%%%%%%%%%%%%%%%%%%%%%%%%%%%%%%%%%%%

\subsection{Final remarks}\label{subsection4.4}

The oblique boundary value problem \eqref{1.1}-\eqref{1.2} for augmented Hessian equations is natural in the classical theory of fully nonlinear elliptic equations. In this paper and its sequel \cite{JT-oblique-II}, we have treated this problem in a very general setting. Through  {\it a priori} estimates, we have established the classical existence theorems under appropriate domain convexity hypotheses for both (i) strictly regular $A$ and semilinear or concave  $\mathcal{G}$, and (ii) regular $A$ and uniformly concave $\mathcal{G}$. Our emphasis in this paper is the case (i), since the case (ii) is already known in the context of the second boundary value problems of Monge-Amp\`ere equations \cite{Urbas1998, Urbas2001} and optimal transportation equations \cite{TruWang2009, vonNessi2010}. In case (i), the boundary conditions can be any oblique conditions, including the special case of the Neumann  problem, while the operators embrace a large class including the Monge-Amp\`ere operator, $k$-Hessian operators and their quotients, as well as  degenerate  and non-orthogonally invariant operators.

In part II \cite{JT-oblique-II} we treat the case of regular matrices $A$ which includes the basic Hessian equation case, where $A = 0$ or more generally where $A$ is independent of the gradient variables. A fundamental tool here is the extension of our barrier constructions for Monge-Amp\`ere operators in \cite{JTY2013, JT2014} to general operators; (see Remarks in Section 2 of \cite{JTX2015}). In general as indicated by the Pogorelov example, \cite{Wang1992, Urbas1995}, we cannot expect second derivative estimates for arbitrary linear oblique boundary  conditions  and moreover the strict regularity of $A$ is critical for our second derivative estimates in Section \ref{Section 2}. We remark though that our methods in this paper, as further developed in \cite{JT-oblique-II}, also show that the strict regularity can be replaced by not so natural, strong monotonicity conditions with respect to the solution variable on either the matrix $A$ or the boundary function $\varphi$, that is either $A_z$ or $\varphi_z$ is sufficiently large, and the latter would include the case when $A = 0$, in agreement with the Monge-Amp\`ere case in \cite{Urbas1987, Wang1992, Urbas1995}.   For Monge-Amp\`ere type operators, we are able to derive the second derivative bound for semilinear Neumann boundary value problem when $A$ is just regular, under additional assumption of the existence of an admissible supersolution $\bar u$, satisfying $\det (M[\bar u]) \le B(\cdot, \bar u, D\bar u)$ in $\Omega$ and $D_\nu \bar u= \varphi(\cdot,\bar u)$ on $\partial\Omega$; (see Jiang et al. \cite{JTX2015}). This is an extension of the fundamental result in \cite{LTU1986} for the standard Monge-Amp\` ere operator, ( although the supersolution hypothesis is not needed in \cite{LTU1986} and more generally when $D_{px}A = 0$ \cite{JTX2015}).  For the semilinear oblique problem for standard $k$-Hessian equations, the known results due to Trudinger \cite{Tru1987} and Urbas \cite{Urbas1995}, where the second derivative estimates for Neumann problem in balls, and for oblique problem in general domains in dimension two respectively were studied. Recently the Neumann problem for the standard $k$-Hessian equation has been studied in uniformly convex domains in \cite{MQ}. However, it would be reasonable to expect  there are  corresponding second derivative estimates for admissible solutions of the Neumann problem for $k$-Hessian equations in uniformly $(k-1)$-convex domains. Also, the second derivative estimates for admissible solutions of the Neumann problem of the augmented $k$-Hessian equations with only regular $A$ in uniformly $(\Gamma_k, A, G)$-convex domains is still an open problem.

In Section \ref{Section 3}, we have established the gradient estimate for augmented Hessian equations in the cones $\Gamma_k$ when $k>n/2$ under  structure conditions for $A$ and $B$ corresponding to the natural conditions of Ladyzhenskaya and Ural'tseva for quasilinear elliptic equations \cite{LU1968, GTbook}. The gradient estimate under natural conditions is also known for $k=1$, in \cite{GTbook}. Therefore, it would be interesting to prove gradient estimates,  (interior and global), for both oblique and Dirichlet boundary value problems under natural conditions for the remaining cases for operators in the cones $\Gamma_k$ when $2\le k\le n/2$, and in particular for the basic Hessian operators $F_k$ when $2\le k\le n/2$, which also enjoy  $L^p$ gradient estimates for $p < nk/(n-k)$, \cite{TruWang1999}.  In \cite {JT-new}, we apply our gradient estimates here and general barrier constructions in Section 2 of \cite{JT-oblique-II} to study the classical Dirichlet problem for general augmented Hessian equations with only regular matrix functions $A$. Here as well as our conditions on $F$ in case (ii) of Theorem \ref{Th1.1}, for global second derivative estimates we also need to assume orthogonal invariance and the existence of an appropriate subsolution, as in our previous papers \cite{JTY2013, JTY2014}. Our barrier constructions in \cite{JT-oblique-II} also permit some relaxation of the conditions on $F$ in the regular case, as already indicated in Remark 3.2.

As pointed out in Remark \ref{Remark1.2}, our domain convexity conditions require some relationship between the matrix $A$ and the boundary operator $\mathcal G$. If we drop these from our hypotheses, we can still infer the existence of classical solutions of the equation \eqref{1.1} which are globally Lipschitz continuous and satisfy the boundary condition \eqref{1.2} in a weak viscosity sense \cite{CIL1992} so that our domain convexity conditions should become conditions for boundary regularity. This situation will be further amplified in a future paper, along with examination of the sharpness of our convexity conditions. A preliminary result here for the conformal geometry application is given in \cite{LiNguyen2012}.

\vspace{3mm}

%\newpage

\end{document}